\let\mathcal\mathscr
\def\mainmatter{\def\baselinestretch{1.1}\normalfont}
\def\backmatter{\def\baselinestretch{1}\normalfont}
\newcounter{step}
\newcommand{\step}{\par\refstepcounter{step}\textup{(\thestep)}}
\renewcommand{\tocsubsection}[3]{%
  \hspace*{1.5pc}\indentlabel{\@ifnotempty{#2}{\ignorespaces#1 #2.\quad}}#3}
\def\Subsection{\@startsection{subsection}{2}%
  \z@{.7\linespacing\@plus.7\linespacing}{.3\linespacing}
  {\normalfont\bfseries}}
\newenvironment{enumeratea}
{\bgroup\begin{enumerate}}
{\end{enumerate}\egroup}
\numberwithin{equation}{section}
\theoremstyle{plain}
\newtheorem{thm}[equation]{Theorem}
\newtheorem{lemma}[equation]{Lemma}
\newtheorem{prop}[equation]{Proposition}
\newtheorem{cor}[equation]{Corollary}
\theoremstyle{definition}
\newtheorem{remark}[equation]{Remark}
\newtheorem{defi}[equation]{Definition}
\newtheorem{example}[equation]{Example}
\newtheorem{assumption}[equation]{Assumption}
\newtheorem{caveat}[equation]{Caveat}
\newtheorem{convention}[equation]{Convention}
\renewcommand\to{\mathchoice{\longrightarrow}{\rightarrow}{\rightarrow}{\rightarrow}}
\newcommand\mto{\mathchoice{\longmapsto}{\mapsto}{\mapsto}{\mapsto}}
\newcommand\hto{\mathrel{\lhook\joinrel\to}}
\newcommand\To[1]{\mathchoice{\xrightarrow{
\kern4pt#1\kern3pt}}{\stackrel{#1}{\longrightarrow}}{}{}}
\def\Hto#1{\mathrel{\lhook\joinrel\To{#1}}}
\newcommand{\isom}{\stackrel{\sim}{\longrightarrow}}
\let\ra\rightarrow
\let\oldbigoplus\bigoplus
\renewcommand{\bigoplus}{\mathop{\textstyle\oldbigoplus}\displaylimits}
\newcommand\cA{\mathcal{A}}
\newcommand\cC{\mathcal{C}}
\newcommand\cD{\mathcal{D}}
\newcommand\cE{\mathcal{E}}
\newcommand\cF{\mathcal{F}}
\newcommand\cG{\mathcal{G}}
\newcommand\cH{\mathcal{H}}
\newcommand\cO{\mathcal{O}}
\newcommand\cT{\mathcal{T}}
\newcommand\cU{\mathcal{U}}
\newcommand\cV{\mathcal{V}}
\newcommand{\cHom}{\cH\!om}
\newcommand{\RcHom}{\bR\!\cHom}
\DeclareMathAlphabet{\eucal}{U}{eus}{m}{n}
\newcommand{\ccC}{\eucal{C}}
\newcommand{\ccP}{\eucal{P}}
\newcommand{\Db}{\mathfrak{Db}}
\newcommand{\Cb}{\mathfrak{C}}
\newcommand\bD{\boldsymbol{D}}
\newcommand\bH{\boldsymbol{H}}
\newcommand\bR{\boldsymbol{R}}
\newcommand{\epsilonb}{{\boldsymbol\varepsilon}}
\newcommand{\PP}{\mathbb{P}}
\newcommand{\QQ}{\mathbf{Q}}
\newcommand{\ZZ}{\mathbf{Z}}
\newcommand{\RR}{\mathbf{R}}
\newcommand{\CC}{\mathbf{C}}
\newcommand{\NN}{\mathbf{N}}
\newcommand{\DD}{\mathbf{D}}
\newcommand{\alg}{\mathrm{alg}}
\newcommand{\an}{\mathrm{an}}
\newcommand{\dR}{{\mathrm{dR}}}
\newcommand{\Betti}{\mathrm{B}}
\newcommand{\BM}{{\scriptscriptstyle\mathrm{BM}}}
\newcommand{\coH}{\mathrm{H}}
\let\per\comp
\newcommand{\irr}{{\mathrm{irr}}}
\newcommand{\nb}{\mathrm{nb}}
\newcommand{\reg}{\mathrm{reg}}
\newcommand{\rb}{{\mathrm{b}}}
\newcommand{\rc}{{\mathrm{c}}}
\newcommand{\rd}{{\mathrm{d}}}\let\de\rd
\newcommand{\rmid}{{\mathrm{mid}}}
\newcommand{\rmod}{\mathrm{mod}}
\newcommand{\rrd}{\mathrm{rd}}
\newcommand{\sm}{\mathrm{sm}}
\newcommand{\catD}{\mathsf{D}}
\newcommand{\sfi}{\mathsf{i}}
\newcommand{\pii}{\pi\sfi}
\newcommand{\catPer}{\mathsf{Per}}
\newcommand\Bpairing{\mathsf B}
\newcommand\Ppairing{\mathsf P}
\newcommand\Qpairing{\mathsf Q}
\newcommand\Rpairing{\mathsf R}
\newcommand\Spairing{\mathsf S}
\let\Ipairing\Bpairing
\let\DRpairing\Qpairing
\let\kk\bmk
\newcommand{\bK}{{\boldsymbol{K}}}
\DeclareMathOperator{\can}{can}
\DeclareMathOperator{\DR}{DR}
\DeclareMathOperator{\gr}{gr}
\DeclareMathOperator{\Hom}{Hom}
\DeclareMathOperator{\id}{Id}
\DeclareMathOperator{\image}{im}
\DeclareMathOperator{\res}{res}
\DeclareMathOperator{\Res}{Res}
\DeclareMathOperator{\tr}{tr}
\def\loccit{loc.\kern3pt cit.{}\xspace}
\def\cf{see\kern.3em}
\def\eg{e.g.\kern.3em}
\def\ie{i.e.,\ }
\def\resp{\text{resp.}\kern.3em}
\let\moins\smallsetminus
\let\leq\leqslant
\let\geq\geqslant
\let\wh\widehat
\let\wt\widetilde
\let\epsilon\varepsilon
\let\ov\overline
\let\emptyset\varnothing
\let\oldvee\vee
\renewcommand{\vee}{{\scriptscriptstyle\oldvee}}
\let\Div D
\newcommand{\bbullet}{{\scriptscriptstyle\bullet}}
\newcommand{\cbbullet}{{\raisebox{1pt}{$\bbullet$}}}
\newcommand{\wti}{{\wt\imath}}
\newcommand{\wtj}{\wt\jmath}
\newcommand\bM{{\partial M}}
\newcommand{\ieme}{\textsuperscript{e}}
\newcommand{\lpr}{(\!(}
\newcommand{\rpr}{)\!)}
\newcommand{\lcr}{[\![}
\newcommand{\rcr}{]\!]}
\newcommand{\triup}{{\scriptscriptstyle\triangle}}
\newcommand{\smallsquare}{{\scriptscriptstyle\square}}
\newcommand{\sep}{\mkern1mu|\mkern1mu}
\newcommand{\gen}[1]{\left\langle{#1}\right\rangle}
\providecommand{\eprint}[1]{\href{http://arxiv.org/abs/#1}{\texttt{arXiv\string:\allowbreak#1}}}
\begin{document}
\title{Quadratic relations between periods of connections}

\author[J. Fresán]{Javier Fresán}
\address[J. Fresán]{CMLS, École polytechnique, Institut Polytechnique de Paris, 91128 Palaiseau cedex, France}
\email{javier.fresan@polytechnique.edu}
\urladdr{http://javier.fresan.perso.math.cnrs.fr}

\author[C.~Sabbah]{Claude Sabbah}
\address[C.~Sabbah]{CMLS, CNRS, École polytechnique, Institut Polytechnique de Paris, 91128 Palaiseau cedex, France}
\email{Claude.Sabbah@polytechnique.edu}
\urladdr{http://www.math.polytechnique.fr/perso/sabbah}

\author[J.-D. Yu]{Jeng-Daw Yu}
\address[J.-D. Yu]{Department of Mathematics, National Taiwan University,
	Taipei 10617, Taiwan}
\email{jdyu@ntu.edu.tw}
\urladdr{http://homepage.ntu.edu.tw/~jdyu/}

\thanks{This research was partly supported by the PICS project TWN 8094 from CNRS. The research of J.F. was also partly supported by the grant ANR-18-CE40-0017 of Agence Nationale de la Recherche.}

\begin{abstract}
We prove the existence of quadratic relations between periods of meromorphic flat bundles on complex manifolds with poles along a divisor with normal crossings under the assumption of ``goodness''. In dimension one, for which goodness is always satisfied, we provide methods to compute the various pairings involved. In an appendix, we recall some classical results needed for the proofs.
\end{abstract}

\keywords{Period pairing, Poincaré-Verdier duality, quadratic relation, rapid-decay homology and cohomology, moderate homology and cohomology, real blow-up, good meromorphic flat bundle}

\subjclass[2010]{32G20, 34M35}

\maketitle
\thispagestyle{empty}
\vspace*{-2\baselineskip}
\enlargethispage{2\baselineskip}%
\tableofcontents
\mainmatter

\section{Introduction}
Let $U$ be an affine open set of the complex projective line.
For an algebraic vector bundle~$V$ on $U$ endowed with an algebraic connection $\nabla$ and a non-degenerate pairing compatible with $\nabla$, there exists a natural pairing $\Ppairing$, called the \emph{period pairing}, between the first de~Rham cohomology space $\coH^1_\dR(U,V)$ and the first homology space $\coH_1(U^\an,V^\nabla)$---also known as the twisted homology space---on the associated analytic space $U^\an$
with coefficients in the local system~$V^\nabla$ of horizontal sections of $(V,\nabla)$. This pairing is obtained by integrating $1$-forms with values in~$V$ against $1$-cycles twisted by $V^\nabla$, by means of the given pairing $V\otimes V\to\cO(U)$. There also exists a similar pairing $\Ppairing^\rc$ between the de~Rham cohomology with compact support $\coH^1_{\dR,\rc}(U,V)$ and the Borel-Moore twisted homology $\coH_1^\BM(U^\an,V^\nabla)$. The latter pairing is less used than the former because the expression of de~Rham classes with compact support is not as simple as that of de~Rham classes with no support condition. Moreover,
since Borel-Moore cycles may have boundary in $\PP^1\moins U$, this pairing needs a regularization procedure to be expressed.

In the case when $\nabla$ has regular singularities at infinity, both pairings $\Ppairing$ and $\Ppairing^\rc$ are known to be non-degenerate, as well as the de~Rham duality pairing
\[
\Spairing:\coH^1_{\dR,\rc}(U,V)\otimes \coH^1_\dR(U,V)\to\CC
\]
and the intersection pairing
\[
\Bpairing:\coH_1^\BM(U^\an,V^\nabla)\otimes \coH_1(U^\an,V^\nabla)\to\CC.
\]
Poincaré duality, regarded as an isomorphism between cohomology and homology, implies a relation between the matrices of these pairings:
\[
(2\pii)\Bpairing=\Ppairing\cdot\Spairing^{-1}\cdot{}^t\Ppairing^\rc.
\]
In many interesting examples, both cohomologies $\coH^1_\dR(U,V)$ and $\coH^1_{\dR,\rc}(U,V)$
\resp homologies $\coH_1(U^\an,V^\nabla)$ and $\coH_1^\BM(U^\an,V^\nabla)$ coincide,
and there is no distinction between $\Ppairing$ and $\Ppairing^\rc$. This relation is then regarded as a family of polynomial relations of degree two between the entries of the period matrix $\Ppairing$. If $V$ and~$V^\nabla$
are defined respectively over subfields~$\bK$ and~$\kk$ of $\CC$, the pairings $\Bpairing$ and $\Spairing$ take values in these fields and such relations take the form of an equality between a quadratic expression of the periods with coefficients in~$\bK$ and an element in $(2\pii)\kk$. In a series of papers, Matsumoto et~al.\ \hbox{\cite{C-M95, K-M95, K-M97, K-M-M99}} have developed this technique and have used it to produce quadratic relations between classical special functions, \cf also \cite{Goto15a,Goto15b}. The aforementioned articles show many interesting examples where these quadratic relations can be effectively computed. This procedure can be extended in higher dimension, giving rise to examples related to hyperplane arrangements. We refer to the book \cite{A-K11} where many examples are treated with details.

If one relaxes the condition of regular singularities for the connection $\nabla$, the period pairings $\Ppairing,\Ppairing^\rc$ may be degenerate because they pair spaces of possible different dimensions. In \cite{M-M-T00}, the authors have extended the quadratic relations to connections, with irregular singularities at some points of $\PP^1\moins U$, obtained by adding a rational $1$-form to the trivial connection $\rd$ on the trivial bundle $V=\cO_U$. In order to retain a perfect period pairing,
the twisted cycles have to be replaced with twisted cycles with rapid decay, which happen to coincide with those introduced by Bloch-Esnault \cite{B-E04b}. These are supported by chains with boundary in $\PP^1\moins U$ that reach these boundary points in suitable directions so that the corresponding period integrals converge. It is then convenient to work with the topological space obtained by completing $U^\an$ with the space of directions at each point of $\PP^1\moins U$, that is, the real oriented blow-up space of $\PP^1$ at each point of $\PP^1\moins U$.
Correspondingly, the de~Rham cohomologies with and without support conditions are computed by means of the $C^\infty$ de~Rham cohomology with rapid decay and moderate growth respectively on this blow-up space, in a way very similar to that developed in \cite{M-M-T00} (\cf also~\cite{Matsumoto98}).\enlargethispage{\baselineskip}%

The main purpose of this article is to establish the existence of quadratic relations that take into account rapid decay and moderate growth properties on the real oriented blow-up space, and to give some methods to compute them for any differential equation on a Riemann surface, without any assumption on the kind of singularities it may have. We then obtain quadratic relations with enough generality in order to analyze, say, quadratic relations between Bessel moments, which are periods of symmetric powers of the Bessel differential equation \cite{F-S-Y20b}. Treating this example was the main motivation for developing the theory in this direction, and the reader will find in \loccit\ a much detailed example of the various notions explained in the present article.

As for the case with regular singularities, these quadratic relations also exist in higher dimensions and may be interesting when considering period pairings for irregular GKZ systems or hyperplane arrangements twisted by exponentials of rational functions for example. The general rule is to replace forms with ``no support condition'' (\resp ``Borel-Moore'' cycles) by forms (\resp cycles) with ``moderate growth'', and similarly ``compact support'' by ``rapid decay''.

\subsubsection*{Contents and organization of the article}
In Section \ref{sec:pairingsflatbundles}, we consider the case of a differentiable manifold with corners and a differentiable vector bundle on it endowed with an integrable connection having poles along the boundary. No analyticity property is needed, but an assumption on the behaviour near the boundary of the de~Rham complexes of differential forms seems to be essential (Assump\-tions \ref{hyp:hyp}). Corollaries~\ref{cor:quadraticreldR} and \ref{cor:quadraticrelB} provide quadratic relations in a very general form. We emphasize \emph{middle quadratic relations} since they tend to be the most non-trivial part of quadratic relations. We revisit the period pairings introduced by Bloch and Esnault \cite{B-E04b} in the form considered by Hien \cite{Hien07,Hien09, Hien10}. Applications to period pairings similar to those of \loccit\ in any dimension are given in \hbox{Section}~\ref{subsec:comparisonperiods}. The~main idea, taken from \cite{Hien07,Hien09, Hien10}, is to define local period \hbox{pairings} in the sheaf-theoretical sense, prove that they are perfect, and get for free the perfectness of the global period pairings by applying the Poincaré\nobreakdash-Verdier duality theorem. In~that case, the manifold with corners is nothing but the real oriented blow-up of a complex manifold along a divisor with normal crossings. Provided Assumptions~\ref{hyp:hyp} are satisfied, these ideas apply to the general framework for proving quadratic relations between periods.

Section \ref{sec:dimone} focuses on the case of meromorphic connections on Riemann surfaces. A~formula ``à~la~\v Cech'' for computing the de~Rham pairing is provided by Theorem \ref{th:duality}. In~the case of rank-one bundles with meromorphic connection, this result already appears in Deligne's notes~\cite{Deligne84b}. We also give a formula for computing period matrices (Proposition \ref{prop:calculP}) that goes back, for connections with regular singularities, at least to \cite[Rem.\,2.16]{D-M86}. Quadratic relations in the sense of Matsumoto et al.\ are obtained in \eqref{eq:quadraticone}. The reader will find these notions illustrated in the example treated in \cite{F-S-Y20b}.

The appendix recalls with detailed proofs\footnote{The proofs are omitted in the published version.} classical results which are taken from the Séminaire Cartan \cite{Cartan48,Cartan50}, the books of de~Rham \cite{deRham73,deRham84}, of Malgrange \cite{Malgrange66}, and of Kashiwara and Schapira \cite{K-S90}. Let us emphasize that a similar approach has already been used by Kita and Yoshida~\cite{K-Y94} in order to define and compute the Betti intersection pairing for ordinary twisted homology (\cf also \cite[Chap.\,2]{A-K11}). We~improve their results by extending them to connections of any rank and possibly with irregular singularities.

Needless to say, the results of this article are not essentially new, but we have tried to give them with enough generality, rigor, and details so that they can be used in various situations without reproving the intermediate steps.

\section{Pairings for bundles with flat connection and quadratic relations}\label{sec:pairingsflatbundles}

\subsection{Setting and notation}\label{subsec:settings}
We consider a $C^\infty$ manifold $M$ of real dimension $m$ with corners, that we assume to be connected and orientable. The boundary of~$M$ is denoted by $\bM$, and the inclusion of the interior $M^\circ=M\moins\bM$ by $j \colon M^\circ\hto M$. In the neighbourhood of each point of~$\bM$, the pair $(M,\bM)$ is diffeomorphic to $(\RR_+{}^p,\partial(\RR_+{}^p))\times\RR^{m-p}$ for some integer $p\in[1,m]$. The sheaf $\cC^\infty_M$ of $C^\infty$ functions (\resp $\Db_M$ of distributions) on~$M$ is locally the sheaf-theoretic restriction to $\RR_+{}^p\times\RR^{m-p}$ of the sheaf of $C^\infty$ functions (\resp distributions) on $\RR^m$. We~will also consider the sheaf $\cC_M^{\infty}(*)$ of~$C^\infty$ functions on $M^\circ$ having at most poles along~$\bM$. The de~Rham complex~$(\cE^\cbbullet_M,\rd)$ of $C^\infty$ differential forms on~$M$ is a resolution of the constant sheaf $\CC_M$. The complex of currents $(\Cb_{M,\bbullet},\partial)$, deno\-ted homologically, can be regarded cohomologically as $(\Db_M^{m-\cbbullet},\rd)$. Recall (\cf Appendix~\ref{app:currents}) that a current of homological index~$q$ is a linear form on test differential forms of degree $q$. Then the inclusion of complexes $(\cE^{m-\cbbullet}_M,\rd)\hto(\Db_M^{m-\cbbullet},\rd)$ is a quasi-isomorphism. In other words, $(\Db_M^{m-\cbbullet},\rd)$ is a resolution of~$\CC_M[m]$. We refer the reader to Appendix~\ref{app:currents} for the notation and results concerning currents with moderate growth. The \hbox{results} in this section are a mere adaptation of those contained in de~Rham's book \cite{deRham73,deRham84} (\cf Chapter~IV in~\cite{deRham84}). In order to simplify the discussion, and since we are only interested in this case, we assume throughout this section that $M$ is moreover \emph{compact}.

\subsection{De Rham formalism for quadratic relations}\label{subsec:dRquadratic}

\subsubsection*{Setting and the main assumption}
We keep the assumptions and notations as in Section \ref{subsec:settings}. Let $(\cV,\nabla)$ be a locally free $\cC_M^{\infty}(*)$-module of finite rank on~$M$ endowed with a flat connection~$\nabla$ and a \emph{flat non-degenerate} pairing
\[
\gen{\cbbullet,\cbbullet}:\cV\otimes\cV\to\cC_M^{\infty}(*),
\]
\ie compatible with the connections. It induces a self-duality
$\lambda: \cV \isom \cV^\vee$
which factorizes $\gen{\cbbullet,\cbbullet}$ as the composition
\[ \cV\otimes\cV \To{1\otimes\lambda} \cV\otimes\cV^\vee
\To{(\cbbullet\sep\cbbullet)} \cC_M^{\infty}(*), \]
where $(\cbbullet\sep\cbbullet)$ denotes the natural duality pairing.
We can define the de Rham complexes of $(\cV,\nabla)$ with respectively $\cC_M^{\infty,\rrd}$ and $\cC_M^{\infty,\rmod}$ coefficients (\cf Definition \ref{def:rdmod}). Since $\nabla$ has only poles as possible singularities along~$\bM$, the complexes
\[
\DR^{\rrd}(\cV,\nabla)=(\cE_M^{\rrd,\cbbullet}\otimes\cV,\nabla)\quad\text{and}\quad\DR^{\rmod}(\cV,\nabla)=(\cE_M^{\rmod,\cbbullet}\otimes\cV,\nabla)
\]
are well-defined.
The de~Rham cohomologies with rapid decay $\coH_{\dR,\rrd}^r(M,\cV)$ and the de~Rham cohomologies with moderate growth $\coH_{\dR,\rmod}^r(M,\cV)$ are defined to be the hypercohomologies of these respective complexes, which are equal to the cohomologies of the global sections. We~make the following assumptions.

\pagebreak[2]
\begin{assumption}\mbox{}\label{hyp:hyp}
\begin{enumerate}
\item\label{hyp:1}
The de~Rham complexes $\DR^{\rrd}(\cV,\nabla)$ and $\DR^{\rmod}(\cV,\nabla)$ have cohomology sheaves concentrated in degree $0$, denoted respectively by $\cV^\rrd$ and~$\cV^\rmod$.

\item\label{hyp:2}
The natural pairings
defined by means of $\gen{\cbbullet,\cbbullet}:\cV\otimes\cV\to\cC^\infty_M(*)$
and of the wedge product:
\begin{align*}
\DRpairing_{\rrd,\rmod}:\DR^{\rrd}(\cV,\nabla)\otimes\DR^{\rmod}(\cV,\nabla)\to(\cE^{\rrd,\cbbullet}_M,\rd),\\
\DRpairing_{\rmod,\rrd}:\DR^{\rmod}(\cV,\nabla)\otimes\DR^{\rrd}(\cV,\nabla)\to(\cE^{\rrd,\cbbullet}_M,\rd)
\end{align*}
are perfect (\cf Example \ref{ex:dualizing}), \ie the pairings induced on the $\cH^0$ are perfect:
\begin{align*}
\cV^\rrd\otimes\cV^\rmod&\to j_!\CC_{M^\circ},\\
\cV^\rmod\otimes\cV^\rrd&\to j_!\CC_{M^\circ}.
\end{align*}
\end{enumerate}
\end{assumption}

One can drop the assumption on the existence of a non-degenerate pairing $\gen{\cbbullet,\cbbullet}$ and use $(\cbbullet\sep\cbbullet)$ instead. One can easily adapt the results of this section by distinguishing between $\cV$ and $\cV^\vee$.

\subsubsection*{The de Rham pairings}
Verdier duality implies (Proposition~\ref{prop:accperfect}) that the natural pairings
\begin{equation}\label{eq:rdmodpairings}
\begin{aligned}
\coH^{m-r}(M,\cV^\rrd)\otimes \coH^r(M,\cV^\rmod)&\to \coH^m_\rc(M^\circ,\CC)\simeq\CC,\\
\coH^{m-r}(M,\cV^\rmod)\otimes \coH^r(M,\cV^\rrd)&\to \coH^m_\rc(M^\circ,\CC)\simeq\CC
\end{aligned}
\end{equation}
are non-degenerate. We will interpret these pairings in terms of the de~Rham coho\-mologies $\coH_{\dR,\rrd}^{m-r}(M,\cV)$ and $\coH_{\dR,\rmod}^r(M,\cV)$ to which these cohomologies can respectively be identified by means of Assumption \ref{hyp:hyp}\eqref{hyp:1}. We~note that the diagram
\begin{equation}\label{eq:Srdmod}
\begin{array}{c}
\xymatrix@C=1.5cm{
\DR^{\rrd}(\cV,\nabla)\otimes\DR^{\rmod}(\cV,\nabla)\ar@<-8ex>[d]\ar[r]^-{\DRpairing_{\rrd,\rmod}}&(\cE^{\rrd,\cbbullet}_M,\rd)\ar@{=}[d]\\
\DR^{\rmod}(\cV,\nabla)\otimes\DR^{\rrd}(\cV,\nabla)\ar@<-8ex>[u]\ar[r]^-{\DRpairing_{\rmod,\rrd}}&(\cE^{\rrd,\cbbullet}_M,\rd)
}
\end{array}
\end{equation}
commutes in the following sense. For any rapid-decay forms $\omega_1^\rrd$ and $\omega_2^\rrd$ with coefficients in $\cV$, we let $\omega_1^\rmod,\omega_2^\rmod$ be the same forms considered as forms with moderate growth. Commutativity means that for any such $\omega_1^\rrd,\omega_2^\rrd$ the pairings satisfy
\[
\DRpairing_{\rrd,\rmod}(\omega_1^\rrd,\omega_2^\rmod)=\DRpairing_{\rmod,\rrd}(\omega_1^\rmod,\omega_2^\rrd).
\]

For any $r\geq0$, let $\DRpairing^{m-r}_{\rrd,\rmod}$ be the global de~Rham pairing
\[
\DRpairing^{m-r}_{\rrd,\rmod}:\Gamma(M,\cE_M^{\rrd,m-r}\otimes\cV)^\nabla\otimes\Gamma(M,\cE_M^{\rmod,r}\otimes\cV)^\nabla\to\CC
\]
obtained by restricting to horizontal sections the composition of $\Gamma(M,\DRpairing_{\rrd,\rmod})$ with integration $\int_M:\cE_M^{\rrd,m}\to\CC$. Define $\DRpairing^{m-r}_{\rmod,\rrd}$ similarly. Since rapid decay forms vanish on $\bM$, it~follows from Stokes formula that $\DRpairing^{m-r}_{\rrd,\rmod}$ (\resp $\DRpairing^{m-r}_{\rmod,\rrd}$) vanishes when one of the terms is a coboundary. We~thus obtain from the perfectness of \eqref{eq:rdmodpairings}:

\begin{cor}\label{cor:VerdierS}
Under Assumptions \ref{hyp:hyp}, the induced pairing
\[
\DRpairing^{m-r}_{\rrd,\rmod}:\coH_{\dR,\rrd}^{m-r}(M,\cV)\otimes \coH_{\dR,\rmod}^r(M,\cV)\to\CC
\]
is perfect, and so is $\DRpairing^{m-r}_{\rmod,\rrd}\,$.\qed
\end{cor}

\subsubsection*{Twisted currents with rapid decay and moderate growth}
We consider the following complexes of currents with coefficients in~$\cV$ (\cf \eqref{eq:tensorcurrents} for the boundary operator):
\begin{itemize}
\item
twisted currents with rapid decay
\[
(\Cb^{\rrd}_{M,\bbullet}(\cV),\partial)=(\Cb_{M,\bbullet}^\rmod,\partial)\otimes\DR^{\rrd}(\cV,\nabla)\simeq(\Cb_{M,\bbullet}^\rmod,\partial)\otimes\cV^\rrd,
\]
\item
twisted currents with moderate growth
\[
(\Cb^{\rmod}_{M,\bbullet}(\cV),\partial)=(\Cb_{M,\bbullet}^\rmod,\partial)\otimes\DR^{\rmod}(\cV,\nabla) \simeq(\Cb_{M,\bbullet}^\rmod,\partial)\otimes\cV^\rmod.
\]
\end{itemize}
We define respectively
the de~Rham homology with rapid decay and with moderate growth with coefficients in~$\cV$ as
\begin{align*}
\coH_{\dR,r}^\rrd(M,\cV)&=\coH_r\bigl(\Gamma(M,\Cb^{\rrd}_{M,\bbullet}(\cV))\bigr),\\
\coH_{\dR,r}^\rmod(M,\cV)&=\coH_r\bigl(\Gamma(M,\Cb^{\rmod}_{M,\bbullet}(\cV))\bigr)
\end{align*}
(recall that $M$ is compact). For example, when $(\cV,\nabla)=(\cC_M^\infty(*),\rd)$, rapid decay and moderate de~Rham homologies are respectively the homology and the Borel-Moore homology of~$M^\circ$ (\cf Remark~\ref{rem:BM}).

\subsubsection*{The Poincaré isomorphisms}
For the sake of simplicity, we will write the complex of currents cohomologically via the relation $\Cb_{M,q}^\rmod=\Cb_M^{\rmod,m-q}$, so that we have a quasi-isomorphism $\CC_M\isom(\Cb_M^{\rmod,\cbbullet},\partial)$ (\cf Proposi\-tion \ref{prop:Cmod}). This leads to Poincaré isomorphisms in the bounded derived category $\catD^\rb(\CC_M)$, making the following diagram commutative:\begin{equation}\label{eq:ccPVrdmod}
\begin{array}{c}
\xymatrix@C=1.8cm{
\DR^{\rrd}(\cV,\nabla)\ar[r]^-{\ccP^\rrd(\cV)}_-\sim\ar[d]&\Cb_M^{\rmod,\cbbullet}\otimes\DR^{\rrd}(\cV,\nabla)=\Cb^{\rrd,\cbbullet}_M(\cV)\ar@<-1.45cm>[d]\\
\DR^{\rmod}(\cV,\nabla)\ar[r]^-{\ccP^\rmod(\cV)}_-\sim&\Cb_M^{\rmod,\cbbullet}\otimes\DR^{\rmod}(\cV,\nabla)=\Cb_M^{\rmod,\cbbullet}(\cV).
}
\end{array}
\end{equation}
By taking hypercohomologies (recall that $M$ is compact) and switching back to homo\-logy on the right-hand side, we thus obtain global Poincaré isomorphisms\vspace*{-3pt}
\begin{equation}\label{eq:ccPVell}
\begin{split}
\ccP_r^\rrd(\cV):\coH^{m-r}_{\dR,\rrd}(M,\cV)&\isom \coH_{\dR,r}^\rrd(M,\cV),\\
\ccP_r^\rmod(\cV):\coH^{m-r}_{\dR,\rmod}(M,\cV)&\isom \coH_{\dR,r}^\rmod(M,\cV).
\end{split}
\end{equation}

\subsubsection*{De Rham period pairings}
We define the de~Rham period pairing
\begin{equation}\label{eq:Prdmod}
\Ppairing^{\rrd,\rmod}_\dR:(\Cb^{\rrd,\cbbullet}_{M}(\cV),\partial)\otimes\DR^{\rmod}(\cV,\nabla)\to(\Cb_{M}^{\rrd,\cbbullet},\partial)
\end{equation}
as follows:
for a $q$-dimensional current $T_q=T^{m-q}$ with moderate growth,
an $r$-form $\eta^r$ with rapid decay,
an $s$-form $\omega^s$ with moderate growth
and local sections $v,w$ of $\cV$,
the pairing of the local sections
$T_q\otimes\eta^r\otimes v$ and $\omega^s\otimes w$ is the current
\[
\Ppairing^{\rrd,\rmod}_\dR(T_q\otimes\eta^r\otimes v,\omega^s\otimes w)=T_q\otimes(\langle v,w\rangle\cdot\eta^r\wedge\omega^s)
\]
of dimension $q-r-s$.
Compatibility with differentials follows from \eqref{eq:tensorcurrents}. It has rapid decay since $\langle v,w\rangle$ has at most a pole (and hence moderate growth), and thus $\langle v,w\rangle\cdot\eta^r\wedge\omega^s$ has rapid decay. Note that, if $T_q$ is $C^\infty$, then we recover $\DRpairing_{\rrd,\rmod}$.

Similarly we define the period pairing
\begin{equation}\label{eq:Pmodrd}
\Ppairing^{\rmod,\rrd}_\dR:(\Cb^{\rmod,\cbbullet}_{M}(\cV),\partial)\otimes\DR^{\rrd}(\cV,\nabla)\to(\Cb_{M}^{\rrd,\cbbullet},\partial).
\end{equation}
Then the following (cohomological) diagram commutes:\vspace*{-3pt}
\begin{equation}\label{eq:Speriod}
\begin{array}{c}
\xymatrix@C=1.5cm{
(\Cb^{\rrd,\cbbullet}_{M}(\cV),\partial)\otimes\DR^{\rmod}(\cV,\nabla)
\ar[r]^-{\Ppairing^{\rrd,\rmod}_\dR}
&(\Cb_{M}^{\rrd,\cbbullet},\partial)\\
\DR^{\rrd}(\cV,\nabla)\otimes\DR^{\rmod}(\cV,\nabla)\ar[r]^-{\DRpairing_{\rrd,\rmod}}\ar@<-5ex>@{=}[u]\ar@<8ex>[u]^{\ccP^\rrd(\cV)}_\wr
&(\cE_M^{\rrd,\cbbullet},\partial)\ar[u]^\wr_{\ccP^\rrd}
}
\end{array}
\end{equation}
as well as a similar diagram for $\DRpairing_{\rmod,\rrd}$. For each $r\geq0$, let $\Ppairing^{\rrd,\rmod}_{\dR,r}$ and $\Ppairing^{\rmod,\rrd}_{\dR,r}$ be the global period pairings\vspace*{-3pt}
\begin{align*}
\Ppairing^{\rrd,\rmod}_{\dR,r}:\Gamma(M,\Cb_{M}^{\rrd,m-r}(\cV))^\partial\otimes\Gamma(M,\cE_M^{\rmod,r}\otimes\cV)^\nabla\to\Gamma(M,\Cb_{M}^{\rrd,m})\to\CC\\
\Ppairing^{\rmod,\rrd}_{\dR,r}:\Gamma(M,\Cb_{M}^{\rmod,m-r}(\cV))^\partial\otimes\Gamma(M,\cE_M^{\rrd,r}\otimes\cV)^\nabla\to\Gamma(M,\Cb_{M}^{\rrd,m})\to\CC
\end{align*}
obtained by evaluating on the constant function $1$ the zero-dimensional rapid-decay current provided by $\Ppairing^{\rrd,\rmod}_\dR$ and $\Ppairing^{\rmod,\rrd}_\dR$ respectively. Then, according to Stokes formula, $\Ppairing^{\rrd,\rmod}_{\dR,r}$ and $\Ppairing^{\rmod,\rrd}_{\dR,r}$ vanish if one of the terms is a boundary, and hence induce pairings\vspace*{-3pt}
\begin{align*}
\Ppairing^{\rrd,\rmod}_{\dR,r}:\coH_{\dR,r}^{\rrd}(M,\cV)\otimes \coH^r_{\dR,\rmod}(M,\cV)&\to\CC\\
\Ppairing^{\rmod,\rrd}_{\dR,r}:\coH_{\dR,r}^{\rmod}(M,\cV)\otimes \coH^r_{\dR,\rrd}(M,\cV)&\to\CC.
\end{align*}
It is then clear that the following diagram commutes (and similarly for $\DRpairing^{r}_{\rmod,\rrd}$):\vspace*{-3pt}
\begin{equation}\label{eq:DPP}
\begin{array}{c}
\xymatrix@C=1.8cm{
\coH_{\dR,r}^{\rrd}(M,\cV)\otimes \coH^r_{\dR,\rmod}(M,\cV)\ar[r]^-{\Ppairing^{\rrd,\rmod}_{\dR,r}}
&\CC\ar@{=}[d]\\
\coH^{m-r}_{\dR,\rrd}(M,\cV)\otimes \coH^r_{\dR,\rmod}(M,\cV)\ar[r]^-{\DRpairing^{m-r}_{\rrd,\rmod}}
\ar@<-5ex>@{=}[u]\ar@<8ex>[u]^{\ccP_r^\rrd(\cV)}_\wr
&\phantom{.}\CC.\\
}
\end{array}
\end{equation}
We then deduce from Corollary \ref{cor:VerdierS}:

\begin{cor}\label{cor:VerdierP}
The period pairings $\Ppairing^{\rrd,\rmod}_{\dR,r}$ and $\Ppairing^{\rmod,\rrd}_{\dR,m-r}$ are perfect.\qed
\end{cor}

\subsubsection*{De~Rham intersection pairing}
We can apply once more the Poincaré isomorphisms to \eqref{eq:Speriod}, and get the morphism $\Ipairing_\dR^{\rmod,\rrd}$ (\resp $\Ipairing_\dR^{\rrd,\rmod}$) making the following (cohomological) diagram commute:\vspace*{-5pt}
\[
\xymatrix@C=1.5cm{
\Cb_{M}^{\rmod,\cbbullet}(\cV)\otimes \DR^{\rrd}(\cV,\nabla)\ar[r]^-{\Ppairing^{\rmod,\rrd}_\dR}
\ar@<-8ex>@{=}[d]\ar@<5ex>[d]^{\ccP^\rrd(\cV)}_\wr
&(\Cb_{M}^{\rrd,\cbbullet},\partial)\ar@{=}[d]\\
\Cb_{M}^{\rmod,\cbbullet}(\cV)\otimes \Cb_{M}^{\rrd,\cbbullet}(\cV)\ar[r]^-{\Ipairing_\dR^{\rmod,\rrd}}&(\Cb_{M}^{\rrd,\cbbullet},\partial)
}
\]
and therefore also the following:\vspace*{-5pt}
\begin{equation}\label{eq:BPP}
\begin{array}{c}
\xymatrix@C=1.8cm{
\coH_{\dR,m-r}^{\rmod}(M,\cV)\otimes \coH^{m-r}_{\dR,\rrd}(M,\cV)\ar[r]^-{\Ppairing^{\rmod,\rrd}_{\dR,m-r}}
\ar@<-8ex>@{=}[d]\ar@<5ex>[d]^{\ccP_r^\rrd(\cV)}_\wr
&\CC\ar@{=}[d]\\
\coH_{\dR,m-r}^{\rmod}(M,\cV)\otimes \coH_{\dR,r}^{\rrd}(\cV)\ar[r]^-{\Ipairing_{\dR,m-r}^{\rmod,\rrd}}&\phantom{.}\CC.
}
\end{array}
\end{equation}
For cohomology classes $[\eta^r]\in\coH^{m-r}_{\dR,\rrd}(M,\cV)$ and $[\omega^{m-r}]\in\coH^r_{\dR,\rmod}(M,\cV)$, we have by definition
\[
\DRpairing_{\rrd,\rmod}^r([\eta^r],[\omega^{m-r}])=\Ipairing_{\dR,m-r}^{\rrd,\rmod}(\ccP^\rrd_{m-r}([\eta^r]),\ccP^\rmod_r([\omega^{m-r}])).
\]

\begin{cor}\label{cor:VerdierI}
The de~Rham intersection pairings $\Ipairing_{\dR,r}^{\rrd,\rmod},\Ipairing_{\dR,r}^{\rmod,\rrd}$ are perfect for each $r\geq0$.
\end{cor}

\begin{proof}
This follows from the perfectness of the period pairings (\cf Corollary \ref{cor:VerdierP}).
\end{proof}

\subsubsection*{Quadratic relations}
We assume that the cohomology and homology vector spaces we consider are finite-dimensional and we fix bases of these spaces. We denote by the same letter a pairing and its matrix with respect to the corresponding bases, and we use the matrix notation in which, for a pairing $\mathsf A$, we also denote $\mathsf A(x\otimes y)$ by ${}^tx\cdot\mathsf A\cdot y$,
and the transpose of the matrix $\mathsf{A}$ by ${}^t\mathsf A$.

\begin{cor}[Quadratic relations]\label{cor:quadraticreldR}
The matrices of the pairings satisfy the following relations:
\[
{}^t\Ipairing_{\dR,m-r}^{\rmod,\rrd}=\Ppairing^{\rrd,\rmod}_{\dR,r}\cdot(\DRpairing^{m-r}_{\rrd,\rmod})^{-1}\cdot{}^t\Ppairing^{\rmod,\rrd}_{\dR,m-r}.
\]
\end{cor}

\begin{proof}
In terms of matrices, the commutations \eqref{eq:DPP} and \eqref{eq:BPP} read
\[
{}^t\ccP_r^\rrd(\cV)\cdot\Ppairing^{\rrd,\rmod}_{\dR,r}=\DRpairing^{m-r}_{\rrd,\rmod},\quad
\Ipairing_{\dR,m-r}^{\rmod,\rrd}\cdot\ccP_r^\rrd(\cV)=\Ppairing^{\rmod,\rrd}_{\dR,m-r},
\]
and, by eliminating $\ccP_r^\rrd(\cV)$, we obtain the sought for relation.
\end{proof}

See Remark \ref{rem:symmetrymid} for a justification of the terminology ``quadratic relations''.

\begin{remark}[Quadratic relations in presence of $\pm$-symmetry]\label{rem:symmetry}
Let us assume that $\gen{\cbbullet,\cbbullet}$ is either symmetric or skew-symmetric, a property that we call \emph{$\pm$\nobreakdash-sym\-metric.} Let us denote by
\[
{}^t\DRpairing^{m-r}_{\rrd,\rmod}:\coH_{\dR,\rmod}^r(M,\cV)\otimes \coH_{\dR,\rrd}^{m-r}(M,\cV)\to\CC
\]
the pairing defined (with obvious notation) by
\[
{}^t\DRpairing^{m-r}_{\rrd,\rmod}(h^r,h^{m-r})=\DRpairing^{m-r}_{\rrd,\rmod}(h^{m-r},h^r),
\]
and let ${}^t\DRpairing^{m-r}_{\rmod,\rrd}$ be defined similarly. If~$\eta^{m-r}$ is a rapid decay $(m-r)$-form and~$\omega^r$ an $r$-form with moderate growth, and if $v,w$ are local sections of $\cV$, we have
\begin{align*}
\DRpairing_{\rrd,\rmod}^{m-r}(\eta^{m-r}\otimes v,\omega^r\otimes w)&=\langle v,w\rangle\cdot\eta^{m-r}\wedge\omega^r\\
&=\pm(-1)^{r(m-r)}\langle w,v\rangle\omega^r\wedge\eta^{m-r}\\
&=\pm(-1)^{r(m-r)}\DRpairing_{\rmod,\rrd}^r(\omega^r\otimes w,\eta^{m-r}\otimes v).
\end{align*}
Passing to cohomology, we find
\[
{}^t\DRpairing^{m-r}_{\rrd,\rmod}=\pm(-1)^{r(m-r)}\DRpairing_{\rmod,\rrd}^r,\qquad {}^t\DRpairing^{m-r}_{\rmod,\rrd}=\pm(-1)^{r(m-r)}\DRpairing_{\rrd,\rmod}^r.
\]
For the de~Rham intersection pairing, we similarly find
\[
{}^t\Ipairing^{\rmod,\rrd}_{\dR,m-r}=\pm(-1)^{r(m-r)}\Ipairing_{\dR,r}^{\rrd,\rmod}.
\]
As a consequence, the quadratic relations read, in term of matrices,
\begin{equation*}
\pm(-1)^{r(m-r)}\Ipairing_{\dR,r}^{\rrd,\rmod}=\Ppairing^{\rrd,\rmod}_{\dR,r}\cdot(\DRpairing^{m-r}_{\rrd,\rmod})^{-1}\cdot{}^t\Ppairing^{\rmod,\rrd}_{\dR,m-r}.
\end{equation*}
\end{remark}

\subsection{Middle quadratic relations}\label{subsec:middlequadrel}
For each degree $r$, we set
\begin{align*}
\coH^\rmid_{\dR,r}(M,\cV)&=\image\bigl[\coH^\rrd_{\dR,r}(M,\cV)\to \coH^\rmod_{\dR,r}(M,\cV)\bigr],\\
\coH^{m-r}_{\dR,\rmid}(M,\cV)&=\image\bigl[\coH^{m-r}_{\dR,\rrd}(M,\cV)\to \coH^{m-r}_{\dR,\rmod}(M,\cV)\bigr],
\end{align*}
where the maps are induced by the natural inclusion $\cC_M^{\infty,\rrd} \to \cC_M^{\infty,\rmod}$.
From \eqref{eq:Srdmod} one deduces that $\DRpairing^{m-r}_{\rrd,\rmod}$ and $\DRpairing^{m-r}_{\rmod,\rrd}$ induce the same \emph{non-degenerate} pairing
\[
\DRpairing^{m-r}_{\rmid}:\coH_{\dR,\rmid}^{m-r}(M,\cV)\otimes \coH_{\dR,\rmid}^r(M,\cV)\to\CC.
\]
Similarly, according to \eqref{eq:ccPVrdmod}, $\ccP_r^\rrd(\cV)$ and $\ccP_r^\rmod(\cV)$ induce the same isomorphism
\[
\ccP_r^\rmid(\cV):\coH_{\dR,\rmid}^{m-r}(M,\cV)\to \coH_{\dR,r}^\rmid(M,\cV).
\]
It follows that $\Ppairing^{\rrd,\rmod}_{\dR,r}$ and $\Ppairing^{\rmod,\rrd}_{\dR,r}$ induce the same \emph{non-degenerate} period pairing
\[
\Ppairing^{\rmid}_{\dR,r}:\coH_{\dR,r}^\rmid(M,\cV)\otimes \coH_{\dR,\rmid}^{m-r}(M,\cV)\to\CC.
\]
In the same way, $\Ipairing^{\rrd,\rmod}_{\dR,r}$ and $\Ipairing^{\rmod,\rrd}_{\dR,r}$ induce the same \emph{non-degenerate} intersection pairing
\[
\Ipairing^{\rmid}_{\dR,r}:\coH_{\dR,r}^\rmid(M,\cV)\otimes \coH_{\dR,{m-r}}^\rmid(M,\cV)\to\CC.
\]

We conclude:

\begin{cor}[Middle quadratic relations]\label{cor:midquadraticreldR}
These pairings are non-degenerate and their matrices satisfy the following relations:
\[
{}^t\Ipairing_{\dR,m-r}^{\rmid}=\Ppairing^{\rmid}_{\dR,r}\cdot(\DRpairing^{m-r}_{\rmid})^{-1}\cdot{}^t\Ppairing^{\rmid}_{\dR,m-r}.
\]
\end{cor}

\begin{remark}[$\pm$-Symmetry and quadratic relations]\label{rem:symmetrymid}
Under the symmetry assumption of Remark \ref{rem:symmetry}, the following relations hold:
\[
{}^t\Ipairing_{\dR,m-r}^{\rmid}=\pm(-1)^{r(m-r)}\Ipairing^{\rmid}_{\dR,r},\quad {}^t\DRpairing^r_{\rmid}=\pm(-1)^{r(m-r)}\DRpairing_{\rmid}^{m-r},
\]
and the relations of Corollary \ref{cor:midquadraticreldR} read
\begin{equation*}
\pm(-1)^{r(m-r)}\Ipairing^{\rmid}_{\dR,r}=\Ppairing^{\rmid}_{\dR,r}\cdot(\DRpairing^{m-r}_{\rmid})^{-1}\cdot{}^t\Ppairing^{\rmid}_{\dR,m-r}.
\end{equation*}
If $r=m-r$, we can regard these formulas as \emph{algebraic relations of degree two} on the entries of the matrix $\Ppairing^{\rmid}_{\dR,r}$, with coefficients in the entries of $\Ipairing_{\dR,r}^{\rmid}$ and $\DRpairing^{m-r}_{\rmid}$. This justifies the name ``quadratic relations'' which, strictly speaking, only applies to the middle periods in middle dimension.
\end{remark}

\begin{caveat}[On the notation $\rmid$]
The terminology ``middle quadratic relation'' and the associated notation $\rmid$ could be confusing,
as it is usually associated with the notion of intermediate (or middle) extension of a sheaf across a divisor.
Here, we use it in the naive sense above, and we do not claim in general (that is, in the setting of Section \ref{subsec:comparisonperiods} below) any precise relation with the usual notion of intermediate extension. However, we will check that both notions coincide in complex dimension one (\cf \eqref{eq:Hmid}).
\end{caveat}

\subsection{Betti formalism for quadratic relations}\label{subsec:Bettiquadratic}
In the topological setting we replace the complex of currents $(\Cb_{M,\bbullet}^\rmod,\partial)$ with the complex of sheaves of relative singular chains $(\ccC_{M,\bM,\bbullet},\partial)$. We~will consider the latter sheaves with coefficients in $\CC$ since we will compare them with sheaves of currents. We refer to Appendix~\ref{sec:chains} for basic properties of this complex, and we recall (\cf \eg \cite[p.\,14]{Hien09}) that $(\ccC_{M,\bM,\bbullet},\partial)$, when regarded cohomologically by setting $\ccC_{M,\bM}^{m-\bbullet}=\ccC_{M,\bM,\bbullet}$, is a homotopically fine resolution of~$\CC_M$. Proposition \ref{prop:Csm} enables us to replace the complex of sheaves of singular chains with that of piecewise smooth singular chains, that we will denote in the same way.

\subsubsection*{Betti period pairings}
Integration along a piecewise smooth singular chain in $M$ of a test form with rapid decay along~$\bM$ \hbox{defines} a morphism of chain complexes $(\ccC_{M,\bM,\bbullet},\partial)\to(\Cb_{M,\bbullet}^\rmod,\partial)$: indeed, if~$c$ is a $q$-dimensional chain and $\omega$ a form of degree $q$ on $M^\circ$ with rapid decay, $\int_c\omega=0$ if $c$ is supported in $\bM$; compatibility with $\partial$ follows from the Stokes formula. This morphism is a quasi-isomorphism since it induces an isomorphism on the unique non-zero homology sheaf of both complexes; it yields a morphism between two resolutions of $\CC_M$ taken in homological degree $m$.

We introduce (see Section \ref{sec:chains}) the following complexes of singular chains:
\begin{itemize}
\item
$(\ccC_{M,\bM,\bbullet}(\cV^\rrd),\partial)=(\ccC_{M,\bM,\bbullet},\partial)\otimes\cV^{\rrd}$: chains with coefficients in $\cV$ and rapid decay,
\item
$(\ccC_{M,\bM,\bbullet}(\cV^\rmod),\partial)=(\ccC_{M,\bM,\bbullet},\partial)\otimes\cV^{\rmod}$: chains with coefficients in~$\cV$ and moderate growth.
\end{itemize}
We also set
\begin{align*}
\coH_r^\rrd(M,\cV)&=\coH_r\bigl(\Gamma(\ccC_{M,\bM,\bbullet}(\cV^\rrd),\partial)\bigr)\\
\tag*{and}
\coH_r^\rmod(M,\cV)&=\coH_r\bigl(\Gamma(\ccC_{M,\bM,\bbullet}(\cV^\rmod),\partial)\bigr).
\end{align*}
We thus have quasi-isomorphisms of chain complexes
\begin{equation}\label{eq:singcurrents}
\begin{split}
(\ccC_{M,\bM,\bbullet}(\cV^\rrd),\partial)&\isom(\Cb_{M,\bbullet}^\rrd(\cV),\partial),\\
(\ccC_{M,\bM,\bbullet}(\cV^\rmod),\partial)&\isom(\Cb_{M,\bbullet}^\rmod(\cV),\partial),
\end{split}
\end{equation}
which induce isomorphisms
\begin{equation}\label{eq:isoBdR}
\coH_r^\rrd(M,\cV)\isom \coH_{\dR,r}^\rrd(M,\cV)\quad\text{and}\quad \coH_r^\rmod(M,\cV)\isom \coH_{\dR,r}^\rmod(M,\cV).
\end{equation}

Composing \eqref{eq:Prdmod} and \eqref{eq:Pmodrd} with these morphisms gives back the period pairings as those defined in \cite{Hien09}, that we denote by $\Ppairing^{\rrd,\rmod}_r$ and $\Ppairing^{\rmod,\rrd}_r$. In particular, these Betti period pairings are perfect. Notice that, by means of these identifications, the Poincaré-de~Rham isomorphisms~\eqref{eq:ccPVell} correspond to the Poincaré isomorphisms, denoted similarly:
\begin{equation}\label{eq:ccPVellB}
\begin{split}
\ccP_r^\rrd(\cV):\coH^{m-r}(M,\cV^\rrd)&\isom \coH_r(M,\cV^\rrd),\\
\ccP_r^\rmod(\cV):\coH^{m-r}(M,\cV^\rmod)&\isom \coH_r(M,\cV^\rmod).
\end{split}
\end{equation}

According to Propositions \ref{prop:comparisonhomology} and \ref{prop:Csm}, the homology spaces $\coH_r^\rrd(M,\cV)$ (\resp $\coH_r^\rmod(M,\cV)$) can be computed as the homology of the chain complexes
\[
(\wt\ccC^\sm_{M,\bM,\bbullet}(M,\cV^\rrd)),\partial)\quad (\resp (\wt\ccC^\sm_{M,\bM,\bbullet}(M,\cV^\rmod)),\partial))
\]
of piecewise smooth singular chains with coefficients in $\cV^\rrd$ (\resp $\cV^\rmod$).

On the one hand, let $\sigma:\Delta_r\to M$ be a piecewise smooth singular simplex (that we can assume not contained in~$\bM$ since we only consider relative simplices), and let $v$ be a section of $\cV^\rrd$ in the neighbourhood of $|\sigma|$, so that $\sigma\otimes v$ belongs to $\wt\ccC_{M,\bM,\bbullet}^{\sm}(M,\cV^\rrd)$. On the other hand, note that, by using a partition of unity, any section in $\Gamma(M,\cE_M^{\rmod,r}\otimes\cV)$ is a sum of terms $\omega^r\otimes w$, where~$w$ is a section of~$\cV$
in the neighbourhood of the support of the $C^\infty$ $r$\nobreakdash-form~$\omega^r$. Then $\langle v,w\rangle$ is a $C^\infty$ function on the open set where both sections are defined and has rapid decay along~$\bM$, so that $\langle v,w\rangle\cdot\omega^r$ is a $C^\infty$-form with rapid decay, and hence integrable along~$\sigma$. We have
\[
\Ppairing^{\rrd,\rmod}_r(\sigma\otimes v,\omega^r\otimes w)=\int_\sigma\langle v,w\rangle\cdot\omega^r=\int_{\Delta_r}\sigma^*\langle v,w\rangle\cdot\sigma^*\omega^r.
\]

This interpretation makes the link between the approaches in \cite{B-E04b} and \cite{Hien09}. A~similar interpretation holds for $\Ppairing^{\rmod,\rrd}_r$: now $\langle v,w\rangle$ has only moderate growth, but~$\omega^r$ has rapid decay, so that~$\langle v,w\rangle\cdot\omega^r$ remains a $C^\infty$-form with rapid decay.

\subsubsection*{Existence of a $\kk$-structure}
Let us fix a subfield $\kk$ of $\CC$. The sheaves $\ccC_{M,\bM,\bbullet}$ are now considered with coefficients in $\QQ$. We will make explicit the $\kk$-structure on the various cohomology groups occurring in the de~Rham model, provided that a $\kk$-structure exists on the underlying sheaves. If we are given a $\kk$-structure $(\cV^\nabla)_\kk$ of $\cV^\nabla$, we enrich Assumptions \ref{hyp:hyp} as follows.

\begin{assumption}\mbox{}\label{hyp:k}
There exist subsheaves $\cV_\kk^\rrd\hto\cV_\kk^\rmod\hto j_*(\cV^\nabla)_\kk$ of $\kk$\nobreakdash-vector spaces of $j_*\cV^\nabla$ and a non-degenerate pairing $\gen{\cbbullet,\cbbullet}_\kk$ on $(\cV^\nabla)_\kk$ giving rise to $\cV^\rrd\hto\cV^\rmod\hto j_*\cV^\nabla$ and $\gen{\cbbullet,\cbbullet}$ after tensoring with $\CC$.
\end{assumption}

As a consequence, the perfect pairings $\cH^0\DRpairing_{\rrd,\rmod}$ and $\cH^0\DRpairing_{\rmod,\rrd}$ in Assumption \ref{hyp:hyp}\eqref{hyp:2} are defined over $\kk$:
\[
\cV^\rrd_\kk\otimes\cV^\rmod_\kk\to j_!\kk_{M^\circ},\quad
\cV^\rmod_\kk\otimes\cV^\rrd_\kk\to j_!\kk_{M^\circ}.
\]
In particular, $\cH^0$ of the commutative diagram \eqref{eq:Srdmod} is defined over $\kk$. Furthermore, the de~Rham isomorphism induces a $\kk$-structure on the de~Rham cohomology groups, by setting for example $\coH^r_{\dR,\rrd}(M,\cV)_\kk=\coH^r(M,\cV^\rrd_\kk)$ via the de~Rham isomorphism $\coH^r_{\dR,\rrd}(M,\cV)\isom\coH^r(M,\cV^\rrd)$. The pairings of Corollary \ref{cor:VerdierS} are defined over $\kk$ and correspond to Poincaré-Verdier duality over~$\kk$.

The $\kk$-structures on rapid decay and moderate growth cycles
are obtained by means of the complexes $(\ccC_{M,\bM,\bbullet},\partial)\otimes\cV^{\rrd}_\kk$ and $(\ccC_{M,\bM,\bbullet},\partial)\otimes\cV^{\rmod}_\kk$. In other words, we set
\[
\coH_r^{\rrd}(M,\cV)_\kk=\coH_r(M,\cV^\rrd_\kk),\ \text{etc.}
\]
The Poincaré isomorphisms \eqref{eq:ccPVellB} are then defined over $\kk$.

\begin{prop}\label{prop:kstructDR}
Under Assumptions \ref{hyp:hyp} and \ref{hyp:k}, let $(v_i)_i$ be a basis of $\coH^{m-r}_{\dR,\rrd}(M,\cV)$ and let $(v_i^\vee)_i$ be the $\DRpairing^{m-r}_{\rrd,\rmod}$-dual basis of $\coH^r_{\dR,\rmod}(M,\cV)$. The $\kk$-vector space $\coH^{m-r}_{\dR,\rrd}(M,\cV)_\kk$ is the~$\kk$\nobreakdash-subspace of $\coH^{m-r}_{\dR,\rrd}(M,\cV)$ consisting of the elements
\[
\sum_i\Ppairing^{\rrd,\rmod}_{\dR,r}(\alpha,v_i^\vee)\cdot v_i,
\]
where $\alpha$ runs in $\coH_r(M,\cV^\rrd_\kk)$. A similar assertion holds for $\coH^{m-r}_{\dR,\rmod}(M,\cV)_\kk$.
\end{prop}

\begin{proof}
Let $e\in \coH^{m-r}_{\dR,\rrd}(M,\cV)$ be an element written as
\[
e=\sum_i\DRpairing_{\rrd,\rmod}^{m-r}(e,v_i^\vee)\cdot v_i.
\]
Let us set $\alpha=\ccP_r^\rrd(e)\in\coH_{\dR,r}^\rrd(M,\cV)$. According to \eqref{eq:DPP}, we can rewrite $e$ as
\[
e=\sum_i\Ppairing^{\rrd,\rmod}_{\dR,r}(\alpha,v_i^\vee)\cdot v_i.
\]

The class $e$ belongs to $\coH^{m-r}_{\dR,\rrd}(M,\cV)_\kk$ if and only if,
when regarded as in $\coH^{m-r}(M,\cV^\rrd)$,
it belongs to $\coH^{m-r}(M,\cV^\rrd_\kk)$, equivalently, its image $\alpha$ by the Poincaré isomorphism $\ccP_r^\rrd(\cV)$ belongs to~$\coH_r(M,\cV^\rrd_\kk)$. The assertion of the proposition follows from the above identifications.
\end{proof}

\subsubsection*{Middle quadratic relations}\label{subsec:intersection}
The Betti intersection pairing
\[
\Ipairing^{\rrd,\rmod}_r:\coH_r^\rrd(M,\cV)\otimes \coH_{m-r}^\rmod(M,\cV)\to\CC
\]
is defined from the de~Rham intersection pairing $\Ipairing^{\rrd,\rmod}_{\dR,r}$ (\cf Corollary \ref{cor:VerdierI}) by composing it with the isomorphisms \eqref{eq:isoBdR}. It is thus a \emph{perfect pairing}. A similar result holds for $\Ipairing^{\rmod,\rrd}_r$.

From Corollary \ref{cor:midquadraticreldR} we obtain immediately (and of course similarly for the rapid decay and mod\-erate growth analogues):

\begin{cor}\label{cor:quadraticrelB}
The middle Betti period pairings satisfy the quadratic relations of Corollary \ref{cor:midquadraticreldR}, where the de~Rham intersection pairing $\Ipairing^\rmid_{\dR,r}$ is replaced with the Betti intersection pairing~$\Ipairing^\rmid_r$. That is, in term of matrices,
\[
{}^t\Ipairing^\rmid_{m-r}=\Ppairing^{\rmid}_r\cdot(\DRpairing_\rmid^{m-r})^{-1}\cdot{}^t\Ppairing^{\rmid}_{m-r}.
\]
Under the symmetry assumption of Remark \ref{rem:symmetrymid}, it reads
\[
\pm(-1)^{r(m-r)}\Ipairing^{\rmid}_r=\Ppairing^{\rmid}_r\cdot(\DRpairing_\rmid^{m-r})^{-1}\cdot{}^t\Ppairing^{\rmid}_{m-r}.
\]
\end{cor}

\subsubsection*{Computation of the Betti intersection pairing}
Let us assume that $(M,\bM)$ is endowed with a simplicial decom\-po\-sition $\cT$ such that the sheaves $\cV^\rrd,\cV^\rmod$ satisfy Assump\-tion \ref{ass:simplicial} and are locally constant on $M^\circ$. Then we can \hbox{replace} the complex $(\wt\ccC_{M,\bM,\bbullet}^{\sm}(M,\cV^\rrd)),\partial)$, respec\-tively the complex $(\wt\ccC_{M,\bM,\bbullet}^{\sm}(M,\cV^\rmod)),\partial)$, with the corresponding simplicial chain complex $(\wt\ccC_{M,\bM,\bbullet}^\triup(M,\cV^\rrd)),\partial)$, respectively $(\wt\ccC_{M,\bM,\bbullet}^\smallsquare(M,\cV^\rmod)),\partial)$ (\cf Section \ref{subsec:appdualchain}).

The Betti intersection pairing can easily be computed in the framework of simplicial chain complexes under this assumption. Indeed, choose an orientation for each simplex (it is natural to assume that the maximal-dimensional simplices have the orientation induced by that of $M$) and an orientation for each dual cell $\ov D(\sigma)$. Let $\sigma_r\otimes v$ be a simplex of dimension $r$ with coefficient~$v$ in $\cV^\rrd$ ($\sigma_r\not\subset\bM$) and let $\ov D(\sigma'_r)\otimes w$ be a cell of codimension $r$ with coefficient $w$ in $\cV^\rmod$ for some simplex $\sigma'_r$ of $\cT$ (possibly $\sigma'_r\subset\bM$). We regard them as currents with rapid decay and moderate growth respectively, according to \eqref{eq:singcurrents}.

\begin{prop}[Computation of the Betti intersection pairing]\label{prop:computationBetti}
With these assumptions,
\[
\Ipairing^{\rrd,\rmod}_r(\sigma_r\otimes v,\ov D(\sigma'_r)\otimes w)=
\begin{cases}
0&\text{if }\sigma'_r\neq\sigma_r,\\
\varepsilon(\sigma_r)\cdot\langle v,w\rangle(\wh\sigma_r)&\text{if }\sigma'_r=\sigma_r,
\end{cases}
\]
where $\wh\sigma_r$ is the barycenter of $\sigma_r$ and $\varepsilon(\sigma_r)=\pm1$ is the orientation change between $\sigma_r\times\ov D(\sigma_r)$ and~$M$.
\end{prop}

\begin{proof}
By definition, we have
\begin{equation}\label{eq:Brrdmod}
\Ipairing^{\rrd,\rmod}_r(\sigma_r\otimes v,\ov D(\sigma'_r)\otimes w)=\langle v,w\rangle\cdot \Ipairing_{\dR,r}(\sigma_r,\ov D(\sigma'_r)),
\end{equation}
where $\Ipairing_{\dR,r}$ corresponds to de~Rham's definition of the Kronecker index (\cf\cite[\S20]{deRham84}).

For any $r$-dimensional simplex $\sigma'_r$ of~$\cT$, $\sigma'_r$ is the only $r$-dimensional simplex of~$\cT$ physically intersected by the cell $\ov D(\sigma'_r)$ (\cf Lemma \ref{lem:A7}). Hence, given \hbox{$\sigma_r\!\not\subset\!\bM$}, we have the equality $\Ipairing_{\dR,r}(\sigma_r,\ov D(\sigma'_r))=0$ unless $\sigma'_r=\sigma_r$ and we are reduced to computing \eqref{eq:Brrdmod} in that case. This computation, done by de~Rham (\cf\cite[p.\,85--86]{deRham84}), is recalled in the appendix (Proposition~\ref{prop:Kronecker}). The current $\Ipairing_{\dR,r}(\sigma_r,\ov D(\sigma_r))$ is supported on $\wh\sigma_r=\sigma_r\cap \ov D(\sigma_r)$ and has coefficient $\varepsilon(\sigma_r)$, so~that
\[
\langle v,w\rangle\cdot \Ipairing_{\dR,r}(\sigma_r,\ov D(\sigma_r))=\langle v,w\rangle(\wh\sigma_r)\cdot \Ipairing_{\dR,r}(\sigma_r,\ov D(\sigma_r))=\varepsilon(\sigma_r)\langle v,w\rangle(\wh\sigma_r).\qedhere
\]
\end{proof}

\subsection{Period realizations}
We keep the setting and notation of Section \ref{subsec:Bettiquadratic}. Let $\bK,\kk$ be two subfields of $\CC$. The abelian category $\catPer_{\bK,\kk}$ of \emph{$(\bK,\kk)$-period structures} has objects consisting of pairs of finite-dimensional $\bK$, respectively $\kk$, vector spaces $(V_{\dR},V_\Betti)$ together with an isomorphism $\per:\CC\otimes_\kk V_\Betti\isom \CC\otimes_\bK V_\dR$. The morphisms are the natural ones. It is convenient to consider this category from a dual point of view. Namely, we consider the category $\catPer_{\bK,\kk}^\vee$ whose objects are triples $(V_\dR,V_\Betti,\Ppairing)$, where $\Ppairing:(\CC\otimes V_\dR)\otimes(\CC\otimes V_\Betti)\to\CC$ is a non-degenerate pairing, and whose morphisms are the natural ones. The duality functor $V_\Betti\mto V_\Betti^\vee$ induces an equivalence $T:\catPer_{\bK,\kk}\isom\catPer_{\bK,\kk}^\vee$ by means of the tautological pairing $(\cbbullet\sep\cbbullet):V_\kk\otimes V_\kk^\vee\to\kk$.

Let us consider $(\cV,\nabla,\gen{\cbbullet,\cbbullet})$ satisfying Assumptions \ref{hyp:hyp} and \ref{hyp:k}, and let us restrict to the case where $\bK=\CC$. It defines moderate and rapid decay period structures:
\begin{gather*}
\bigl(\coH^r_{\dR,\rmod}(M,\cV),\coH^r(M,\cV^\rmod_\kk),\per_\rmod\bigr),\\
\bigl(\coH^r_{\dR,\rrd}(M,\cV),\coH^r(M,\cV^\rrd_\kk),\per_\rrd\bigr),
\end{gather*}
where $\per_\rmod:\coH^r(M,\cV^\rmod_\kk)\to\coH^r_{\dR,\rmod}(M,\cV)$ is composed from the natural isomorphism $\coH^r(M,\cV^\rmod_\kk)\otimes\CC\simeq\coH^r(M,\cV^\rmod)$ and the inverse of the de~Rham isomorphism $\coH^r_{\dR,\rmod}(M,\cV)\isom\coH^r(M,\cV^\rmod)$, and $\per_\rrd$ is defined similarly.

\begin{lemma}
There is a natural isomorphism
\[
\bigl(\coH^r_{\dR,\rrd}(M,\cV),\coH^r(M,\cV^\rrd_\kk),\per_\rrd\bigr)
\simeq\bigl(\coH^{m-r}_{\dR,\rmod}(M,\cV),\coH^{m-r}(M,\cV^\rmod_\kk),\per_\rmod\bigr)^\vee.
\]
\end{lemma}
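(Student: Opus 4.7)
The plan is to unwind what a morphism in $\catPer_\kk$ of the stated form amounts to, and then construct it from two parallel applications of Poincaré--Verdier duality, one over $\CC$ and one over $\kk$.

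First, I would observe that giving an isomorphism of the form $(V_\CC,V_\kk,\per)\simeq(W_\CC,W_\kk,\per_W)^\vee$ in $\catPer_\kk$ amounts to producing a perfect $\kk$-bilinear pairing $V_\kk\otimes W_\kk\to\kk$ together with a perfect $\CC$-bilinear pairing $V_\CC\otimes W_\CC\to\CC$ that agree after extension of scalars via the comparison isomorphisms $\per$ and $\per_W$. Thus the content of the lemma is that the $\CC$-valued perfect pairing $\DRpairing^r_{\rrd,\rmod}$ of Corollary~\ref{cor:VerdierS} descends, along the de~Rham comparison, to a $\kk$-valued perfect pairing between $\coH^r(M,\cV^\rrd_\kk)$ and $\coH^{m-r}(M,\cV^\rmod_\kk)$.

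Next, I would apply Proposition~\ref{prop:accperfect} not to the $\CC$-linear perfect pairing $\cV^\rrd\otimes\cV^\rmod\to j_!\CC_{M^\circ}$ of Assumption~\ref{hyp:hyp}\eqref{hyp:2} but to its $\kk$-form
\[
\cV^\rrd_\kk\otimes\cV^\rmod_\kk\to j_!\kk_{M^\circ}
\]
provided by Assumption~\ref{hyp:k}. Since $M$ is compact and orientable, this yields a perfect $\kk$-bilinear pairing
\[
\coH^r(M,\cV^\rrd_\kk)\otimes \coH^{m-r}(M,\cV^\rmod_\kk)\to \coH^m_c(M^\circ,\kk)\simeq \kk,
\]
equivalently a $\kk$-linear isomorphism $\coH^r(M,\cV^\rrd_\kk)\isom\coH^{m-r}(M,\cV^\rmod_\kk)^\vee$, giving the desired $\kk$-part of the morphism.

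To conclude, I would verify that the $\CC$-extension of this $\kk$-pairing coincides with $\DRpairing^r_{\rrd,\rmod}$ under the de~Rham isomorphisms $\coH^r(M,\cV^\rrd_\kk)\otimes_\kk\CC\simeq \coH^r_{\dR,\rrd}(M,\cV)$ and $\coH^{m-r}(M,\cV^\rmod_\kk)\otimes_\kk\CC\simeq \coH^{m-r}_{\dR,\rmod}(M,\cV)$. This is the only non-formal step, and it is the main obstacle; it amounts to the statement that the Verdier duality pairing produced by Proposition~\ref{prop:accperfect} is natural under extension of scalars in the coefficient ring. This reduces in turn to the naturality of the cup product and of the trace map $\coH^m_c(M^\circ,\kk)\simeq\kk$, both of which are compatible with the scalar extension $\kk\to\CC$ since the sheaf pairing of Assumption~\ref{hyp:hyp}\eqref{hyp:2} is itself obtained from the $\kk$-pairing of Assumption~\ref{hyp:k} by tensoring with $\CC$. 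The naturality of all these maps in the morphisms of $(\cV,\nabla,\gen{\cbbullet,\cbbullet})$ then ensures that the resulting isomorphism of period structures is natural.
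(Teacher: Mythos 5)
Your proposal is correct and follows essentially the same route as the paper's own (very terse) proof: both observe that the $\CC$-valued de~Rham pairing of Corollary~\ref{cor:VerdierS} is, via the de~Rham comparison isomorphism, identified with the Poincaré--Verdier duality pairing, which Assumption~\ref{hyp:k} together with Proposition~\ref{prop:accperfect} shows to be already defined and perfect over $\kk$ (something the paper records just before the lemma). Your explicit unwinding of what an isomorphism onto a dual object in $\catPer_\kk$ requires, and your identification of the compatibility of Verdier duality with the scalar extension $\kk\hookrightarrow\CC$ as the one non-formal point, are accurate and just make explicit what the paper's one-line proof compresses.
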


\begin{proof}
This follows from the compatibility of the pairing $\Qpairing$ and the Poincaré-Verdier duality pairing via the de~Rham isomorphism.
\end{proof}

\begin{prop}
There are isomorphisms of $(\CC,\kk)$-period structures
\begin{align*}
T\bigl(\coH^r_{\dR,\rmod}(M,\cV),\coH^r(M,\cV^\rmod_\kk),\per_\rmod\bigr)
&\simeq\bigl(\coH^r_{\dR,\rmod}(M,\cV),\coH_r^\rrd(M,\cV)_\kk,\Ppairing^{\rrd,\rmod}_r\bigr),\\
T\bigl(\coH^r_{\dR,\rrd}(M,\cV),\coH^r(M,\cV^\rrd_\kk),\per_\rrd\bigr)
&\simeq\bigl(\coH^r_{\dR,\rrd}(M,\cV),\coH_r^\rmod(M,\cV)_\kk,\Ppairing^{\rmod,\rrd}_r\bigr).
\end{align*}
\end{prop}

\begin{proof}
We will check the first case for example. The pairing corresponding to the dual presentation of the left-hand side amounts, according to the Poincaré-Verdier duality pairing
\[
\coH^{m-r}(M,\cV^\rrd_\kk)\otimes\coH^r(M,\cV^\rmod_\kk)\to\kk,
\]
to the period pairing
\[
\Qpairing_\kk^{m-r}:\coH^{m-r}(M,\cV^\rrd_\kk)\otimes\coH^r_{\dR,\rmod}(M,\cV)\to\CC,
\]
which corresponds
\begin{itemize}
\item
either to the complexified Poincaré-Verdier pairing via the de~Rham isomorphism
\[
\coH^r_{\dR,\rmod}(M,\cV)\!\isom\!\coH^r(M,\cV^\rmod),
\]
\item
or to the pairing $\Qpairing^{m-r}_{\rrd,\rmod}$ after complexifying the first term and applying the corresponding de~Rham isomorphism.
\end{itemize}
Let us recall that the Poincaré isomorphism \eqref{eq:ccPVell} is defined over $\kk$ (\cf\eqref{eq:ccPVellB}), so that, applying it to the first term, we obtain the period pairing $\Ppairing^{\rrd,\rmod}_r$.
\end{proof}

\let\DRpairing\Spairing
\subsection{Quadratic relations for good meromorphic flat bundles}\label{subsec:comparisonperiods}
We now consider the complex setup. Let $X$ be a compact complex manifold of complex dimension~$n$, let~$\Div$ be a divisor with normal crossings, and let $(\cV,\nabla)$ be a coherent $\cO_X(*\Div)$-module with integrable connection, endowed with a non-degenerate flat pairing $\gen{\cbbullet,\cbbullet}:\cV\otimes\cV\to\cO_X(*\Div)$. Let \hbox{$\varpi:\wt X\to X$} denote the oriented real blow-up
of $X$ along the irreducible components of $\Div$. In a local chart of $X$ with coordinates $x_1,\dots,x_n$ where~$\Div$ is defined by the equation $x_1\cdots x_\ell=0$, $\wt X$ is the space of polar coordinates $(r_1,e^{\sfi\theta_1},\dots,r_\ell,e^{\sfi\theta_\ell},x_{\ell+1},\dots,x_n)$ and the $i$-th component of $\varpi$ ($1\leq i\leq\ell$) sends $(r_i,e^{\sfi\theta_i})$ to $x_i=r_i\cdot e^{\sfi\theta_i}$ (\cf\eg\cite[\S8.2]{Bibi10} for the global construction of $(\wt X,\varpi)$). We~set $\wt\Div=\varpi^{-1}(\Div)=\partial\wt X$, and $(\wt X,\wt\Div)$ will play the role of $(M,\bM)$ in the settings~\ref{subsec:settings}.

There is a natural Cauchy-Riemann operator acting on the sheaf of $C^\infty$ functions on $\wt X$ and one defines the sheaves $\cA_{\wt X},\cA_{\wt X}^\rrd,\cA_{\wt X}^\rmod$ respectively as the subsheaves of $\cC_{\wt X}^\infty,\cC_{\wt X}^{\infty,\rrd},\cC_{\wt X}^{\infty,\rmod}$ annihilated by the Cauchy-Riemann operator (\cf\eg\cite{Bibi97}). Setting $\wt\cV=\cA_{\wt X}\otimes_{\varpi^{-1}\cO_X}\varpi^{-1}\cV$, the connection~$\nabla$ lifts to a connection $\wt\cV\to\varpi^{-1}\Omega^1_X\otimes_{\varpi^{-1}\cO_X}\wt\cV$. There is thus the corresponding notion of holomorphic de~Rham complexes with growth conditions, that we denote by $\DR^\rrd(\wt\cV,\nabla)$ and $\DR^\rmod(\wt\cV,\nabla)$.

\begin{remark}\label{rem:algDR}
By the Dolbeault lemmas (\cf Appendix \ref{app:currents}), the complex~$\DR^\rrd(\wt\cV,\nabla)$ is quasi\nobreakdash-iso\-mor\-phic to its $C^\infty$ analogue, and the same property holds for the complex $\DR^\rmod(\wt\cV,\nabla)$. Hence there is no risk of confusion in the notation. On the other hand, moderate and rapid decay de~Rham hypercohomologies of $\wt\cV$ on $\wt X$ can be identified with analytic de~Rham hypercohomologies on $X$ of~$\cV$ and of the dual $\cD_X$-module $\cV(!\Div)$ respectively (\cf\cite[Chap.\,5]{Mochizuki10}). Under these identifications, the pairing $\wt\DRpairing^r_{\rrd,\rmod}$ defined in Convention \ref{conv:pairing} below induces a pairing $\DRpairing^r$ between $\coH^r(X,\DR(\cV(!\Div))$ and $\coH^{2n-r}(X,\DR(\cV))$. Furthermore, if $X$ is projective, $(\cV,\nabla,\langle\cbbullet,\cbbullet\rangle)$ is the analytification of an $\cO_{X^\alg}(*\Div^\alg)$-module with connection $(\cV^\alg,\nabla^\alg)$ equipped with a non-degenerate pairing $\langle\cbbullet,\cbbullet\rangle$ (this follows from GAGA and the definition of a connection as in \cite[\S I.2.3]{Deligne70}). Setting $U=X^\alg\moins\nobreak \Div^\alg$ and $V=\cV^\alg|_U$, then in each degree $r$ the pairing $\DRpairing^r$ is the analytification of a pairing (for which we use Zariski topology)
\[
\DRpairing^r:\coH_{\dR,\rc}^r(U,V)\otimes_\CC\coH_{\dR}^{2n-r}(U,V)\to\CC,
\]
which is thus non-degenerate. In Section \ref{sec:dimone}, where $X$ is algebraic of dimension one, we provide an algebraic computation of~$\DRpairing^1$.
\end{remark}

\subsubsection*{The goodness property}
Let $x$ be a point in $\Div$, and let $(x_1,\dots,x_n)$ be local complex coordinates of~$X$ centered at $x$ such that the germ $\Div_x$ is given by $x_1\cdots x_\ell=0$ ($\ell\leq n$). Let $\cV_{\wh x}$ be the formal germ of $\cV$ at $x$, which is an $\cO_{\wh x}(*\Div)$-module ($\cO_{\wh x}(*\Div)=\CC\lcr x_1,\dots,x_n\rcr[1/x_1\cdots x_\ell]$) of finite type with the integrable connection induced by $\nabla$. We say that $(\cV,\nabla)$ is \emph{good} at~$x$~if
\begin{enumeratea}
\item
there exists a finite ramification
\[
\rho_x:(x'_1,\dots,x'_\ell,x_{\ell+1},\dots,x_n)\mto(x_1=x^{\prime p}_1,\dots,x_\ell=x^{\prime p}_\ell,x_{\ell+1},\dots,x_n)
\]
such that the pullback $\rho_x^*(\cV_{\wh x},\nabla)$ decomposes as the direct sum of free $\cO_{\wh x'}(*\Div')$-modules of finite rank with an integrable connection of the form $\nabla=\rd\varphi+\nabla^\reg$, where $\nabla^\reg$ has a regular singularities along $\Div'=\rho_x^{-1}\Div$ and $\varphi$ belongs to a finite family $\Phi_x\subset\cO_{X',x}(*\Div')/\cO_{X',x}$,
\item
the finite family $\Phi_x$ is \emph{good}, in the sense that, for any $\varphi\neq\psi$ in $\Phi_x\cup\{0\}$, the divisor of the meromorphic function $\varphi-\psi$ is negative.\footnote{Goodness usually involves $\Phi_x$ rather than $\Phi_x\cup\{0\}$. This stronger condition is needed for Assumptions \ref{hyp:hyp} to be satisfied.}
\end{enumeratea}
We say that $(\cV,\nabla)$ is \emph{good} if it is good at any $x\in X$. In particular, $\cV$ is locally $\cO_X(*\Div)$-free. Two important results should be emphasized.
\begin{enumerate}
\item\label{enum:dimone}
If $n=\dim X=1$, then any $(\cV,\nabla)$ is good. This is known as the Levelt-Turrittin theorem, which has various different proofs (\cf\eg\cite{Wasow65,Levelt75,Robba80,B-V83,D-M-R07,Malgrange91,S-vdP01,A-B01b}).
\item\label{enum:dimn}
If $n\geq2$, goodness may fail, but there exists a finite sequence of complex blow-ups over a sufficiently small analytic neighbourhood of $x$, or over~$X$ if~$X$ is quasi-projective, that are isomorphisms away from $\Div$ and such that the pullback of $(\cV,\nabla)$ is good along the pullback of $\Div$, which can be made with normal crossings. This is the Kedlaya\nobreakdash-Mochizuki theorem (\cf \cite{Bibi97} for $n=2$ and rank $\leq5$, \cite{Mochizuki07b} for $n=2$ in the projective case, \cite{Kedlaya09} for $n=2$ in the local formal and analytic cases, \cite{Mochizuki08} for $n\geq2$ in the projective case, \cite{Kedlaya10} for $n\geq2$ in the local formal and analytic cases, and also in the projective case).
\end{enumerate}

\begin{thm}\label{th:good}
If $(\cV,\nabla)$ is good, then Assumptions \ref{hyp:hyp} and \ref{ass:simplicial} hold for the rapid decay and moderate growth de~Rham complexes of $(\wt\cV,\nabla)$.
\end{thm}

\begin{convention}\label{conv:pairing}
In the case of complex manifolds, in order to make formulas independent of the choice of a square root $\sfi$ of $-1$,
we replace integration $\int_X$ of a form of maximal degree with the trace $\tr_X=(1/2\pii)^n\cdot\int_X$. The global de~Rham pairing $\Qpairing^r_\rmid$ in degree $r$ on $\wt X$ will thus be replaced with the normalized de~Rham pairing, \eg
\[
\wt\DRpairing^r_{\rrd,\rmod}=(1/2\pii)^n\Qpairing^r_{\rrd,\rmod},
\]
which pairs $\coH^r(\wt X,\DR^\rrd(\wt\cV))$ and $\coH^{2n-r}(\wt X,\DR^{\rmod}(\wt\cV))$ and is thus perfect. We~define its ``middle'' version $\wt\DRpairing^r_\rmid$ as in Section \ref{subsec:middlequadrel}, that we can interpret in terms of hypercohomologies computed on $X$ as in Remark \ref{rem:algDR}, in which case we denote it by $\DRpairing^r_\rmid$.
\end{convention}

Let us for example consider the quadratic relations in the middle dimension~$n$ if $\gen{\cbbullet,\cbbullet}$ is $\pm$-symmetric.

\begin{cor}\label{cor:good}
Assume that $X$ is compact of dimension $n$. The quadratic relations for the matrices of the Betti period pairings
\[
\pm(-2\pii)^n\,\Bpairing^\rmid_n=\Ppairing^{\rmid}_n\cdot(\DRpairing_\rmid^n)^{-1}\cdot{}^t\Ppairing^{\rmid}_n
\]
hold for a good meromorphic flat bundle on $(X,D)$, and the Betti intersection pairing can be computed with a suitable simplicial decomposition of $(\wt X,\wt D)$ as in Proposition \ref{prop:computationBetti}.
\end{cor}

\begin{proof}[Proof of Theorem \ref{th:good}]
Let us start with Assumption \ref{hyp:hyp}\eqref{hyp:1}. In dimension one, the rapid decay case is the Hukuhara-Turrittin asymptotic expansion theorem (\cf\cite{Wasow65} and \cite[App.\,1]{Malgrange91}). In~dimen\-sion $\geq2$, the proof is essentially a consequence of the work of Majima \cite{Majima84} on asymp\-totic analysis (\cf also \cite[App.]{Bibi93}, \cite[Chap.\,20]{Mochizuki08}). The rapid decay case is proved in \cite{Bibi93} and the moderate case in \cite[App.]{Hien07} with the assumption $n\leq2$, but the argument extends to any $n\geq2$ (\cf\cite[Prop.\,1]{Hien09}); both cases are also proved in \cite[Prop.\,5.1.3]{Mochizuki10}. The proof that Assumption \ref{hyp:hyp}\eqref{hyp:2} holds is obtained by means of the local description below (\cf \eg the proof in dimension one of Proposition~\ref{prop:tildeduality}).

Due to the aforementioned theorems, the local structure of $\cH^0\DR^\rrd(\wt\cV)=\cV^\rrd$ and of $\cH^0\DR^\rmod(\wt\cV)=\cV^\rmod$ is easy to understand. Let $\Delta(\varepsilon)$ denote a disc of radius $\varepsilon$. Near $x\in \Div$ as above, the real blow-up $\wt X$ is diffeomorphic to the product $(S^1)^\ell\times[0,\varepsilon)^\ell\times\Delta(\varepsilon)^{n-\ell}$ for some $\varepsilon>0$. In this model, denoting by $(r_1,\dots,r_\ell)$ the coordinates on $[0,\varepsilon)^\ell$, $\wt\Div$ is described as~$r_1\cdots r_\ell=0$, giving it the structure of a real analytic set. The structure of manifold with corners is then clear.

From the set~$\Phi_x$ of exponential factors defined after some ramification
\[
\rho_x:(x',x'')=(x'_1,\dots,x'_\ell,x_{\ell+1},\dots,x_n)\mto(x^{\prime a_1}_1,\dots,x^{\prime a_\ell}_\ell,x_{\ell+1},\dots,x_n),
\]
one constructs a local real semi-analytic stratification of $\wt\Div$: any meromorphic function $\varphi\neq0$ in $\Phi_x$ can be written in the ramified coordinates $(x',x'')$
as
$\varphi=u_\varphi\prod_{i=1}^\ell x_i^{\prime -q_i(\varphi)}(1+h(x',x''))$,
with $q_i(\varphi)$ non-negative integers,
$u_\varphi=u_\varphi(x'')$ holomorphic and invertible, and $h(x',x'')$ holomorphic.
Then we can write $u_\varphi(x'')=r_\varphi(x'')e^{\sfi a_\varphi(x'')}$, where $r_\varphi,a_\varphi$ are real analytic functions of their arguments, and
\[
\varphi\circ\rho_x=r_\varphi\prod_{i=1}^\ell r_i^{\prime -q_i(\varphi)}\cdot
\exp\bigl[\sfi(a_\varphi-\textstyle\sum_iq_i(\varphi)\theta'_i)\bigr]\cdot(1+h(r'e^{\sfi\theta'},x'')).
\]
We consider the sets defined by
\[
\cos\bigl(a_\varphi-\textstyle\sum_iq_i(\varphi)\theta'_i\bigr)=0,
\]
and their projections $S_\varphi$ to $\wt\Div$, which are real semi-analytic subsets of $\wt\Div$ (the multi-dimensional Stokes directions); the desired stratification is any real semi-analytic stratification compatible with all these subsets. We also denote by $S_\varphi^{\leq0}$ (\resp $S_\varphi^{>0}$) the corresponding subsets defined with $\leq0$ (\resp $>0$).

In the neighbourhood of any $\wt x\in\varpi^{-1}(x)$, $\cV^\rrd|_{\wt\Div}$ decomposes as $\bigoplus_\varphi\cV^\rrd_\varphi$, where $\cV^\rrd_\varphi{}$ is zero if $\varphi=0$ and otherwise constant on $S_\varphi^{>0}$ and zero on $S_\varphi^{\leq0}$. A~similar property holds for~$\cV^\rmod|_{\wt\Div}$, with the only change that $\cV^\rmod_\varphi$ is constant if $\varphi=0$. In particular, $\cV^\rrd$ and $\cV^\rmod$ are locally~$\RR$\nobreakdash-constructible in the sense of \cite[Def.\,8.4.3]{K-S90} (we can use a local stratification as above to check local $\RR$\nobreakdash-constructibility).
In this local setting, $\cV^\rrd$ and $\cV^\rmod$, together with any simplicial structure compatible with any real semi-analytic stratification with respect to which they are $\RR$-constructible,\footnote{It is standard that such a simplicial structure exists, \cf\eg\cite[Prop.\,8.2.5]{K-S90}.} satisfy thus Assumption \ref{ass:simplicial}.

In order to conclude that Assumption \ref{ass:simplicial} holds in the global setting, we note that the pair $(\wt X,\wt D)$ comes equipped with a real semi-analytic structure which induces the previous one in each local chart adapted to $\Div$, and that $\cV^\rrd$ and $\cV^\rmod$ are $\RR$-constructible, since $\RR$\nobreakdash-constructibility is a local property on $\wt\Div$, according to \cite[Th.\,8.4.2]{K-S90}. Then the same conclusion as in the local setting holds.
\end{proof}

\section{Algebraic computation of de Rham duality in dimension one}\label{sec:dimone}

In this section, we restrict to the case of complex dimension one and we make explicit in algebraic terms the results obtained in Section \ref{subsec:comparisonperiods}.

\subsection{Setting, notation, and objectives}\label{subsec:settings-one}
Let $X$ be a connected smooth complex projective curve, let $\Div$ be a \emph{non-empty} finite set of points in~$X$,
and let us define \hbox{$j:U=X\moins\Div\hto X$} as the complementary inclusion,
so that $U$ is affine. Depending on the context, we work in the Zariski topology or the analytic topology on $U$ and $X$, and hence $\cO_U$ will have the corresponding meaning. We denote by $\cO_{\wh\Div}$ the structure sheaf of the formal neighbourhood of~$\Div$ in~$X$, so~that $\cO_{\wh\Div}=\bigoplus_{x\in\Div}\cO_{\wh x}$ and $\cO_{\wh x}\simeq\CC\lcr z\rcr$ for some local coordinate $z$ centered at $x$. We~fix an affine neighbourhood $U_\Div$ of~$\Div$ in $X$. We set $U_\Div^\circ=U_\Div\moins\Div=U\cap U_\Div$ and we denote by~$\iota_\Div$ the restriction functor $\mathsf{Mod}(\cO(U))\to\mathsf{Mod}(\cO(U_\Div^\circ))$ attached to the inclusion $U_\Div^\circ\hto U$. We~set~$U_{\wh\Div}$ to be the formal completion of $U_\Div$ at $D$ and we define $U_{\wh\Div}^\circ$ by $\cO(U_{\wh\Div}^\circ)=\cO(U_D^\circ)\otimes_{\cO(U_D)}\cO(U_{\wh\Div})$. We~similarly denote by $\iota_{\wh\Div}$ the restriction functor $\mathsf{Mod}(\cO(U))\to\mathsf{Mod}(\cO(U_{\wh\Div}^\circ))$.

We consider a locally free $\cO_U$-module $V$ endowed with a connection~$\nabla$ together with the associated $\cO_X(*\Div)$-module with connection $j_*V$ (one also finds the notation $j_+V$ in the literature) that we denote by $\cV$.
It is thus a left module over the sheaf $\cD_X$ of algebraic differential operators on $X$, and thereby endowed with an action of meromorphic vector fields $\Theta_X(*\Div)$ with poles along $\Div$.

On the other hand,
let $(V^\vee,\nabla)$ be the dual bundle with connection and set $\cV^\vee=j_*V^\vee$. The dual $\cD_X$-module $\bD(\cV^\vee)$ is a holonomic $\cD_X$-module that we denote by $\cV(!\Div)$ (one also finds the notation $j_!V$ or $j_\dag V$ in the literature).
There exists a natural morphism $\cV(!\Div)\to \cV$
whose kernel~$K$ and cokernel $C$ are supported on~$\Div$ and whose image is denoted by $\cV(!{*}\Div)$.

We deduce natural morphisms between the associated algebraic de~Rham complexes\enlargethispage{2\baselineskip}%
\[
\DR \cV(!\Div)\to\DR \cV(!{*}\Div)\quad\text{and}\quad\DR \cV(!{*}\Div)\to\DR \cV.
\]
We denote the hypercohomologies on $X$ of $\DR \cV(!\Div)$, $\DR \cV(!{*}\Div)$, and $\DR \cV$ respectively by $\coH^*_{\dR,\rc}(U,V)$, $\coH^*_{\dR,\rmid}(U,V)$, and $\coH^*_{\dR}(U,V)$.

We note that $\DR K$ and $\DR C$ have non-zero cohomology in degree one at most and are supported on $\Div$, whence exact sequences
\[
\begin{gathered}
\bH^1(\Div,\DR K)\to \coH^1_{\dR,\rc}(U,V)\to \coH^1_{\dR,\rmid}(U,V)\to0\\
0\to \coH^1_{\dR,\rmid}(U,V)\to \coH^1_{\dR}(U,V)\to\bH^1(\Div,\DR C).
\end{gathered}
\]
In particular, we obtain an identification
\begin{equation}\label{eq:Hmid}
\coH^1_{\dR,\rmid}(U,V)=\image[\coH^1_{\dR,\rc}(U,V)\rightarrow \coH^1_{\dR}(U,V)].
\end{equation}

Moreover, when working in the analytic topology, one can define the analytic de~Rham complex $\DR^\an\cV$, which has constructible cohomology. Since $X$ is compact, it can be deduced from the GAGA theorem that the natural morphism $\DR\cV\to\DR^\an\cV$ induces an isomorphism in hypercohomology (and similarly for $\cV(!\Div)$). This enables us to identify the algebraic de Rham cohomology with the analytic one. We will not use the exponent `$\an$' when the context is clear and we will use this GAGA theorem without mentioning it explicitly.

If $(V,\nabla)=(\cO_U,\rd)$, one knows various forms of $\DR (\cO_X(!\Div))$. Namely, $\DR (\cO_X(!\Div))$ is quasi\nobreakdash-isomorphic to one of the following complexes:
\begin{itemize}
\item
$\mathrm{Cone}\bigl(\DR\cO_X(*\Div)\to\iota_{\Div}\DR\cO_{\wh\Div}\bigr)[-1]$,
\item
$\cO_X(-\Div)\to\Omega^1_X$,
\end{itemize}
and, for the sake of computing cohomology, one can replace these complexes with their analytic counterparts. Another form proves useful:
it is obtained by means of the rapid decay de Rham complex defined on the real-oriented blow-up $\wt X$ of $X$ at each point of $\Div$ (the one-dimensional version of the rapid decay de~Rham complex considered in Section \ref{subsec:comparisonperiods}, \cf Section~\ref{subsec:sheavesrealblowup} for details).

The first goal of this section is to explain similar presentations for $\DR\cV(!\Div)$ and, correspondingly, the presentation of $\DR\cV$ in terms of the moderate de~Rham complex on~$\wt X$. Furthermore, we express the natural de~Rham pairing between these complexes in terms of the analytic de~Rham pairing $\wt\Qpairing_{\rrd,\rmod}$ on the real blow-up space $\wt X$ at $\Div$. This approach is reminiscent of that of \cite{Mochizuki10} in any dimension (\cf Cor.\,5.2.7 from \loccit\ and Remark \ref{rem:algDR}).

Theorem \ref{th:duality} gives a residue formula for this de Rham pairing~$\DRpairing$. It is a generalization to higher rank of a formula already obtained by Deligne \cite{Deligne84b} (\cf also \cite{Deligne06}) in rank one.

Once this material is settled, we translate into this setting the general results on quadratic relations obtained in Section \ref{subsec:comparisonperiods}, with the de~Rham cohomology (\resp with compact support) instead of moderate (\resp rapid decay) cohomology. Also, we focus here our attention on the middle extension (co)homology.

\subsection{\texorpdfstring{\v{C}ech}{Cech} computation of de~Rham cohomologies}
Recall that, since $U$ is affine, the de~Rham cohomology of $(V,\nabla)$ is computed as the cohomology of the complex
\[
\Gamma(U,V)\To{\nabla}\Gamma(U,\Omega^1_U\otimes V).
\]
Hence, any element of $\coH^1_{\dR}(U,V)$ is represented by an element of \hbox{$\Gamma(U,\Omega^1_U\otimes V)$} modulo the image of $\nabla$.

We will compute the de~Rham cohomology
in terms of a \v{C}ech complex relative to the covering $(U_\Div,U)$,
whose differential is denoted by $\delta$.
So for a sheaf $F$ of $\cO_X$-modules
and $(f,\varphi)\in F(U_\Div)\oplus F(U)$,
we have $\delta(f,\varphi)=\iota_\Div\varphi-f\in F(U_\Div^\circ)$. We will write $\cV_\Div=\Gamma(U_\Div,\cV)$ and
implicitly identify $\Gamma(U_\Div^\circ,\iota_\Div V)$ with $\cV_\Div$.

Replacing $X$ with the formal neighbourhood $\wh\Div$ of $\Div$ in $X$ gives rise to \hbox{$\cV_{\wh\Div}=\cO_{\wh\Div}\otimes_{\cO_X}\cV$}, which is an $\cO_{\wh\Div}(*\Div)$-module with connection, and hence endowed with an action of the formal vector fields $\Theta_{\wh\Div}$.

\begin{prop}\label{prop:Cech!}
The complex $\DR \cV(!\Div)$ is quasi-isomorphic
to the $(-1)$-shifted cone of the natural morphism $\DR\cV\to\DR\cV_{\wh\Div}$,
that is, to the simple complex associated with the double complex
\[
\xymatrix@R=.7cm{
j_*V\ar[r]^-{\iota_{\wh\Div}}\ar[d]_\nabla&\cV_{\wh\Div}\ar[d]^{\wh\nabla}\\
j_*(\Omega^1_U\otimes V)\ar[r]^-{\iota_{\wh\Div}}&\Omega^1_{\wh\Div}\otimes \cV_{\wh\Div}.
}
\]
The complex $\bR\Gamma(X,\DR \cV(!\Div))$
is quasi-isomorphic to the simple complex associated with the double complex
\[
\xymatrix@R=.7cm{
\Gamma(U,V)\ar[r]^-{\iota_{\wh\Div}}\ar[d]_\nabla&\cV_{\wh\Div}\ar[d]^{\wh\nabla}\\
\Gamma(U,\Omega^1_U\otimes V)\ar[r]^-{\iota_{\wh\Div}}&\Omega^1_{\wh\Div}\otimes \cV_{\wh\Div}.
}
\]
\end{prop}

\begin{proof}
Any holonomic $\cD_X$\nobreakdash-module~$N$ is endowed with a canonical exhaustive increasing filtration by coherent $\cD_X(\log\Div)$-submodules $N_\bbullet$ indexed by~$\ZZ$, called the \emph{Kashiwara-Malgrange filtration} (\cf\eg\cite[Chap.\,I,\,Prop.\,6.1.2]{Bibi90b}) such that
\begin{itemize}
\item
$N_{-k}=\cO(-k\Div)N_0$ for any $k\geq0$, and $N_{k+1}=N_k+\Theta_XN_k$ for any $k\geq1$,
\item
for each $k\in\ZZ$,
the eigenvalues of $\Res\nabla$ on $N_k/N_{k-1}$ at each point of $\Div$ have their real part in $[k,k+1)$.
\end{itemize}
For each $k$, the connection $\nabla$ defines a connection $\nabla:N_k\to\Omega^1_X\otimes N_{k+1}$. The above properties imply that, for $k\neq0$, the induced morphism on graded spaces $\nabla:\gr_kN\to\Omega^1_X\otimes\gr_{k+1}N$ is bijective, so in particular the natural inclusion morphism is a quasi-isomorphism:
\begin{equation}\label{eq:VN}
\{N_0\To{\nabla}\Omega^1_X\otimes N_1\}\overset\sim\hto\DR N.
\end{equation}
For example, the following also holds:
\begin{itemize}
\item
If $N_{|U}=V$, then the natural morphism $N_k\to \cV_k$ is an isomorphism for any $k\leq0$.
\item
The formation of the Kashiwara-Malgrange filtration is compatible with tensoring with~$\cO_\Div$ and $\cO_{\wh\Div}$ and, for any $k$, $N/N_k\simeq N_{\Div}/N_{\Div,k}\simeq N_{\wh\Div}/N_{\wh\Div,k}$.
\item
For $\cV=j_*V$, we have $\cV_k=\cO(k\Div)\cV_0$ for any $k\in\ZZ$.
\item
Among all $N$'s such that $N_{|U}=V$, the $\cD_X$-module $\cV(!\Div)$ is characterized by the property that $\nabla:\gr_0N\to\Omega^1_X\otimes\gr_1N$ is bijective.
\end{itemize}
By the third point, the left-hand complex in \eqref{eq:VN} for $\cV$ reads\vspace*{-3pt}
\[
\{\cV_0\To{\nabla}\Omega^1_X(\Div)\otimes \cV_0\},
\]
so~\eqref{eq:VN} amounts to the quasi-isomorphism
\begin{equation*}
\{\cV_0\To{\nabla}\Omega^1_X(\Div)\otimes \cV_0\}\overset\sim\hto\DR\cV.
\end{equation*}

On the other hand, the last point implies that the inclusion of complexes\vspace*{-3pt}
\[
\{\cV(!\Div)_{-1}\To{\nabla}\Omega^1_X\otimes \cV(!\Div)_0\}\hto\DR \cV(!\Div)
\]
is a quasi-isomorphism. The left-hand complex reads $\{\cV_{-1}\To{\nabla}\Omega^1_X\otimes \cV_0\}$ according to the second point, so we obtain a quasi-isomorphism\vspace*{-3pt}
\begin{equation*}
\{\cV_{-1}\To{\nabla}\Omega^1_X(\Div)\otimes \cV_{-1}\}\isom\DR \cV(!\Div).
\end{equation*}
The \v{C}ech resolution of the complex on the left
is the simple complex associated with the double complex\vspace*{-3pt}
\[
\xymatrix{
\cV_{\Div,-1}\oplus j_*V\ar[r]^-{\delta}\ar[d]_\nabla&\cV_{\Div}\ar[d]^{\nabla}\\
\bigl(\Omega^1_{U_\Div}(\Div)\otimes \cV_{\Div,-1}\bigr)\oplus j_*\bigl(\Omega^1_U\otimes V\bigr)\ar[r]^-{\delta}&\Omega^1_{U_\Div}(D)\otimes \cV_{\Div}.
}
\]
Since $\cV_{\Div,-1}\subset \cV_\Div$, this complex is quasi-isomorphic to
\[
\begin{array}{c}
\xymatrix@C=.6cm{
j_*V\ar[r]^-{\iota_\Div}\ar[d]_\nabla&\cV_{\Div}/\cV_{\Div,-1}\ar[d]^{\nabla}\\
j_*(\Omega^1_U\otimes V)\ar[r]^-{\iota_\Div}&\Omega^1_{U_\Div}(\Div)\otimes (\cV_{\Div}/\cV_{\Div,-1})
}
\end{array}
=
\begin{array}{c}
\xymatrix@C=.6cm{
j_*V\ar[r]^-{\iota_{\wh\Div}}\ar[d]_\nabla&\cV_{\wh\Div}/\cV_{\wh\Div,-1}\ar[d]^{\nabla}\\
j_*(\Omega^1_U\otimes V)\ar[r]^-{\iota_{\wh\Div}}&\Omega^1_{\wh\Div}(\Div)\otimes (\cV_{\wh\Div}/\cV_{\wh\Div,-1}).
}
\end{array}
\]
Reversing the reasoning, it is also isomorphic to the \v{C}ech complex
\begin{equation*}
\begin{array}{c}
\xymatrix{
\cV_{\wh\Div,-1}\oplus j_*V\ar[r]^-{\delta}\ar[d]_\nabla&\cV_{\wh\Div}\ar[d]^{\wh\nabla}\\
\bigl(\Omega^1_{\wh\Div}(\Div)\otimes \cV_{\wh\Div,-1}\bigr)\oplus j_*\bigl(\Omega^1_U\otimes V\bigr)\ar[r]^-{\delta}&\Omega^1_{\wh\Div}\otimes \cV_{\wh\Div}.
}
\end{array}
\end{equation*}
We claim that the morphism $\nabla:\cV_{\wh\Div,-1}\to\Omega^1_{\wh\Div}(\Div)\otimes \cV_{\wh\Div,-1}$ is bijective. Indeed, recall that $\cV_{\wh\Div}$ has a canonical decomposition (\cf \cite[Th.\,(2.3),\,p.\,51]{Malgrange91})
\begin{equation*}
\cV_{\wh\Div}=\cV_{\wh\Div}^\reg\oplus \cV_{\wh\Div}^\irr
\end{equation*}
into its regular and irregular components and that the Kashiwara-Malgrange filtration $\cV_{\wh\Div,\bbullet}$ decom\-poses accordingly. Moreover, the filtration $\cV_{\wh\Div,\bbullet}^\irr$ is constant and equal to $\cV_{\wh\Div}^\irr$, and we know that
\[
\wh\nabla:\cV_{\wh\Div}^\irr\to\Omega^1_{\wh\Div}\otimes \cV_{\wh\Div}^\irr
\]
is bijective (use for example ii) in \loccit\ with $M'\!=\!\cO_{\wh\Div}$ and $M''\!=\!\cV_{\wh\Div}^\irr$). On the other hand, for the regular holonomic part,
\[
\wh\nabla:\cV_{\wh\Div,-1}^\reg\to\Omega^1_{\wh\Div}(\Div)\otimes \cV_{\wh\Div,-1}^\reg
\]
is bijective: indeed, by the very definition of the Kashiwara-Malgrange filtration, this property holds true for the morphism induced by $\wh\nabla$ on $\cV_{\wh\Div,-1}/\cV_{\wh\Div,-1-k}$ for each $k\geq1$, hence on the projective limit; for $\cV_{\wh\Div,-1}^\reg$, this projective limit is isomorphic to $\cV_{\wh\Div,-1}^\reg$ itself, as follows \eg from \cite[Chap.\,I,\,Lem.\,6.2.6]{Bibi90b}.

It follows from this claim that $\DR \cV(!\Div)$ is quasi-isomorphic to the complex given in the proposition, and the statement for $\bR\Gamma(X,\DR \cV(!\Div))$ results immediately.
\end{proof}

\begin{cor}
The formal de Rham complex $\DR_{\wh D}\cV(!\Div)$ is quasi-isomorphic to zero.
\end{cor}

\begin{proof}
Indeed, $\DR_{\wh D}\cV(!\Div)$ is the simple complex
associated with the double complex
$\DR_{\wh D}\cV\To{\id}\DR_{\wh D}\cV$.
\end{proof}

\begin{cor}\label{cor:represHdRc}
The de~Rham cohomology space with compact support $\coH^1_{\dR,\rc}(U,V)$ consists of pairs
\[
(\wh m,\omega)\in \cV_{\wh\Div}\oplus\Gamma(U,\Omega^1_U\otimes V)
\]
satisfying $\wh\nabla\wh m=\iota_{\wh\Div}\omega$, modulo pairs of the form $(\iota_{\wh\Div}(v),\nabla v)$ for $v\in\Gamma(U,V)$.\qed
\end{cor}

\begin{remark}\label{rem:H1c}
The natural morphism $\coH^1_{\dR,\rc}(U,V)\to \coH^1_{\dR}(U,V)$ is described in terms of the previous representatives as $(\wh m,\omega)\mto\omega$.

\step\label{rem:H1c2}
The image $\coH^1_{\dR,\rmid}(U,V)$ of this morphism (\cf \eqref{eq:Hmid}) consists of classes of sections \hbox{$\omega\in\Gamma(U,V)$} such that $\iota_{\wh\Div}\omega\in\image(\wh\nabla:\cV_{\wh\Div}\to\cV_{\wh\Div})$. Therefore, any family consisting of $\dim \coH^1_{\dR,\rmid}(U,V)$ linearly independent classes $[\omega]$ in $\coH^1_{\dR}(U,V)$ for which there exists $\wh m\in \cV_{\wh\Div}$ satisfying $\wh\nabla\wh m=\iota_{\wh\Div}\omega$ is a basis of $\coH^1_{\dR,\rmid}(U,V)$.

\step\label{rem:H1c1}
The kernel of this morphism consists of pairs of the form $(\wh m,\nabla w)$ for some $w\in\Gamma(U,V)$, modulo pairs $(\iota_{\wh\Div}(v),\nabla v)$. Each class has thus a representative of the form $(\wh m,0)$ with~$\wh\nabla\wh m=0$, and there is a surjective morphism
\[
\ker\bigl[\wh\nabla:\cV_{\wh\Div}\ra \cV_{\wh\Div}\bigr]\to\hspace*{-6.3mm}\to\ker\Bigl[\coH^1_{\dR,\rc}(U,V)\ra \coH^1_{\dR}(U,V)\Bigr].
\]

\step\label{rem:H1c3}
If $V$ has no constant subbundle with connection (\eg if $V$ is irreducible and non-constant), then a representative $(\wh m,0)$ with $\wh\nabla\wh m=0$ is unique. We thus conclude in such a case that
\[
\ker\Bigl[\wh\nabla:\cV_{\wh\Div}\ra \cV_{\wh\Div}\Bigr]\simeq\ker\Bigl[\coH^1_{\dR,\rc}(U,V)\ra \coH^1_{\dR}(U,V)\Bigr].
\]

\step\label{rem:H1c4}
With the assumption in \eqref{rem:H1c3}, a basis of $\coH^1_{\dR,\rc}(U,V)$ can thus be obtained as the union of the following sets:
\begin{itemize}
\item
a basis of $\ker\wh\nabla$ in $\cV_{\wh\Div}$,
\item
a family of $\dim \coH^1_{\dR,\rmid}(U,V)$
representatives $(\wh m,\omega)$ for which the classes of~$\omega$ in $\coH^1_{\dR}(U,V)$ are linearly independent (and thus form a basis of $\coH^1_{\dR,\rmid}(U,V)$).
\end{itemize}
\end{remark}

\begin{remark}[$C^\infty$ computation]\label{rem:DRinfty}
We consider the sheaves
\[
\cV_\infty=\cC_X^\infty\otimes_{\cO_X}\cV\quad\text{and}\quad\cV(!\Div)_\infty=\cC_X^\infty\otimes_{\cO_X}\cV(!\Div)
\]
and the corresponding $C^\infty$ de~Rham complexes $\DR^\infty(\cV)$ and $\DR^\infty(\cV(!\Div))$ with differential $\nabla+\ov\partial$.
These complexes are fine resolutions of
$\DR^\an(\cV)$ and $\DR^\an(\cV(!\Div))$. We also consider the sheaves of $C^\infty$ functions on $U$ having respectively moderate growth and rapid decay at the points of $\Div$, defined in a way similar to that of Definition \ref{def:rdmod}.
The Dolbeault complex $(\cE_X^{\rmod,\cbbullet},\ov\partial)$ (\cf Appendix~\ref{app:currents}) is a resolution of $\cO_X(*\Div)$.
Hence, the complex
\[
\DR^{\infty,\rmod}(\cV)=(\cE_X^{\rmod,\cbbullet}\otimes\cV,\nabla+\ov\partial)
\]
is a fine resolution of $\DR^\an\cV$. Therefore, there is an isomorphism
\[
\coH^1_{\dR}(U,V)\simeq \coH^1(\Gamma( X,\DR^{\infty,\rmod}(\cV)))
\]
induced by
\begin{multline*}
\Gamma(U,\Omega^1_U\otimes V)=\Gamma(X,\Omega^1_X(*\Div)\otimes\cV)\ni\omega\\
\mto\omega_\infty=\omega\in\Gamma(X,\cE_X^{\rmod,(1,0)}\otimes\cV)\subset\Gamma(X,\cE_X^{\rmod,1}\otimes\cV).
\end{multline*}

On the other hand, $\DR^\infty(\cV(!\Div))$ is quasi-isomorphic to the double complex $\DR^\infty(\cV)\to\DR^\infty_{\wh\Div}(\cV)$. By Borel's lemma (applied to the coefficients in a basis of $\cE_X^k\otimes\cV$), this morphism is termwise surjective, hence the double complex is isomorphic to the $C^\infty$ rapid decay complex $\DR^{\infty,\rrd}(\cV)=(\cE_X^{\rrd,\cbbullet}\otimes\cV,\nabla+\ov\partial)$.
Therefore, there is an isomorphism
\begin{equation}\label{eq:represinfty}
\coH^1_{\dR,\rc}(U,V)\simeq \coH^1(\Gamma( X,\DR^{\infty,\rrd}(\cV)))
\end{equation}
described as follows. For a class $(\wh m,\omega)$ with $\nabla\wh m=\iota_{\wh\Div}\omega$, let us choose $m_\infty\in\Gamma(X,\cV_\infty)$ whose Taylor series at $\Div$ is equal to $\wh m$. As above, we also regard $\omega$ as belonging to $\Gamma(X,\cE_X^{\rmod,1}\otimes\cV)$ and we denote it by $\omega_\infty$. Then
\[
\eta_\infty=\omega_\infty-(\nabla+\ov\partial)m_\infty
\]
has rapid decay at $\Div$. Moreover, it is $(\nabla+\ov\partial)$-closed. The image of the class of $(\wh m,\omega)$ by \eqref{eq:represinfty} is the class of $\eta_\infty$.
\end{remark}

\begin{example}[The case $(V,\nabla)=(\cO_U,\rd)$]\label{ex:O!}
We are interested in $\coH^2_{\dR,\rc}(U)$. This space is identified with $\coH^2(X,\DR\cO_X(!\Div))$. By Proposition \ref{prop:Cech!} applied to \hbox{$V=\cO_U$}, we identify $\coH^2_{\dR,\rc}(U)$ with the quotient of $\Omega^1_{\wh\Div}(*\Div)$ by the subspace consisting of elements of the form $\rd\wh\varphi-\iota_{\wh\Div}\eta$, with $\wh\varphi\in\cO_{\wh\Div}(*\Div)$ and $\eta\in\Omega^1(U)$. Let us make explicit the canonical isomorphism
\[
\can:\coH^2_{\dR,\rc}(U)\isom \coH^2(\Gamma(X,\cE_X^{\rrd,\cbbullet})).
\]
For $\wh\mu\in\Omega^1_{\wh\Div}(*\Div)$, one can choose a lift $\mu_\infty\in\Gamma(X,\cE_X^{1,0}(*\Div))$. Since $\rd\wh\mu=0$, the $2$-form $\rd\mu_\infty$ has rapid decay. Then, $\can[\wh\mu]$ is the class of $-\rd\mu_\infty$.

On the one hand, let $\res_\Div:\coH^2_{\dR,\rc}(U)\to\CC$ be induced by
\[
\sum_{x\in \Div}\res_x:\Omega^1_{\wh\Div}(*\Div)\to\CC.
\]
On the other hand, we consider the trace morphism
\[
\tr_X:\frac1{2\pii}\int_X\colon\coH^2(\Gamma(X,\cE_X^{\rrd,\cbbullet}))\to\CC.
\]

The following result is standard: the morphisms $\tr_X$ and $\res_\Div$ are isomorphisms and we have
\begin{equation}\label{eq:O!}
\tr_X\circ\can=\res_\Div.
\end{equation}
To see the latter equality, we write, for a union of discs $\Delta_r$ of radius $r\!>\!0$ centered at the points of~$\Div$,
\begin{align*}
-\frac1{2\pii}\int_X\rd\mu_\infty&=-\frac1{2\pii}\int_{X\moins\Delta_r}\rd\mu_\infty-\frac1{2\pii}\int_{\Delta_r}\rd\mu_\infty\\
&=\frac1{2\pii}\int_{\partial\Delta_r}\mu_\infty-\frac1{2\pii}\int_{\Delta_r}\rd\mu_\infty.
\end{align*}
We can choose $\mu_\infty$ so that, for some $R>0$, $\mu_\infty=\wh\mu^{<0}+\nu_\infty$, where $\wh\mu^{<0}$ is the polar part of $\wh\mu$ and~$\nu_\infty$ is $C^\infty$ on $\Delta_R$.
Then, since $\de\mu_\infty$ has rapid decay, the above integral tends to $\res_D\wh\mu$
as $r\to 0$.
\end{example}

\subsection{Pairings}
Starting from the tautological pairing
\[
(\cbbullet\sep\cbbullet):V\otimes V^\vee\to\cO_U,
\]
one defines a pairing
\[
\lpr\cbbullet\sep\cbbullet\rpr:\coH^1_{\dR,\rc}(U,V)\otimes \coH^1_{\dR}(U,V^\vee)\to\coH^2_{\dR,\rc}(U)
\]
in the following way.
For representatives $(\wh m,\omega)\in\cV_{\wh\Div}\oplus\Gamma(U,\Omega^1_U\otimes V)$ and $\omega^\vee\in\Gamma(U,\Omega^1_U\otimes V^\vee)$ (\cf Corollary \ref{cor:represHdRc}), we set
\[
\lpr(\wh m,\omega)|\omega^\vee\rpr=(\wh m\sep\iota_{\wh\Div}\omega^\vee)\mod\{\rd\wh\varphi-\iota_{\wh\Div}\eta\}.
\]
Indeed, let us check that the formula does not depend on the choice of representatives of the de~Rham cohomology classes.
\begin{itemize}
\item
If $\omega^\vee=\nabla v^\vee$, we have
\[
(\wh m\sep\iota_{\wh\Div}\nabla v^\vee)=\rd(\wh m\sep\iota_{\wh\Div}v^\vee)-\iota_{\wh\Div}(\omega\sep v^\vee),
\]
which is of the form $\rd\wh\varphi-\iota_{\wh\Div}\eta$.
\item
Similarly, if $(\wh m,\omega)=(\iota_{\wh\Div}(v),\nabla v)$, then $(\iota_{\wh\Div}(v)\sep\iota_{\wh\Div}\omega^\vee)=\iota_{\wh\Div}(v\sep\omega^\vee)$.
\end{itemize}

The residue pairing $\Rpairing^1$ is defined as the composition (\cf Example \ref{ex:O!} for the isomorphism $\res_\Div$)
\[
\Rpairing^1:\coH^1_{\dR,\rc}(U,V)\otimes \coH^1_{\dR}(U,V^\vee)\To{\lpr\cbbullet\sep\cbbullet\rpr}\coH^2_{\dR,\rc}(U,\CC)\xrightarrow[\sim]{~\textstyle\res_\Div~}\CC.
\]
In other words, on representatives $(\wh m,\omega)$ and $\omega^\vee$ it is expressed by summing up the residues:
\begin{equation*}
\Rpairing^1\bigl((\wh m,\omega),\omega^\vee\bigr)=\res_\Div(\wh m\sep\iota_{\wh\Div}\omega^\vee)=\sum_{x\in\Div}\res_x(\wh m\sep\iota_{\wh x}\omega^\vee).
\end{equation*}

On the other hand, by working with the $C^\infty$ realizations of Remark \ref{rem:DRinfty}, we consider the de~Rham pairing
\begin{equation}\label{eq:Qpairing}
\Qpairing:\DR^{\infty,\rrd}(\cV)\otimes\DR^{\infty,\rmod}(\cV^\vee)\to(\cE_X^{\rrd,\cbbullet},\rd),
\end{equation}
which gives rise at the cohomology level to the pairing
\[
\DRpairing^1:\coH^1_{\dR,\rc}(U,V)\otimes \coH^1_{\dR}(U,V^\vee)\To{\Qpairing^1}\coH^2(\Gamma(X,\cE_X^{\rrd,\cbbullet}))\xrightarrow[\sim]{~\tr_{U^\an}~}\CC.
\]

\begin{prop}\label{prop:duality}
The de~Rham pairing $\DRpairing^1$ and the residue pairing $\Rpairing^1$ are equal.
\end{prop}

\begin{proof}
As explained in Remark \ref{rem:DRinfty}, the elements
\[
(\wh m,\omega)\in\cV_{\wh\Div}\oplus\Gamma(U,\Omega^1_U\otimes V)\quad\text{and}\quad\omega^\vee\in\Gamma(U,\Omega^1_U\otimes V^\vee)
\]
are sent in a natural way, by choosing a $C^\infty$ lift $m_\infty$ of~$\wh m$, respectively to
\[
(m_\infty,\omega_\infty)\in\Gamma(X,\cV_\infty)\oplus\Gamma(X,\cE_X^{\rmod
,(1,0)
}\otimes V)\quad \text{and}\quad \omega_\infty^\vee\in\Gamma(X,\cE^{\rmod
,(1,0)
}\otimes\cV^\vee),
\]
and $\omega_\infty,\omega_\infty^\vee$ are $(\nabla+\ov\partial)$-closed. Set $\eta_\infty=\omega_\infty-(\nabla+\ov\partial)m_\infty$ as before. By reasons of type we have
\[
\Qpairing(\eta_\infty,\omega_\infty^\vee)=\bigl(-(\nabla+\ov\partial)m_\infty\sep\omega_\infty^\vee\bigr)=-\rd(m_\infty\sep\omega_\infty^\vee).
\]
With the notation of Example \ref{ex:O!}, this equality reads
\[
\Qpairing(\eta_\infty,\omega_\infty^\vee)=\can(\wh m\sep\omega^\vee)=\can\lpr(\wh m,\omega)\sep\omega^\vee\rpr.
\]
The proposition now follows from \eqref{eq:O!}.
\end{proof}

By working on the real blow-up space we will also show:

\begin{thm}\label{th:duality}
The de~Rham pairing $\DRpairing^1$ is a perfect pairing.
\end{thm}

In particular, the residue pairing $\Rpairing^1$ is perfect. Before starting the proof of the theorem which will be done in Section \ref{subsec:sheavesrealblowup}, let us state its ``middle'' consequence.

\begin{cor}\label{cor:dualitymiddle}
The residue pairing $\Rpairing^1$ vanishes on $\ker\wh\nabla\otimes \coH^1_{\dR,\rmid}(U,V^\vee)$ and induces a non\nobreakdash-degenerate pairing
\[
\Rpairing^1_\rmid:\coH^1_{\dR,\rmid}(U,V)\otimes \coH^1_{\dR,\rmid}(U,V^\vee)\to\CC.
\]
Moreover, if $V\isom V^\vee$ is a $\pm$-symmetric isomorphism, it induces a $\mp$-symmetric non-degenerate pairing
\[
\coH^1_{\dR,\rmid}(U,V)\otimes \coH^1_{\dR,\rmid}(U,V)\to\CC.
\]
\end{cor}

\begin{proof}[Proof of Corollary \ref{cor:dualitymiddle}]
If $\wh\nabla\wh m\!=\!0$ and $\omega^\vee$ satisfies $\iota_{\wh\Div}\omega^\vee\!=\!\wh\nabla\wh m^\vee$ (see Remark \ref{rem:H1c}\eqref{rem:H1c2}), we~have
\[
\res_\Div(\wh m\sep\iota_{\wh\Div}\omega^\vee)=\res_\Div\rd(\wh m\sep\wh m^\vee)=0,
\]
hence the first assertion. That $\Rpairing^1_\rmid$ is non-degenerate follows from the same property for~$\Rpairing^1$. Assume now that $\langle\cbbullet,\cbbullet\rangle$ is a $\pm$-symmetric pairing on $V$ and let $\omega,\omega'$ represent classes in $\coH^1_{\dR,\rmid}(U,V)$, so that there exist $\wh m,\wh m'$ such that $\nabla\wh m=\iota_{\wh\Div}\omega$ and $\nabla\wh m'=\iota_{\wh\Div}\omega'$. Then
\begin{align*}
\res_\Div\langle\wh m,\iota_{\wh\Div}\omega'\rangle&=\res_\Div\langle\wh m,\nabla\wh m'\rangle=-\res_\Div\langle\nabla\wh m,\wh m'\rangle\\
&=\mp\res_\Div\langle\wh m',\nabla\wh m\rangle=\mp\res_\Div\langle\wh m',\iota_{\wh\Div}\omega\rangle.\qedhere
\end{align*}
\end{proof}

\begin{remark}
Theorem \ref{th:duality} is of course a variant of the theorem asserting compatibility between duality of $\cD$-modules and proper push-forward (\cf\cite{Mebkhout87}), but more in the spirit of \cite[Cor.\,5.2.7]{Mochizuki10}. The presentation given here owes much to \cite[App.\,2]{Malgrange91}. Nevertheless, the present formulation is more precise and can lead to explicit computations.

In the present form, this theorem has a long history, starting (as far as we know) with~\cite{Deligne84b} (\cf also \cite{Deligne06}) in rank one. The computation that we perform with \v{C}ech complexes by using a formal neighbourhood of $\Div$ is inspired from \loccit\ The result was used by Deligne for showing compatibility between duality and the irregular Hodge filtration introduced in \loccit

In a series of papers mentioned in the introduction, Matsumoto et~al.\ also gave duality results of this kind, either in the case of regular singularities or in rank one cases with irregular singularities (possibly in higher dimensions). An instance of Proposition \ref{prop:duality} was already obtained in \cite{Matsumoto98}.
\end{remark}

\subsection{Computations on the real blow-up space}\label{subsec:sheavesrealblowup}
In this section, we revisit the results of Section~\ref{subsec:comparisonperiods} in the simpler case where $\dim X=1$ and we give details on the proof that Assumption~\ref{hyp:hyp}\eqref{hyp:2} holds for $\wt\cV$ with respect to the tautological pairing $(\cbbullet\sep\cbbullet)$. \emph{In this section, we use the analytic topology on $U$ and~$X$}.

Let $\varpi:\wt X\to X$ denote the real oriented blow-up of $X$ at each point of~$\Div$. Recall that $\wt X$ is endowed with holomorphic and $C^\infty$ sheaves of functions with growth conditions. Working on~$\wt X$ makes proofs of local duality easier since all involved de~Rham complexes are sheaves, \ie have cohomology in degree zero only. We denote by $\wtj:U\!\hto\!\wt X$ the open inclusion and by \hbox{$\wti:\varpi^{-1}(\Div)\!\hto\!\wt X$} the complementary inclusion.

For $\cV$ as in Section \ref{subsec:settings-one}, we~set (\cf Section~\ref{subsec:comparisonperiods} for $\cA_{\wt X}$)
\[
\wt\cV=\cA_{\wt X}\otimes_{\varpi^{-1}\cO_X}\varpi^{-1}\cV=\cA_{\wt X}(*\Div)\otimes_{\varpi^{-1}\cO_X}\varpi^{-1}\cV
\]
(the second equality holds since $\cV$ is an $\cO_X(*\Div)$-module), which is a locally free $\cA_{\wt X}(*\Div)$-module of finite rank. The $C^\infty$ de~Rham complexes with growth conditions $\DR^{\infty,\rrd}(\wt\cV)$ and $\DR^{\infty,\rmod}(\wt\cV)$ have been defined in Section \ref{subsec:comparisonperiods} and satisfy Assumptions \ref{hyp:hyp}\eqref{hyp:1} (with respect to the tautological pairing $(\cbbullet\sep\cbbullet)$) and \ref{ass:simplicial}, according to Theorem \ref{th:good}. We will check in Proposition~\ref{prop:tildeduality}, as asserted in Theorem \ref{th:good}, that they also satisfy Assumption \ref{hyp:hyp}\eqref{hyp:2} with respect to the tautological pairing $(\cbbullet\sep\cbbullet)$ on $V\otimes V^\vee$ (as already mentioned, the results of Section \ref{subsec:dRquadratic} can easily be adapted to this setting).

The pushforwards by $\varpi$ of these $C^\infty$ de~Rham complexes are isomorphic to the corresponding complexes on~$X$, allowing for a computation on~$\wt X$ of the various algebraic de~Rham cohomologies, according to Remark \ref{rem:DRinfty}.

\begin{cor}
We have natural isomorphisms
\[
\coH^k_{\dR}(U,V)\simeq \coH^k(\wt X,\DR^{\infty,\rmod}(\wt\cV)),\quad \coH^k_{\dR,\rc}(U,V)\simeq \coH^k(\wt X,\DR^{\infty,\rrd}(\wt\cV)).\eqno\qed
\]
\end{cor}

Recall that we set $\wt\cV^\rrd=\cH^0\DR^{\infty,\rrd}(\wt\cV)$ and similarly with $\rmod$.

\begin{prop}\label{prop:tildeduality}
The natural pairing
\[
\wt\Qpairing_{\rrd,\rmod}^\infty:\DR^{\infty,\rrd}(\wt \cV)\otimes\DR^{\infty,\rmod}(\wt\cV^\vee)\to(\cE_{\wt X}^{\rrd,\cbbullet},\rd)\simeq\wtj_!\CC_U
\]
is perfect, that is, the pairing induced on the $\cH^0$ is perfect:
\[
\wt\cV^\rrd\otimes\wt\cV^{\vee\rmod}\to\wtj_!\CC_U.
\]
\end{prop}

\begin{proof}
As the statement is local on $\wt X$, we work on an analytic neighbourhood~$\Delta$ of $x\in\Div$ with coordinate $z$. The Levelt-Turrittin decomposition (\cf \eqref{enum:dimone} in Section~\ref{subsec:comparisonperiods}) can be lifted locally on $S^1\times\{0\}$ with coefficients in $\cA_{\wt \Delta}$, so that the ramification~$\rho$ can be neglected by considering a local determination of $z^{1/r}$ and $\log z$, where~$r$ is the order of ramification. We are in this way reduced to proving the assertions in the case of an elementary model entering the Levelt\nobreakdash-Turrittin decomposition.

Let $\theta_o\in S^1$ and let $\nb(\theta_o)=(\theta_o-\delta,\theta_o+\delta)\times[0,\varepsilon)$ be an open neighbourhood of $\theta_o$ in $\wt X$. We set $\nb(\theta_o)^\circ=\nb(\theta_o)\cap U=(\theta_o-\delta,\theta_o+\delta)\times(0,\varepsilon)$ and consider the decompositions of $\wtj$ as the composition of the open inclusions
\[
\nb(\theta_o)^\circ\Hto{\alpha^+}\nb(\theta_o)^\circ\cup\bigl((\theta_o,\theta_o+\delta)\times\{0\}\bigr)
\Hto{\beta^+}\nb(\theta_o)=\nb(\theta_o)^\circ\cup\bigl((\theta_o-\delta,\theta_o+\delta)\times\{0\}\bigr)
\]
and
\[
\nb(\theta_o)^\circ\Hto{\alpha^-}\nb(\theta_o)^\circ\cup\bigl((\theta_o-\delta,\theta_o)\times\{0\}\bigr)\Hto{\beta^-}\nb(\theta_o).
\]
For an elementary model, three kinds of cases can occur for $\wt\cV^\rrd$ and $\wt\cV^{\vee\rmod}$ on such a sufficiently small neighbourhood:
\begin{itemize}
\item
$\wt\cV^\rrd=\wtj_*V^\nabla$ and $\wt\cV^{\vee\rmod}=\wtj_!V^{\vee\nabla}$,
\item
$\wt\cV^\rrd=\wtj_!V^\nabla$ and $\wt\cV^{\vee\rmod}=\wtj_*V^{\vee\nabla}$,
\item
$\wt\cV^\rrd=\beta^+_!\alpha^+_*V^\nabla$ and $\wt\cV^{\vee\rmod}=\beta^-_!\alpha^-_*V^{\vee\nabla}$.
\end{itemize}
Perfectness being clear in the first two cases, let us analyze the third one. Poincaré-Verdier duality shows perfectness of the natural pairing
\[
(\beta^-_*\alpha^-_!V^\nabla)\otimes(\beta^-_!\alpha^-_*V^{\vee\nabla})\to\wtj_!\CC_{\nb(\theta_o)^\circ},
\]
so one only needs to show the equality of subsheaves of $\wtj_*V^\nabla$:
\[
\beta^-_*\alpha^-_!V^\nabla=\beta^+_!\alpha^+_*V^\nabla.
\]
These subsheaves clearly coincide away from $\theta_o$, and both are zero at $\theta_o$, hence they coincide.
\end{proof}

\begin{proof}[Proof of Theorem \ref{th:duality}]
By applying $\bR\varpi_*$ (which is $\varpi_*$ here), the pairing $\wt\Qpairing_{\rrd,\rmod}^\infty$ induces the pairing~$\Qpairing$ of \eqref{eq:Qpairing}. From the compatibility between Verdier duality and proper pushforward by~$\varpi$, together with Proposition \ref{prop:accperfect}, we deduce that $\DRpairing^1$ is non-degenerate.
\end{proof}

\subsection{Computation of Betti period pairings}
The formula for computing the period pairing
\[
\Ppairing^{\rrd,\rmod}_1\colon
\coH_1^{\rrd}(\wt X,\wt\cV)\otimes \coH^1_{\dR,\rmod}(\wt X,\wt\cV^\vee)\to\CC
\]
(\cf Section~\ref{subsec:Bettiquadratic}) is the natural one described as follows. Let $\sigma\in \coH_1^{\rrd}(\wt X,\wt\cV)$
be represented by a finite sum $\sum_\ell c_\ell\otimes v_\ell$ of twisted cycles where each $c_\ell\colon [0,1]\to\wt X$
is a piecewise smooth simplex satisfying $c_\ell((0,1))\subset U$
and $v_\ell$ is a horizontal section of $\cA_{\wt X}^\rrd\otimes\wt\cV$
on a neighbourhood of the support of $c_\ell$.
(The boundary $c_\ell(o)$, $o\in\{0,1\}$, may lie in $\wt\Div$.) Let $\omega^\vee\in\Gamma(U,\Omega^1_U\otimes V^\vee)$ be a representative of a de~Rham class in $\coH^1_{\dR,\rmod}(\wt X,\wt\cV^\vee)=\coH^1_{\dR}(U,V^\vee)$.
Then
\[
\Ppairing^{\rrd,\rmod}_1(\sigma,\omega^\vee)=
\sum_\ell \int_0^1\hspace*{-2mm}c_\ell^*(( v\sep\omega^\vee)).
\]

We will make explicit the formula for computing the period pairing
\[
\Ppairing^{\rmod,\rrd}_1:\coH_1^{\rmod}(\wt X,\wt\cV)\otimes \coH^1_{\dR,\rrd}(\wt X,\wt\cV^\vee)\to\CC
\]
starting from a representative $(\wh m,\omega)$ of a de~Rham class in $\coH^1_{\dR,\rrd}(\wt X,\wt\cV^\vee)=\coH^1_{\dR,\rc}(U,V^\vee)$. With respect to the above formula, there is a supplementary regularization procedure to be performed.

\begin{prop}\label{prop:calculP}\mbox{}
\begin{itemize}
\item
Let $\sigma\in \coH_1^{\rmod}(\wt X,\wt\cV)$
be represented by a finite sum $\sum_\ell c_\ell\otimes v_\ell$ of twisted cycles
where each $c_\ell\colon [0,1]\to\wt X$
is a piecewise smooth simplex satisfying $c_\ell((0,1))\subset U$
and $v_\ell$ is a horizontal section of $\cA_{\wt X}^\rmod\otimes\wt\cV$
on a neighbourhood of the support of $c_\ell$.
\item
Let
$(\wh m,\omega)\in\cV_{\wh\Div}^\vee\oplus\Gamma(U,\Omega^1_U\otimes V^\vee)$
be a representative of $[(\wh m,\omega)]\in \coH^1_{\dR,\rc}(U,V^\vee)$.
\item
For $o\in\{0,1\}$,
if $c_\ell(o) \in \wt{D}$,
let $\wt m_{\ell,o}$ be a germ in $\wt\cV^\vee_{c_\ell(o)}$
having $\wh m_{\ell,o}$ as asymptotic expansion at $c_\ell(o)$.
Otherwise let $\wt{m}_{\ell,o}=0$.
\end{itemize}
Then the following equality holds:
\[
\Ppairing^{\rmod,\rrd}_1(\sigma,[(\wh m,\omega)])
=\sum_\ell \lim_{\varepsilon\to0} \biggl[\biggl(\int_\varepsilon^{1-\varepsilon}\hspace*{-2mm}c_\ell^*(( v\sep\omega))\biggr)+(v\sep\wt m_{\ell,0})(c_\ell(\varepsilon))-(v\sep\wt m_{\ell,1})(c_\ell(1-\varepsilon)) \biggr].
\]
\end{prop}

\begin{remark}
In practice, one only needs to know the first few terms of the asymptotic expansion of~$\wt m_{\ell,o}$ in order to compute the limit.
The knowledge of $\wh m_{\ell,o}$ up to a finite order is therefore enough.
\end{remark}

\begin{proof}
We start with the $C^\infty$ setting.

\begin{lemma}\label{lem:calculPCinfty}
Let $\sigma$ be as in the proposition and let $\eta_\infty=\omega_\infty-(\nabla+\ov\partial)m_\infty$ be a representative of a class in~$\coH^1_{\dR,\rc}(U,V^\vee)$ as in Remark \ref{rem:DRinfty}. Then
\[
\Ppairing^{\rmod,\rrd}_1(\sigma,[\eta_\infty])
=\sum_\ell \lim_{\varepsilon\to0} \biggl[\biggl(\int_\varepsilon^{1-\varepsilon}\hspace*{-2mm}c_\ell^*((v\sep\omega_\infty))\biggr)+(v\sep m_\infty)(c_\ell(\varepsilon))-(v\sep m_\infty)(c_\ell(1-\varepsilon)) \biggr].
\]
\end{lemma}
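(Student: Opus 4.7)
The plan is to start from the definition of $\Ppairing^{\rmod,\rrd}_1(\sigma,[\eta_\infty])$ and to rewrite each summand by a Stokes computation on the truncated chain $c_\ell|_{[\epsilon,1-\epsilon]}$, the whole point being that the rapid decay of $\eta_\infty$ makes the \emph{unregularized} integral convergent, while the two pieces obtained after substitution $\eta_\infty = \omega_\infty - (\nabla+\ov\partial)m_\infty$ are individually divergent in general and must be balanced against the boundary terms.

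First, by the $C^\infty$ analogue of the formula recalled just before Section \ref{subsec:settings-one} (and already used in the rapid decay case above Proposition \ref{prop:calculP}), the period pairing is
\[
\Ppairing^{\rmod,\rrd}_1(\sigma,[\eta_\infty])=\sum_\ell\int_{c_\ell}(v_\ell\sep\eta_\infty),
\]
each integrand being a $C^\infty$ one-form on $c_\ell((0,1))$ with rapid decay at the two endpoints lying in $\wt\Div$, so that the integral makes sense and equals $\lim_{\epsilon\to0}\int_\epsilon^{1-\epsilon}c_\ell^*(v_\ell\sep\eta_\infty)$.

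Next, I would substitute $\eta_\infty=\omega_\infty-(\nabla+\ov\partial)m_\infty$ and use the key identity: since $v_\ell$ is horizontal for the dual connection and the tautological pairing is compatible with the connections, one has
\[
(v_\ell\sep(\nabla+\ov\partial)m_\infty)=\rd(v_\ell\sep m_\infty)
\]
on the whole of $c_\ell((0,1))$. Splitting $(v_\ell\sep\eta_\infty)=(v_\ell\sep\omega_\infty)-\rd(v_\ell\sep m_\infty)$ on the compact piece $[\epsilon,1-\epsilon]$ and applying Stokes to the exact part yields
\[
\int_\epsilon^{1-\epsilon}c_\ell^*(v_\ell\sep\eta_\infty)=\int_\epsilon^{1-\epsilon}c_\ell^*(v_\ell\sep\omega_\infty)+(v_\ell\sep m_\infty)(c_\ell(\epsilon))-(v_\ell\sep m_\infty)(c_\ell(1-\epsilon)),
\]
which is the identity to be summed over~$\ell$.

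Finally, I would take $\epsilon\to0$. The left-hand side converges by the rapid decay argument of the first step, so the right-hand side must also converge, and its limit is the claimed regularized expression. The only subtle point, and the one I expect to be the main obstacle, is bookkeeping the endpoint behaviour at points $c_\ell(o)\in\wt\Div$: one must check that the moderate growth of $v_\ell$ combined with the moderate growth of $\omega_\infty$ and $m_\infty$, together with the fact that their \emph{difference} $\omega_\infty-(\nabla+\ov\partial)m_\infty$ has rapid decay, is what forces cancellation between the possibly divergent $\int_\epsilon^{1-\epsilon}c_\ell^*(v_\ell\sep\omega_\infty)$ and the boundary evaluations $(v_\ell\sep m_\infty)(c_\ell(\epsilon)), (v_\ell\sep m_\infty)(c_\ell(1-\epsilon))$ as $\epsilon\to0$. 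Everything else is a direct application of Stokes and of the Leibniz identity for a flat pairing, and Proposition \ref{prop:calculP} itself then follows by choosing $m_\infty$ with Taylor series~$\wh m$ at $\wt\Div$ and rewriting the boundary terms using the asymptotic expansions~$\wt m_{\ell,o}$.
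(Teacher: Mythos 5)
Your proof is correct and follows essentially the same route as the paper's: reduce, via the rapid decay of $\eta_\infty$, to a limit of truncated integrals, then use horizontality of $v_\ell$ together with the flatness of the pairing to write $c_\ell^*(v_\ell\sep(\nabla+\ov\partial)m_\infty)=\rd\,c_\ell^*(v_\ell\sep m_\infty)$ and apply the fundamental theorem of calculus on $[\epsilon,1-\epsilon]$. One minor remark: the ``subtle point'' you flag at the end is not actually an obstacle. The truncated identity is exact for every $\epsilon>0$, and since the left-hand side converges as $\epsilon\to0$ (because $c_\ell^*(v_\ell\sep\eta_\infty)$ is integrable up to the endpoints by rapid decay), the right-hand side automatically converges to the same limit; no separate cancellation argument at the boundary is needed, and the paper's proof accordingly stops after the Stokes step.
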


\begin{proof}
Since $\eta_\infty$ has rapid decay, we have
\[
\Ppairing^{\rmod,\rrd}_1(\sigma,[\eta_\infty])
=\sum_\ell \int_0^1\hspace*{-2mm}c_\ell^*((v\sep\eta_\infty))
=\sum_\ell\lim_{\varepsilon\to0}\int_\varepsilon^{1-\varepsilon}\hspace*{-2mm}c_\ell^*((v\sep\eta_\infty)),
\]
and we write
\begin{align*}
\int_\varepsilon^{1-\varepsilon}\hspace*{-2mm}c_\ell^*((v\sep-(\nabla+\ov\partial)m_\infty))
&=-\int_\varepsilon^{1-\varepsilon}\hspace*{-2mm}\rd c_\ell^*((v\sep m_\infty))\\
&=(v\sep m_\infty)(c_\ell(\varepsilon))-(v\sep m_\infty)(c_\ell(1-\varepsilon)).\qedhere
\end{align*}
\end{proof}

\subsubsection*{End of the proof of Proposition \ref{prop:calculP}}
Let $\wt m_{\ell,o}$ be as in the proposition.
Then \hbox{$\wt m_{\ell,o}-m_\infty$} has rapid decay
at $c_\ell(0)$ and $c_\ell(1)$ along the path $c_\ell$,
so that we can replace $m_\infty$ with $\wt m_{\ell,o}$
in the formula of Lemma \ref{lem:calculPCinfty} without changing the limit.
\end{proof}

\subsection{Quadratic relations in dimension one}
We summarize the consequences of the identifications previously obtained in this section to the form of quadratic relations in dimension one. The setting is as in Section \ref{subsec:settings-one}. We assume that $(V,\nabla)$ is endowed with a non-degenerate pairing $\langle\cbbullet,\cbbullet\rangle:(V,\nabla)\otimes(V,\nabla)\to(\cO_U,\rd)$. We~assume that it is symmetric or skew-symmetric, that we denote by $\pm$-symmetric. We will make explicit the way to express middle quadratic relations (Corollaries \ref{cor:good}, \ref{cor:midquadraticreldR}, and Remark \ref{rem:symmetrymid}) in the present setting.

\subsubsection*{Middle de~Rham pairing}
Let $d$ be the dimension of
\[
\coH^1_{\dR,\rmid}(U,V)=\image\bigl[\coH^1_{\dR,\rc}(U,V)\!\to\!\coH^1_{\dR}(U,V)\bigr].
\]
\begin{itemize}
\item
We choose $d$ elements $\omega_1,\dots,\omega_d$ in $\coH^1_{\dR}(U,V)$ and we solve the equation $\nabla\wh m_j=\iota_{\wh D}\omega_j$ for each~$j$ and at each point of $\Div$, with $\wh m_j\in V_{\wh\Div}$. We~choose such solutions~$\wh m_j$.
\item
The matrix $\DRpairing^1_\rmid$ of the de~Rham pairing with respect to these families has $(i,j)$-entries $\sum_{x\in\Div}\res_x\langle\wh m_i,\iota_{\wh\Div}\omega_j\rangle$. It is $\mp$-symmetric.
\item
If $\det\DRpairing^1_\rmid\neq0$, then the family $(\omega_j)$ is a basis of $\coH^1_{\dR,\rmid}(U,V)$.
\end{itemize}

\subsubsection*{Middle Betti pairing}
The space $\coH_1^\rmid(U,V)=\image\bigl[\coH_1^\rrd(U,V)\to\coH_1^\rmod(U,V)\bigr]$ has also dimension~$d$.
\begin{itemize}
\item
We choose $d$ elements $\beta_1,\dots,\beta_d$ in $\coH_1^\rmod(U,V)$ which are the images of elements $\alpha_1,\dots,\alpha_d$ in $\coH_1^\rrd(U,V)$.
\item
The matrix $\Bpairing_1^\rmid$ of the Betti pairing with respect to these families is computed for example by means of Proposition \ref{prop:computationBetti}. It is $\mp$-symmetric.
\item
If $\det\Bpairing_1^\rmid\neq0$, then the family $(\beta_i)$ is a basis of $\coH_1^\rmid(U,V)$.
\end{itemize}

\subsubsection*{Middle period pairing}
Let us fix a triangulation of $(X,\Div)$ that is induced by a triangulation of~$\wt X$ such that the $\RR$-constructible sheaves $\cH^0\DR^\rrd\cV$ and $\cH^0\DR^\rmod\cV$ are constant on each open $1$-simplex (\cf Theorem \ref{th:good} or the proof of Proposition~\ref{prop:tildeduality}). We assume the simplices are given an orientation. Let us write each $\beta_i$ as~$\sum_\ell c_\ell\otimes v_{i,\ell}$, where $c_\ell:[0,1]\to X$ runs among the oriented $1$-simplices of the chosen triangulation, and~$v_{i,\ell}$ is a (possibly zero) section of $\cV^\an$ with moderate growth in the neighbourhood of $c_\ell$. The cycle condition reads:
\[
\text{for each vertex }x\in U^\an,\quad\sum_{\ell\mid c_\ell(1)=x}v_{i,\ell}(x)-\sum_{\ell\mid c_\ell(0)=x}v_{i,\ell}(x)=0.
\]

The middle period matrix $\Ppairing^1_\rmid$ is the $d\times d$ matrix with entries
\[
\mathrm{Pf}\,\sum_\ell\int_{c_\ell}\langle v_{i,\ell},\omega_j\rangle,
\]
where the ``finite part'' $\mathrm{Pf}$ means that, equivalently,
\begin{itemize}
\item
either we replace $\beta_i$ with $\alpha_i$ that we realize as $\sum_\ell c_\ell\otimes v'_{i,\ell}$, where $v'_{i,\ell}$ has rapid decay near the boundary points of $c_\ell$ that are contained in $\Div$ and satisfy the cycle condition, and
\[
\mathrm{Pf}\int_{c_\ell}\langle v_{i,\ell},\omega_j\rangle=\int_{c_\ell}\langle v'_{i,\ell},\omega_j\rangle,
\]
\item
or we replace $\omega_j$ with $(\wh m_j,\omega_j)$, for each $\ell$ such that $c_\ell(o)\in\Div$ ($o=0,1$), we choose a germ~$\wt m_{j,\ell}^o$ as in Proposition \ref{prop:calculP}, and we set
\[
\mathrm{Pf}\int_{c_\ell}\langle v_{i,\ell},\omega_j\rangle=\lim_{\varepsilon\to0} \biggl[\int_\varepsilon^{1-\varepsilon}\hspace*{-4mm}c_\ell^*(\langle v_{i,\ell},\omega_j\rangle)+\langle v_{i,\ell},\wt m_{j,\ell}^0\rangle(c_\ell(0))-\langle v_{i,\ell},\wt m_{j,\ell}^1\rangle(c_\ell(1)) \biggr].
\]
\end{itemize}

\subsubsection*{Middle quadratic relations}
They now read, in terms of matrices in the chosen bases (\cf Corol\-lary \ref{cor:good}),
\begin{equation}\label{eq:quadraticone}
\mp(2\pii)\,\Bpairing_1^\rmid=\Ppairing_1^\rmid\cdot(\DRpairing^1_\rmid)^{-1}\cdot{}^t\Ppairing_1^\rmid.
\end{equation}

\appendix
\section{Twisted singular chains}\label{sec:chains}
In this section, we recall classical results from the Cartan Seminars \cite{Cartan48,Cartan50}. However, we do not use homology of cosheaves as in \cite{Bredon97}. We assume that $X$ is a compact topological space, so that all locally finite open coverings are finite. All sheaves are sheaves of vector spaces over some field $\kk$, in order to avoid any problem with torsion.

\subsection{Presheaves and sheaves}

Let $\wt\cF$ be a presheaf on $X$ and let $\cF$ be the associated sheaf. We have $\cF_x=\varinjlim_{U\ni x}\wt\cF(U)$. For every open set $U\subset X$, there is a natural morphism \hbox{$\wt\cF(U)\to \coprod_{x\in U}\cF_x$.} We denote by $\wt s_x\in\cF_x$ the germ of a section $\wt s\in\wt\cF(X)$. If $\wt s_x=0$, there exists $U_x$ such that the image of $\wt s$ in $\wt\cF(U_x)$ vanishes. It follows that the support of~$\wt s$, that is, the subset $\{x\in X\mid \wt s_x=0\}$ is closed.

\begin{itemize}
\item
We say that $\wt\cF$ has the \emph{surjectivity property} if
\begin{equation}
\label{eq:surj}
\text{for all }x\in X,\quad\wt\cF(X)\to \cF_x\text{ \ is surjective}.\end{equation}
This property is also called $\Phi$-softness, for $\Phi$ the family of closed subsets of $X$ consisting of points only.
\item
We say that $\wt\cF$ has the \emph{injectivity property} if
\begin{equation}
\label{eq:inj}
\wt\cF(X)\to \coprod_{x\in X}\cF_x\text{ \ is injective}.
\end{equation}
The latter condition is equivalent to asking that the natural morphism $\wt\cF(X)\to\Gamma(X,\cF)$ is injective, since \eqref{eq:inj} factorizes through the latter.
\item
We say that $\wt\cF$ is a \emph{fine presheaf} if, for any (locally) finite open covering $\cU=(U_i)$ of~$X$, there exist closed subsets $F_i\subset U_i$ and endomorphisms $\wt\ell_i$ of $\wt\cF$ (\ie each $\wt\ell_i$ is a family, indexed by open sets $U$, of endomorphisms $\wt\ell_i:\wt\cF(U)\to\wt\cF(U)$ compatible with restrictions $U'\subset U$ in an obvious way) such that
\begin{enumeratea}
\item\label{enum:a}
$\wt\ell_i=0$ on $\wt\cF(U)$
if $U\cap F_i=\emptyset$,
\item
for any open set $U$ and each $\wt s\in\wt\cF(U)$, the sum $\sum_i\wt\ell_i(\wt s)$ exists in $\wt\cF(U)$ and is equal to $\wt s$. Let $\ell_i:\cF\to\cF$ be the endomorphism associated with $\wt\ell_i$. If $x\notin F_i$, there exist $U\ni x$ such that $U\cap F_i=\emptyset$, and hence $\ell_{i,x}=0:\cF_x\to\cF_x$. Moreover, for each $U$ and $s\in\Gamma(U,\cF)$, we have, for any $x\in U$, the relation $\sum_i\ell_{i,x}(s_x)=s_x$ (the sum is finite since the covering is finite).
\end{enumeratea}
\end{itemize}

\begin{prop}\label{prop:fine}
Assume that $\wt\cF$ is a fine presheaf that satisfies the surjectivity property \eqref{eq:surj}. Then the natural morphism\vspace*{-3pt}
\[
\wt\cF(X)\to\Gamma(X,\cF)
\]
is onto. If it moreover satisfies the injectivity property \eqref{eq:inj}, then it is an isomorphism.
\end{prop}

\begin{proof}
Once the first statement is proved, the second statement is obvious. Let $s\in\Gamma(X,\cF)$.
By~\eqref{eq:surj}, for any $y\in X$,
there exists a section $\wt s^y\in\wt\cF(X)$ whose germ at $y$ is~$s_y$.
Then there exists a neighbourhood $U_y$ of $y$
such that $\wt s^y_x=s_x$ for any $x\in U_y$.
Let $\cU$ be a finite open covering of~$X$ such that each open set $U_i$ is of the form $U_y$ as above. Let $\wt s^i\in\wt\cF(X)$ be such that $\wt s^i_x=s_x$ for any $x\in U_i$. On the one hand, we have $\ell_{i,x}(s_x)=(\wt\ell_i(\wt s^i))_x$ for any $x\in U_i$, and on the other hand $s_x=\sum_{i\mid U_i\ni x}\ell_{i,x}(s_x)$, whence\vspace*{-3pt}
\[
s_x=\sum_{i\mid U_i\ni x}(\wt\ell_i(\wt s^i))_x=\sum_i(\wt\ell_i(\wt s^i))_x,
\]
since $(\wt\ell_i(\wt s^i))_x=0$ for $x\notin U_i$ according to \eqref{enum:a} above. We conclude, since the covering is finite,
\[
s=\sum_i\image(\wt\ell_i(\wt s^i))=\image\Bigl(\sum_i\wt\ell_i(\wt s^i)\Bigr),
\]
where we consider the image of the morphism $\wt\cF(X)\to\Gamma(X,\cF)$. It follows that the natural morphism $\wt\cF(X)\to\Gamma(X,\cF)$ is surjective.
\end{proof}

\subsection{Homotopically fine sheaves}
See \cite[\S8]{Cartan50-18}.

\begin{defi}\label{def:homfine}
Let $(\wt\cF,\rd)$ be a $\ZZ$-graded differential presheaf on $X$. We say that $\wt\cF$ is homotopically fine if, for any (locally) finite open covering $\cU=(U_i)$ of $X$, there exist closed subsets~$F_i$ contained in~$U_i$ and endomorphisms $\wt\ell_i$ and $\wt k$ of $\wt\cF$ such that
\begin{enumerate}
\item\label{def:homfine1}
$\wt\ell_i=0:\wt\cF(U)\to\wt\cF(U)$ if $U\cap F_i=\emptyset$,
\item\label{def:homfine2}
$\sum_i\wt\ell_i=\id+\wt k\rd+\rd \wt k$.
\end{enumerate}
\end{defi}

We have a similar definition for sheaves. We note the following properties:
\begin{enumeratea}
\item\label{enum:homfinea}
If a graded differential presheaf is homotopically fine, the associated graded differential sheaf is also homotopically fine.
\item\label{enum:homfineb}
If $(\wt\cF,\rd)$ is homotopically fine and if $\wt\cG$ is any presheaf, then the graded differential presheaf $(\wt\cF\otimes\wt\cG,\rd\otimes\id)$ is also homotopically fine. Recall that $\wt\cF\otimes\wt\cG$ is the presheaf \hbox{$U\mto\wt\cF(U)\otimes\wt\cG(U)$.} The same property holds for graded differential sheaves.
\end{enumeratea}

\begin{thm}[{\cite[\S3, Corollaire]{Cartan50-19}}]\label{th:isofine}
Let $(\cF,\rd)$ be a homotopically fine $\ZZ$-graded differential sheaf. For each $p$, the natural morphism\vspace*{-3pt}
\[
\coH^p(\Gamma(X,\cF),\rd)\to\bH^p(X,(\cF,\rd))
\]
is an isomorphism. 
\end{thm}

The proof is obtained by choosing a resolution $C^\cbbullet$ of $\kk_X$ by fine sheaves and realizing $\bH^p(X,(\cF,\rd))$ as $\coH^p(\Gamma(X,(C\otimes\cF)^\cbbullet)$, where $(C\otimes\cF)^\cbbullet$ stands for the simple complex associated with the double complex $C^p\otimes\cF^q$. The assertion follows from the degeneration at $E_1$ of the second spectral sequence, since the second filtration is regular. We can complete this theorem in terms of presheaves as follows.

\begin{prop}\label{prop:homotopfine}
Let $(\wt\cF,\rd)$ be a homotopically fine $\ZZ$-graded differential pre\-sheaf satisfying~\eqref{eq:surj} and \eqref{eq:inj}. Then the natural morphism
\[
\coH^p(\wt\cF(X),\rd)\to\bH^p(X,(\cF,\rd))
\]
is an isomorphism.
\end{prop}

\begin{proof}
According to Theorem \ref{th:isofine}, it is enough to prove that $(\wt\cF(X),\rd)\to(\Gamma(X,\cF),\rd)$ induces an isomorphism in cohomology.
\begin{itemize}
\item
Let $s\in\Gamma(X,\cF)$ be such that $\rd s=0$, and let $\wt s^i$ be as in the proof of Proposition \ref{prop:fine}. Recall that $\ell_{i,x}(s_x)=(\wt\ell_i(\wt s^i))_x$ for any $x\in U_i$. Besides, $s_x=\sum_{i\mid U_i\ni x}\ell_{i,x}(s_x)-\rd k s_x$ since $s$ is $\rd$-closed. Hence
\[
s_x+\rd ks_x=\sum_{i\mid U_i\ni x}(\wt\ell_i(\wt s^i))_x=\sum_i(\wt\ell_i(\wt s^i))_x,
\]
since $(\wt\ell_i(\wt s^i))_x=0$ for $x\notin U_i$ according to \eqref{enum:a} above. Since the covering is finite, we get
\[
s+\rd ks=\sum_i\image(\wt\ell_i(\wt s^i))=\image\Bigl(\sum_i\wt\ell_i(\wt s^i)\Bigr).
\]
Since $\rd(s-\rd ks)=0$, \eqref{eq:inj} implies that $\sum_i\wt\ell_i(\wt s^i)$ is closed, which gives the surjectivity of the cohomological map.
\item
Let $\wt s$ be a $\rd$-closed section of $\wt\cF(X)$ whose image $s\in\Gamma(X,\cF)$ satisfies $s=\rd\sigma$ for some $\sigma\in\Gamma(X,\cF)$. Up to replacing $\sigma$ with $\sigma+(\rd k+k \rd)\sigma$ (and $\wt s,s$ with $\wt s+\rd \wt k\wt s$, $s+\rd ks$) we can assume that $\sigma$ is the image of some $\wt\sigma\in\wt\cF(X)$ as argued above. The image of~$\wt s-\rd\wt\sigma$ in $\Gamma(X,\cF)$ is zero, and hence $\wt s=\rd\wt\sigma$, according to \eqref{eq:inj}. This proves the injectivity of the cohomological map.\qedhere
\end{itemize}
\end{proof}

\subsection{Homotopy operator for singular chains}
See \cite[\S3]{Cartan48-08}.

Let ($S_\bbullet(X),\partial)$ be the complex of singular chains. Recall that $S_\bbullet(X)$ is the infinite-dimensional vector space having as a basis the set of singular simplices (continuous maps from a simplex to~$X$). The support of a singular simplex is its image, which is a compact subset of $X$. For a closed subset $Z\subset X$, we regard $S_\bbullet(Z)$ as the subspace of $S_\bbullet(X)$ generated by simplices with support in $Z$ and $S_\bbullet(X)/S_\bbullet(Z)$ as the subspace generated by simplices whose support is not contained in $Z$. For any (locally) finite open covering $\cU=(U_i)$, there exist (\cf \loccit) endomorphisms $\ell$ and $k$ of $S_\bbullet(X)$ (where $\ell$ preserves the grading and $k$ has degree one) such that
\begin{equation}\label{eq:lk}
\begin{cases}
\bbullet
\text{ for any simplex $\sigma\in S_\bbullet(X)$, $\ell(\sigma)$ is a sum of simplices,}\\
\hphantom{\bbullet}
\text{each one being contained in some~$U_i\cap|\sigma|$,}\\
\bbullet
\text{ if $|\sigma|$ is contained in some $U_i$, then $\ell(\sigma)=\sigma$,}\\
\bbullet
\text{ $\ell=\id+k\partial+\partial k$ (in particular, $\ell\partial=\partial\ell$).}
\end{cases}
\end{equation}
For $Z$ closed in $X$, let $(\wt\ccC_{X,Z,\bbullet},\partial)$ be the chain presheaf
\[
U\mto \wt\ccC_{X,Z,\bbullet}(U)=S_\bbullet(X)/S_\bbullet((X\moins U)\cup Z)
\]
and let $(\ccC_{X,Z,\bbullet},\partial)$ be the associated differential sheaf (with the usual convention $(\ccC_{X,Z}^{-\cbbullet},\rd)=(\ccC_{X,Z,\bbullet},\partial)$, we obtain a $(-\NN)$-graded differential sheaf). For any $x\in X$, we have
\[
\ccC_{X,Z,\bbullet,x}=S_\bbullet(X)/S_\bbullet((X\moins\{x\})\cup Z).
\]
More precisely, $\ccC_{X,Z,\bbullet,x}$ is identified with the subspace of $S_\bbullet(X)$ with basis consisting of those singular simplices whose support contains $x$ and is not contained in~$Z$. Since $\wt\ccC_{X,Z,\bbullet}(X)=S_\bbullet(X)/S_\bbullet(Z)$, properties \eqref{eq:surj} and \eqref{eq:inj} obviously hold.

\begin{prop}[{\cite[p.\,8]{Cartan50-18}}]\label{prop:Chomotopfine}
Given a finite open covering $\cU$ of $X$, there exist a closed covering~$(F_i)$ with $F_i\subset U_i$ for all $i$ and endomorphisms $\wt\ell_i$ and $\wt k$ of the presheaf $\wt\ccC_{X,Z,\bbullet}$ such~that
\begin{enumerate}
\item\label{prop:Chomotopfine1}
$\wt\ell_i=0:\wt\ccC_{X,Z,\bbullet}(U)\to\wt\ccC_{X,Z,\bbullet}(U)$ if $U\cap F_i=\emptyset$, and in particular $\wt\ell_i$ induces the zero map on each germ $\cF_x$ with $x\notin F_i$,
\item
for any singular simplex $\sigma$, each simplex component $\tau$ of $\wt\ell_i(\sigma)$ satisfies $|\tau|\subset F_i\cap|\sigma|$,
\item\label{prop:Chomotopfine3}
$\sum_i\wt\ell_i=\id+\wt k\rd+\rd \wt k$.
\end{enumerate}
\end{prop}

\begin{cor}\label{cor:Chomotopfine}
The presheaf $(\wt\ccC_{X,Z,\bbullet},\partial)$ is homotopically fine.\qed
\end{cor}

It follows that the chain complex $(\ccC_{X,Z,\bbullet},\partial)$ is homotopically fine.

\begin{proof}[Proof of Proposition \ref{prop:Chomotopfine}]
Let $(U_i)_{i\in I}$ be a finite open covering of $X$. One can find closed subsets $F_i\subset U_i$ such that $(F_i)_{i\in I}$ is a closed covering of $X$. We fix a total order on $I$ and we write $I=\{1,\dots,n\}$. We~also fix $\ell$ and $k$ as in \eqref{eq:lk}.

Let $\sigma$ be a simplex in $S_\bbullet(X)$. Then we define $\wt\ell_i(\sigma)$ inductively on $i=1,\dots,n$ as the sum of components of the chain $\ell(\sigma)-\sum_{j<i}\wt\ell_j(\sigma)$ whose underlying simplices have support in $F_i$. The~support is also contained in $|\sigma|$. We can then extend the definition to any finite chain $\sigma\in S_\bbullet(X)$. Then $\ell(\sigma)=\sum_i\wt\ell_i(\sigma)$ in $S_\bbullet(X)$ and $\sum_i\wt\ell_i(\sigma)=\sigma+k\partial\sigma+\partial k\sigma$.

Let $U$ be an open subset of $X$. Since $\ell,k,$ and $\partial$ preserve the support, the above construction also preserves $S((X\moins U)\cup Z)$, and thus defines endomorphisms $\wt\ell_i$ and $\wt k$ of $\wt\ccC_{X,Z,\bbullet}(U)$ satisfying the desired properties.
\end{proof}

\subsection{Singular chains with coefficients in a sheaf}\label{app:chainssheaf}
We make a statement of \hbox{\cite[p.\,8]{Cartan50-18}} precise. Let $\cF$ be any sheaf. We denote by $\wt\ccC_{X,Z,\bbullet}(\cF)$ the presheaf
\[
U\mto\wt\ccC_{X,Z,\bbullet}(U)\otimes\Gamma(U,\cF)
\]
and by $\ccC_{X,Z,\bbullet}(\cF)$ the associated sheaf $\ccC_{X,Z,\bbullet}\otimes\cF$. By Corollary \ref{cor:Chomotopfine} and \eqref{enum:homfinea} and \eqref{enum:homfineb} after Definition~\ref{def:homfine}, both $\wt\ccC_{X,Z,\bbullet}(\cF)$ and $\ccC_{X,Z,\bbullet}(\cF)$ are homotopically fine. These are chain complexes indexed by~$\NN$, when equipped with the boundary $\partial\otimes\id$, that we simply denote by~$\partial$. As usual, we regard them as differential sheaves with grading indexed by $-\NN$ by setting \eg $\wt\ccC_{X,Z}^{-p}(\cF)=\wt\ccC_{X,Z,p}(\cF)$ and with degree-one differential $\rd$ identified with $\partial$. Let us set, by definition,
\[
\coH_p(X,Z;\cF)=\bH^{-p}(X,(\ccC_{X,Z}^\cbbullet(\cF),\rd)).
\]
Then, since each term of the complex $\ccC_{X,Z}^\cbbullet(\cF)$ is homotopically fine, Theorem \ref{th:isofine} implies
\[
\coH_p(X,Z;\cF)\simeq\coH_p(\Gamma(X,(\ccC_{X,Z,\bbullet}(\cF),\partial)).
\]
In particular, given an exact sequence $0\to\cF'\to\cF\to\cF''\to0$ of sheaves, there is a long exact sequence
\begin{equation}\label{eq:longexhomology}
\cdots\ra\coH_p(X,Z;\cF')\ra\coH_p(X,Z;\cF)\ra\coH_p(X,Z;\cF'')\ra\coH_{p-1}(X,Z;\cF')\ra\cdots
\end{equation}
Besides, let us consider the vector space\footnote{It should not be confused with $\wt\ccC_{X,Z,\bbullet}(\cF)(X)$, that is, $\wt\ccC_{X,Z,\bbullet}(X)\otimes\cF(X)$.}
\begin{equation}\label{eq:CF}
\wt\ccC_{X,Z,\bbullet}(X,\cF)=\bigoplus_\sigma \bigl[\kk\sigma\otimes\Gamma(|\sigma|,\cF)\bigr],
\end{equation}
where the sum runs over singular simplices $\sigma$ whose support is not contained in $Z$, together with the boundary map induced by $\partial\otimes\id$. More precisely, if $\partial\sigma=\sum_k\tau_k$, then $\partial(\sigma\otimes f)=\sum_j\tau_j\otimes f|_{|\tau_j|}$, where the sum is taken over $j$ such that the support of $\tau_j$ is not contained in $Z$. A $p$-cycle is a sum $\sum_a(\sigma_a\otimes f_a)$ such that, for each $(p-1)$-simplex $\tau$ with support not contained in $Z$,
\begin{equation}\label{eq:cyclecond}
\sum_{\substack{a,k\\ \tau_{a,k}=\tau}}f_a|_{|\tau_{a,k}|}=0\in \Gamma(|\tau|,\cF).
\end{equation}

Such an element $\sigma\otimes f$, with $f\in\Gamma(|\sigma|,\cF)$, defines a global section in $\Gamma(X,\ccC_{X,Z,\bbullet}(\cF))$. Indeed, let $U$ be an open neighbourhood of $|\sigma|$ on which $f$ is defined. Then $\sigma\otimes f$ defines an element of $\wt\ccC_{X,Z,\bbullet}(U)\otimes\Gamma(U,\cF)$, and hence of $\Gamma(U,\ccC_{X,Z,\bbullet}(\cF))$. Moreover, since the image of~$\sigma$ is zero in $\Gamma(U,\ccC_{X,Z,\bbullet})$ if $U\cap|\sigma|=\emptyset$, this element is supported on $|\sigma|$, and hence extends in a unique way to a global section of $\ccC_{X,Z,\bbullet}(\cF)$ on $X$.

The natural morphism
\begin{equation}\label{eq:CFinj}
\wt\ccC_{X,Z,\bbullet}(X,\cF)\to \Gamma(X,\ccC_{X,Z,\bbullet}(\cF))
\end{equation}
is clearly compatible with $\partial$. Moreover, it is injective. Indeed, let $(\sigma_k)_k$ be distinct singular simplices and, for each $k$, let $f_k\in\Gamma(|\sigma_k|,\cF)$ be such that $\sum_k\sigma_{k,x}\otimes\nobreak f_{k,x}=0$ for all $x$, where~$\sigma_{k,x}$ denotes the image of $\sigma_k$ in $\ccC_{X,Z,\bbullet,x}$. Let~$K_x$ be the set of indices $k$ such that $x\in|\sigma_k|$. Then~$(\sigma_{k,x})_{k\in K_x}$ is part of a basis of $\ccC_{X,Z,\bbullet,x}$, and $\sum_{k\in K_x}\sigma_{k,x}\otimes f_{k,x}=0$ implies that $f_{k,x} = 0$ for any $k\in K_x$. Since $\cF$ is a sheaf, this implies the vanishing $f_k=0$ in $\Gamma(|\sigma_k|,\cF)$ for all~$k$.

\begin{prop}\label{prop:comparisonhomology}
The chain map $(\wt\ccC_{X,Z,\bbullet}(X,\cF),\partial)\to(\Gamma(X,\ccC_{X,Z,\bbullet}(\cF)),\partial)$ induces an iso\-mor\-phism
\[
\coH_p(\wt\ccC_{X,Z,\bbullet}(X,\cF),\partial)\simeq\coH_p(X,Z;\cF).
\]
\end{prop}

\begin{proof}
We argue as for Proposition \ref{prop:homotopfine}.
\begin{itemize}
\item
Let $s\in\Gamma(X,\ccC_{X,Z,\bbullet}\otimes\cF)$. There exists an open covering $\cU$ and $\wt s^i\in\wt\ccC_{X,Z,\bbullet}(U_i)\otimes\cF(U_i)$ such that $s_x=\wt s^i_x$ for any $x\in\nobreak U_i$. We~decompose $\wt s^i=\sum_j\sigma_{ij}\otimes f_{ij}$, where $\sigma_{ij}$ are singular simplices with support not contained in~$(X\moins U_i)\cup Z$, and $f_{ij}\in\cF(U_i)$. Let~$(F_i)$ be a closed covering such that $F_i\subset U_i$ for each $i$ and let $(\wt\ell_i,\wt k)$ be as in Proposition \ref{prop:Chomotopfine}, tensored with $\id$ so that they act on the presheaf $\wt\ccC_{X,Z,\bbullet}\otimes\cF$.
We denote by $\ell_i$ and $k$ the induced sheaf morphisms.
We have $s=\sum_i\ell_i(s)-(\partial k+k\partial)s$. For any $x$, $\ell_i(s)_x=0$ if $x\notin F_i$, according to Proposition \ref{prop:Chomotopfine}\eqref{prop:Chomotopfine1}, and $\ell_i(s)_x=\wt\ell_i(\wt s^i)_x$ otherwise, and therefore
\begin{align*}
s_x=\hspace*{-1mm}\sum_{i\mid U_i\ni x}\hspace*{-1mm}\wt\ell_i(\wt s^i)_x
	-((k\partial+\partial k)s)_x
&=\hspace*{-1mm}\sum_{i\mid U_i\ni x}\sum_j\wt\ell_i(\sigma_{ij})_x\otimes f_{ij,x}
	-((k\partial+\partial k)s)_x\\
&=\sum_i\sum_j\wt\ell_i(\sigma_{ij})_x\otimes f_{ij,x}
	-((k\partial+\partial k)s)_x,
\end{align*}
since $\wt\ell_i(\sigma_{ij})_x=0$ if $x\notin U_i$. Moreover, if we write $\wt\ell_i(\sigma_{ij})\otimes f_{ij}=\sum_a \sigma_a\otimes f_a$, where $\sigma_a$ are singular simplices satisfying $|\sigma_a|\subset|\sigma_{ij}|\subset F_i$, and $f_a\in\cF(U_i)$, we have $(\sigma_a\otimes f_a)_x=0$ if~$x\notin|\sigma_a|$, and hence we can replace $f_a$ with its restriction in $\Gamma(|\sigma_a|,\cF)$. Therefore,
\[
s=\image\Bigl[\sum_i\sum_j\wt\ell_i(\sigma_{ij})\otimes f_{ij}\Bigr]
-((k\partial+\partial k)s),
\]
where the term between brackets belongs to
$\wt\ccC_{X,Z,\cbbullet}(X,\cF)$.
If $s$ is closed, then $s+\partial ks$
is the image of $\sum_i\sum_j\wt\ell_i(\sigma_{ij})\otimes f_{ij}$
and the latter is closed since $\partial(s+\partial ks)=0$
and \eqref{eq:CFinj} is injective. This implies surjectivity of the homology map.
\item
Let $\wt s=\sum_a\sigma_a\otimes f_a$, where $\sigma_a$ are singular simplices and $f_a\in\Gamma(|\sigma_a|,\cF)$, be closed and such that its image $s$ is equal to $\partial s'$. The above formula shows that, up to replacing~$s'$ with $s'+(k\partial+\partial k) s'$ (and $s$ with $s+\partial ks$), we can assume that $s'$ lies in the image of $\wt s'\in\wt\ccC_{X,Z,\bbullet}(X,\cF)$. Then $\wt s-\partial\wt s'$ has image zero, and hence is zero according to the injectivity of \eqref{eq:CFinj}, so the homology map is injective.\qedhere
\end{itemize}
\end{proof}

\begin{prop}[Excision]\label{prop:excision}
Let $Y$ be a closed subset contained in the interior $Z^\circ$ of $Z$. Then the natural morphism\vspace*{-3pt}
\[
\coH_p(X\moins Y,Z\moins Y;\cF)\to \coH_p(X,Z;\cF)
\]
is an isomorphism for all $p$.
\end{prop}

\begin{proof}
We consider the open covering $\cU=(X\moins Y,Z^\circ)$. Let $(\wt\ccC_{X,Z,\bbullet}^\cU(X,\cF),\partial)$ be the subcomplex of $(\wt\ccC_{X,Z,\bbullet}(X,\cF),\partial)$ for which the spaces $\kk\sigma\otimes\Gamma(|\sigma|,\cF)$ occurring as components are those for which~$|\sigma|$ is contained in one of the open sets of $\cU$. By using $(\ell,k)$ adapted to this covering (\cf\eqref{eq:lk}), one obtains that any closed $s\in\wt\ccC_{X,Z,p}(X,\cF)$ can be written as $\ell(s)+\partial k s$, thus showing the surjectivity of the homology map. On the other hand, if $s'\in \wt\ccC_{X,Z,p}^\cU(X,\cF)$ satisfies $s'=\partial s$ with $s\in\wt\ccC_{X,Z,p+1}(X,\cF)$, we have $s'=\ell(s')=\ell(\partial s)=\partial\ell(s)$, hence the injectivity.
\end{proof}

\begin{remark}\label{rem:homologymodels}
The long exact sequence \eqref{eq:longexhomology} is not easily seen in the model $\wt\ccC_{X,Z,\bbullet}(X,\cF)$, while the excision property is better understood in that model, as well as the long exact sequence, for the closed inclusions $Z\subset Y\subset X$:\vspace*{-3pt}
\[
\cdots\to\coH_p(Y,Z;\cF)\to\coH_p(X,Z;\cF)\to\coH_p(X,Y;\cF)\to\coH_{p-1}(Y,Z;\cF)\to\cdots
\]
On the other hand, if $\cF$ is the constant sheaf with fiber $F$ (assume that $X$ is connected), then for each singular simplex $\sigma$, the support $|\sigma|$ is connected, so $\Gamma(|\sigma|,\cF)$ is canonically identified with $F$ by the restriction morphism\vspace*{-3pt}
\[
F=\Gamma(X,\cF)\to\Gamma(|\sigma|,\cF),
\]
so\vspace*{-3pt}
\[
(\wt\ccC_{X,Z,\bbullet}(X,\cF),\partial)\simeq(\wt\ccC_{X,Z,\bbullet}(X)\otimes F,\partial)
\]
and $\coH_p(X,Z;\cF)$ is the usual singular homology with coefficients in $F$.
\end{remark}

\subsection{Piecewise smooth and simplicial chains}\label{app:chainssheafsimplicial}
Assume that $X$ is a manifold (possibly with boundary). We denote by $(\wt\ccC^\sm_{X,Z,\bbullet},\partial)$ the subcomplex of $(\wt\ccC_{X,Z,\bbullet},\partial)$ consisting of piecewise smooth singular chains (\ie having a basis formed of piecewise smooth singular simplices), and by $(\ccC^\sm_{X,Z,\bbullet},\partial)$ the associated complex of sheaves. On the other hand, for a sheaf $\cF$, we define $\wt\ccC^\sm_{X,Z,\bbullet}(X,\cF)$ in a way similar to~\eqref{eq:CF}.

\begin{prop}\label{prop:Csm}
The inclusions of chain complexes
\begin{align*}
(\ccC^\sm_{X,Z,\bbullet},\partial)&\hto(\ccC_{X,Z,\bbullet},\partial),\\
(\wt\ccC^\sm_{X,Z,\bbullet}(X,\cF),\partial)&\hto(\wt\ccC_{X,Z,\bbullet}(X,\cF),\partial)
\end{align*}
induce isomorphisms in homology.
\end{prop}

\begin{proof}
We will prove the statement for the second inclusion, the proof for the first one being similar.

Let $s=\sum_a\sigma_a\otimes f_a$, with $f_a\in\Gamma(|\sigma_a|,\cF)$, be a $p$-cycle in $\wt\ccC_{X,Z,\bbullet}(X,\cF)$. We can assume that~$f_a$ is the germ along $|\sigma_a|$ of $f_a\in\Gamma(U_a,\cF)$ for some open neighbourhood $U_a$ of $|\sigma_a|$. The cycle condition is as in \eqref{eq:cyclecond} and for each $\tau$ there exists $U_\tau$ such that the corresponding sum~$\sum f_a$ is zero on $U_\tau$. We can approximate each $\sigma_a$ by piecewise smooth simplices with image contained in~$U_a$ and more precisely construct a family of approximations $\Sigma_a(t)=\sigma_a^t$ ($t\in[0,1]$) with $\sigma_a^0=\sigma_a$ and~$\sigma_a^t$ piecewise smooth for each $t\in(0,1]$ such that $\tau\times[0,1]$ has image in $U_\tau$ for each face $\tau$ of $\sigma_a$. We~can then subdivide $\Delta_p\times[0,1]$ to regard each $\Sigma_a$ as a $(p+1)$-chain and the total family $S=\sum_a\Sigma_a\otimes f_a$ as an element of $\wt\ccC_{X,Z,p+1}(X,\cF)$ (we restrict $f_a$ to $|\Sigma_a|\subset U_a$). The boundary chain~$\partial S$ reads
\[
\sum_{a,k}T_{a,k}\otimes f_a|_{|T_{a,k}|}+\sigma_a^1\otimes f_a-\sigma_a^0\otimes f_a,
\]
where $T_{a,k}$ is a map $\Delta_{p-1}\times[0,1]\to U_a$ satisfying $T_{a,k}^0=\tau_{a,k}$ and \hbox{$|T_{a,k}|\subset U_{\tau_{a,k}}$}. The cycle condition on~$s$ implies that, for each $\tau:\Delta_{p-1}\to X$, we have
\[
\sum_{a,k\mid\tau_{a,k}=\tau} f_a|_{U_\tau}=0,
\]
so that $\partial S=\sigma_a^1\otimes f_a-\sigma_a^0\otimes f_a$. This shows the surjectivity of the homology map, and its injectivity follows.
\end{proof}

On the other hand, assume that $X$ is endowed with a simplicial structure~$\cT$ compatible with~$Z$. By this, we mean that $X$ is the support of a simplicial complex $\cT$ and $Z$ is the support of a subcomplex~$\cT_Z$ of~$\cT$. We~denote by $(\wt\ccC^\triup_{\cT,\cT_Z,\bbullet},\partial)$ the subcomplex of $(\wt\ccC_{X,Z,\bbullet},\partial)$ consisting of simplicial chains of the simplicial structure. We define correspondingly the simplicial chain complex $(\wt\ccC^\triup_{\cT,\cT_Z,\bbullet}(X,\cF),\partial)$. We~make the following assumption on $\cF$ and the simplicial structure.

\begin{assumption}\label{ass:simplicial}\mbox{}
For any simplex $\sigma$ of the simplicial complex $\cT$, there exists an open neighbourhood $U_\sigma$ of $\sigma$ that retracts onto $\sigma$ and such that the sheaf $\cF_{|U_\sigma}$ can be decomposed as a direct sum of sheaves $\cF_{\sigma,i}$, each of which is constant of finite rank on $U_\sigma\moins T_{\sigma,i}$ and zero on $T_{\sigma,i}$, for some closed subset $T_{\sigma,i}$ of $U_\sigma$ that intersects $\sigma$ along a (possibly empty or full) closed face $\tau_i$~of~$\sigma$.
\end{assumption}

We note that, if $\cT'$ is a subdivision of $\cT$ and $\cT'_Z$ is the corresponding subdivision of $\cT'_Z$, then $(\cF,\cT',\cT'_Z)$ satisfies Assumption \ref{ass:simplicial} if $(\cF,\cT,\cT_Z)$ does so.

\begin{prop}\label{prop:triup}
Under Assumption \ref{ass:simplicial}, the inclusion of chain complexes
\[
(\wt\ccC^\triup_{\cT,\cT_Z,\bbullet}(X,\cF),\partial)\hto(\wt\ccC_{X,Z,\bbullet}(X,\cF),\partial)
\]
induces an isomorphism in homology.
\end{prop}

We immediately note the following consequence, which justifies that we denote from now on by $(\wt\ccC^\triup_{X,Z,\bbullet}(X,\cF),\partial)$ the complex $(\wt\ccC^\triup_{\cT,\cT_Z,\bbullet}(X,\cF),\partial)$ when Assumption \ref{ass:simplicial} holds.

\begin{cor}
The homology of $(\wt\ccC^\triup_{\cT,\cT_Z,\bbullet}(X,\cF),\partial)$ is independent of $\cT$ provided Assumption \ref{ass:simplicial} is satisfied. In particular, it is equal to the homology of $(\wt\ccC^\triup_{\cT',\cT'_Z,\bbullet}(X,\cF),\partial)$ for any subdivision~$\cT'$ of~$\cT$.\qed
\end{cor}

\begin{proof}[Proof of Proposition \ref{prop:triup}]
We can argue separately with $X$ and $Z$ and obtain the result for the pair $(X,Z)$ due to compatibility between long exact sequences of pairs. So we argue with $X$, as in \cite[\S34]{Munkres84}, by induction on the (finite) number of simplices in the simplicial decomposition $X$. Let $\sigma_o$ be a simplex of maximal dimension (that we can assume $\geq1$) and let $X'$ be the simplicial set $X$ with~$\sigma_o$ deleted (but the boundary simplices of~$\sigma_o$ kept in $X'$). The underlying topological space $X'$ is~$X$ with the interior of $\sigma_o$ deleted. Then $(X',\cF)$ also satisfies Assumption~\ref{ass:simplicial}. We consider, in both simplicial and singular homology, the long exact sequences of pairs
\[
\cdots\to\coH_p(\sigma_o;\cF)\to\coH_p(X;\cF)\to\coH_p(X,\sigma_o;\cF)\to\coH_{p-1}(\sigma_o;\cF)\to\cdots
\]
and the natural morphism between them. The assertion follows from Lemma \ref{lem:triup} below.
\end{proof}

\begin{lemma}\label{lem:triup}
With the notation and assumption from Proposition \ref{prop:triup},
\begin{enumerate}
\item\label{lem:triup1}
the natural morphism
\[
(\wt\ccC^\triup_{\sigma_o,\bbullet}(\sigma_o,\cF),\partial)\hto(\wt\ccC_{\sigma_o,\bbullet}(\sigma_o,\cF),\partial)
\]
induces an isomorphism in homology;

\item\label{lem:triup2}
both natural inclusions
\begin{align*}
(\wt\ccC^\triup_{X',\partial\sigma_o,\bbullet}(X',\cF),\partial)&\hto(\wt\ccC_{X,\sigma_o,\bbullet}^\triup(X,\cF),\partial),\\
(\wt\ccC_{X',\partial\sigma_o,\bbullet}(X',\cF),\partial)&\hto(\wt\ccC_{X,\sigma_o,\bbullet}(X,\cF),\partial),
\end{align*}
induce an isomorphism in homology.
\end{enumerate}
\end{lemma}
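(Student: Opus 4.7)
The plan is to handle the two parts of the lemma separately, reducing each to tools already available in the appendix.

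\textbf{Part \eqref{lem:triup1}.} The first step is to invoke Assumption~\ref{ass:simplicial} to decompose $\cF|_{U_{\sigma_o}} = \bigoplus_i \cF_{\sigma_o, i}$. By additivity in $\cF$ of both chain complexes, one reduces to a single summand, so that $\cF$ is the extension by zero from $U_{\sigma_o} \setminus T$ of a constant sheaf $F$ of finite rank, with $T \cap \sigma_o = \tau$ a (possibly empty or full) face of $\sigma_o$. If $\tau = \sigma_o$ both sides vanish; otherwise, a direct computation of global sections shows that $\Gamma(|c|, \cF) = F$ when $|c| \cap \tau = \emptyset$ and is zero otherwise, whether $c$ is a face of $\sigma_o$ or a singular simplex into $\sigma_o$---in the latter case because a nonzero section on the connected support $|c|$ touching $\tau$ would need to vanish on an open neighborhood of $|c| \cap \tau$ while being locally constant elsewhere, forcing it to be identically zero. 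This identifies $\wt\ccC^\triup_{\sigma_o,\bullet}(\sigma_o,\cF)$ with $C^\triup_\bullet(\tau^c) \otimes F$ and $\wt\ccC_{\sigma_o,\bullet}(\sigma_o,\cF)$ with $S_\bullet(\sigma_o \setminus \tau) \otimes F$, where $\tau^c$ denotes the face of $\sigma_o$ opposite to $\tau$. The natural comparison map then factors as $C^\triup_\bullet(\tau^c) \otimes F \hookrightarrow S_\bullet(\tau^c) \otimes F \hookrightarrow S_\bullet(\sigma_o \setminus \tau) \otimes F$, each arrow being a classical quasi-isomorphism (the first by simplicial-versus-singular comparison on the simplex $\tau^c$, the second by deformation retraction of $\sigma_o \setminus \tau$ onto $\tau^c$).

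\textbf{Part \eqref{lem:triup2}.} The simplicial case is by direct identification of chain complexes: since $\sigma_o$ is top-dimensional, any simplex $\sigma' \in \cT$ not contained in $\sigma_o$ meets $\sigma_o$ at most along a proper common face lying in $\partial\sigma_o$, so the generators of both simplicial chain complexes are the same pairs $(\sigma', f)$ with $\sigma'$ a simplex of $\cT$ distinct from $\sigma_o$ and its faces, and $f \in \Gamma(|\sigma'|, \cF)$; the boundary maps and sections agree, and the inclusion is the identity. For the singular case, the plan is to apply Proposition~\ref{prop:excision} with $Y = \{b\}$, the barycenter of $\sigma_o$, which is closed in $X$ and lies in the open simplex $\sigma_o^\circ$, yielding $\coH_p(X \setminus \{b\}, \sigma_o \setminus \{b\}; \cF) \cong \coH_p(X, \sigma_o; \cF)$. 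It then remains to identify the left-hand side with $\coH_p(X', \partial\sigma_o; \cF)$ via the natural inclusion, which is a deformation retraction of pairs through the radial flow pushing $\sigma_o^\circ \setminus \{b\}$ outward to $\partial\sigma_o$. The crucial sheaf-theoretic input is that Assumption~\ref{ass:simplicial} applied to $\sigma_o$ forces $\cF|_{\sigma_o^\circ}$ to be a direct sum of constant sheaves (each proper face $\tau_i$ is disjoint from $\sigma_o^\circ$, so the corresponding summand is constant or zero there), so a chain-level homotopy inverse can be built by the prism operator, extending sections along the radial flow via the constant structure of $\cF$ on $\sigma_o^\circ$.

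\textbf{Main obstacle.} The singular case of Part~\eqref{lem:triup2} is the technical crux: one must verify that the prism homotopy implementing the radial retraction is well-defined \emph{with sheaf coefficients}. Constancy of $\cF$ on $\sigma_o^\circ$ makes the propagation canonical in the interior, but compatibility at the prism endpoints on $\partial\sigma_o$---where $\cF$ is no longer constant---requires combining the decomposition from Assumption~\ref{ass:simplicial} for $\sigma_o$ with those for its faces, so that the radially propagated sections agree with the local sheaf structure on $\partial\sigma_o$. Once this matching is checked, the standard prism identity $\partial H + H \partial = \mathrm{id} - i \circ r$ upgrades to the sheaf-coefficient setting and closes the argument.
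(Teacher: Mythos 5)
Your Part~\eqref{lem:triup1} is essentially the paper's own argument: decompose $\cF$ by Assumption~\ref{ass:simplicial}, reduce to a single summand that is ``constant off $\tau_o$, zero on $\tau_o$'' for a proper face $\tau_o$ of $\sigma_o$, observe that both the simplicial and the singular chain complexes collapse onto the opposite face $\tau'_o$ with constant coefficient~$F$, and conclude with the classical simplicial-versus-singular comparison. One small imprecision: a section over $|c|$ lives on an arbitrary open neighbourhood of $|c|$, which may meet $T_o$ even when $|c|$ does not, so the ``locally constant and vanishes near $|c|\cap\tau$'' argument needs a bit more care; the paper handles this cleanly by pulling back to $\sigma^{-1}\cF$ on the convex set $\Delta_p$ and running the open-closed argument there.

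Part~\eqref{lem:triup2} is where your route diverges from the paper's, and the singular statement has a genuine gap. Excising the barycenter $b$ via Proposition~\ref{prop:excision} is legitimate, but the radial deformation retraction of $(X\setminus\{b\},\sigma_o\setminus\{b\})$ onto $(X',\partial\sigma_o)$ does \emph{not} produce a chain map with $\cF$-coefficients. Fix a summand $\cF_i$ that vanishes on $T_i$ (with $\tau_i=T_i\cap\sigma_o\neq\emptyset$) and is constant elsewhere. A singular simplex $\sigma$ not contained in $\sigma_o$, with $|\sigma|$ disjoint from $T_i$ but meeting $\sigma_o^\circ$, can contain points whose rays from $b$ exit $\sigma_o$ at points of $\tau_i\subset T_i$. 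The prism over such a $\sigma$ then touches $T_i$, and any section of $\cF_i$ over a neighbourhood of the connected prism image must vanish---contradicting the nonzero initial coefficient. Hence neither $r_\#$ nor $H_\#$ is well-defined, and the identity $\partial H + H\partial = \mathrm{id} - i_\# r_\#$ cannot be ``upgraded to sheaf coefficients'' as you hope. You correctly flag this compatibility at $\partial\sigma_o$ as the crux, but the suggested fix---matching the decompositions of Assumption~\ref{ass:simplicial} for $\sigma_o$ and for its faces---cannot rescue a retraction that is radial from a single interior point: such a retraction necessarily carries nonvanishing sections of some summand into its vanishing locus.

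The paper sidesteps this by a structurally different argument: cover $X$ by $U_{\sigma_o}$ and an open subset of $X'$, use the operators $(\ell,k)$ of \eqref{eq:lk} to replace the cycle by a homologous one whose simplices each sit in one of the two open sets, and then, on the $U_{\sigma_o}$ part, treat each summand $\cF_{\sigma_o,i}$ \emph{on its own} with a retraction toward the opposite face $\tau'_o$. That retraction stays inside $U_{\sigma_o}\setminus T_o$, where the given summand is genuinely constant, which is exactly what legitimizes the homotopy. Replacing your single radial $r$ by summand-adapted retractions of this kind is the missing step.
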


\begin{proof}\mbox{}
\par\eqref{lem:triup1}
Let us decompose $\cF_{|\sigma_o}$ as in Assumption \ref{ass:simplicial}, let us fix a corresponding component $\cF_{\sigma_o,i}$ of $\cF$ that we still call $\cF$, and let $\tau_o$ be the face of $\sigma_o$ on which it is zero. The assertion is trivial if $\tau_o=\sigma_o$. If $\tau_o=\emptyset$, then $\cF$ is constant and the proof is done in \cite[\S34]{Munkres84}.

Assume now that $\tau_o$ is non-empty and different from $\sigma_o$. For any face~$\tau$ of~$\sigma_o$ intersecting~$\tau_o$, $\cF_{|\tau}$ is constant on $\tau\moins(\tau\cap\tau_o)$ and zero on $\tau\cap\tau_o$, so that \hbox{$\Gamma(\tau,\cF)=0$}. Let $\tau'_o$ be the union of faces of~$\sigma_o$ not intersecting $\tau_o$. It is a single face of $\sigma_o$, and hence a simplex. Now, $\cF$ is constant on $\tau'_o$ and, obviously, $\coH_p^\triup(\sigma_o;\cF)=\coH_p^\triup(\tau'_o;\cF)$.

Let us now compute $\coH_p(\sigma_o;\cF)$. Let $\sigma:\Delta_p\to\sigma_o$ be a singular $p$-simplex of~$\sigma_o$, with image~$|\sigma|$. Note that the natural morphism $\Gamma(|\sigma|,\cF)\to\Gamma(\Delta_p,\sigma^{-1}\cF)$ is injective (since it preserves germs). If $|\sigma|\cap\tau_o\neq\emptyset$, the argument above shows that $\Gamma(\Delta_p,\sigma^{-1}\cF)=0$, and hence so is $\Gamma(|\sigma|,\cF)$. Thus, $\wt\ccC_{\sigma_o,p}(\sigma_o,\cF)$ only involves singular simplices $\sigma$ not intersecting~$\tau_o$. Let~$F$ be the constant value of~$\cF$ on $\sigma_o\moins\tau_o$. For such a simplex $\sigma$, we thus have $\Gamma(|\sigma|,\cF)\simeq\Gamma(\Delta_p,\sigma^{-1}\cF)=F$, and then $\wt\ccC_{\sigma_o,p}(\sigma_o,\cF)=\wt\ccC_{\sigma_o\moins\tau_o,p}(\sigma_o\moins\tau_o)\otimes F$. In other words, $\coH_p(\sigma_o;\cF)=\coH_p(\sigma_o\moins\tau_o)\otimes F$. Since $(\sigma_o\moins\tau_o)$ retracts to~$\tau'_o$, this group is also equal to $\coH_p(\tau'_o;\cF)$, and we conclude with \cite[\S34]{Munkres84}.

\par\eqref{lem:triup2}
The assertion for the simplicial complexes is obvious since it is already an equality. We thus consider the singular chain complexes.

Let us prove surjectivity of the homology map. Let $s$ be a closed singular chain in $\wt\ccC_{X,\sigma_o,p}(X,\cF)$. We can cover $X$ by $U_o$ and $U'$, where $U_o=U_{\sigma_o}$ is given by Assumption \ref{ass:simplicial} and $U'\subset X'$. Choosing $(\ell,k)$ as in \eqref{eq:lk}, we write $s+\partial ks=s_o+s'$, with $s_o,s'$ closed in $\wt\ccC_{U_o,\sigma_o,p}(X,\cF)$ and in $\wt\ccC_{U',p}(X,\cF)\subset \wt\ccC_{X',\sigma_o,p}(X',\cF)$ respectively. We decompose $s_o$ according to Assumption \ref{ass:simplicial}. For the component corresponding to ``$\cF$ constant on $\sigma_o$'', it is proved in \cite[\S34]{Munkres84} that it is homologous to a chain in $\wt\ccC_{U_o\cap X',\partial\sigma_o,p}(X,\cF)$. Assume thus that $\cF$ is zero on $T_o$ with $\tau_o=T_o\cap\sigma_o\neq\emptyset$, and constant on $U_o\moins T_o\neq\emptyset$. Arguing as in Case \eqref{lem:triup1}, we show that singular simplices occurring with a non-zero coefficient in~$s_o$ do not intersect~$T_o$. There is a projection $\sigma_o\moins\tau_o\to\tau'_o$ (with $\tau'_o$ as above),
so~that if we compose~$s_o$ with the corresponding retraction,
we obtain a homologous closed $p$-chain in
$\wt\ccC_{U_o\cap X',\partial\sigma_o,p}(X,\cF)$.

Assume now that $\wt s\!\in\!\wt\ccC_{X',\partial\sigma_o,p}(X,\cF)$ is equal to $\partial s$ with $s$ in $\wt\ccC_{X,\sigma_o,p+1}(X,\cF)$. It follows that $\partial\wt s=0$ in $\wt\ccC_{X,\sigma_o,p-1}(X,\cF)$, and hence in $\wt\ccC_{X',\partial\sigma_o,p-1}(X,\cF)$.
With $U_o,U'$, and $(\ell,k)$ as above,
we have
\[
\wt s+\partial k\wt s=\ell(\wt s)=\partial\ell(s)=\partial s_o+\partial s'.
\]
The above argument shows that $s_o$ is homologous to a~$(p+1)$-chain in
\[
\wt\ccC_{U_o\cap X',\partial\sigma_o,p+1}(X,\cF),
\]
and hence $\wt s$ is the boundary of a chain in~$\wt\ccC_{X',\partial\sigma_o,p+1}(X,\cF)$, which shows injectivity.
\end{proof}

\subsection{The dual chain complex with coefficient in a sheaf}\label{subsec:appdualchain}
In this section, we set $(X,Z)=(M,\bM)$, where $M$ is a manifold with corners, and we consider a simplicial decomposition $\cT$ of~$(M,\bM)$. Let $\cT'$ denote the first barycentric subdivision of $\cT$. For any simplex $\sigma$ in $\cT$, let $\wh\sigma$ denote its barycenter and let $D(\sigma)$ be the open cell dual to $\sigma$, which is the sum of open simplices of~$\cT'$ having $\wh\sigma$ as their final vertex (\cf\cite[\S64]{Munkres84} for definitions and details). If $\sigma$ is not contained in $\bM$, then the closure $\ov D(\sigma)$ does not intersect $\bM$. If $\sigma$ is contained in $\bM$, then $D(\sigma)$ is contained in $M^\circ$. In any case, if we regard $\ov D(\sigma)$ as a chain of $\cT'$ relative to $\bM$, then $\partial\ov D(\sigma)$ is a sum of terms $\ov D(\sigma_i)$ for some $\sigma_i$ in $\cT$. The following is clear.

\begin{lemma}\label{lem:A7}
Let $\sigma$ be a simplex of $\cT$ of dimension $p$. Then $\sigma$ is the only simplex of dimension~$p$ of~$\cT$ intersected by $\ov D(\sigma)$, and the intersection is transversal.
\end{lemma}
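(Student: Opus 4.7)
The plan is to work directly with the standard combinatorial description of the dual cell in terms of the first barycentric subdivision $\cT'$, and then extract both assertions from elementary poset inclusions.

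First I would recall the standard characterization: a point of $\ov D(\sigma)$ belongs to some closed simplex of $\cT'$ of the form $[\wh\tau_0,\wh\tau_1,\dots,\wh\tau_k]$ where $\sigma\le\tau_0<\tau_1<\dots<\tau_k$ is a strictly increasing chain of simplices of $\cT$ (the open cell $D(\sigma)$ itself corresponds to chains with $\tau_0=\sigma$, and closure adds the chains starting from a proper overface). Dually, a $p$-simplex $\sigma'$ of $\cT$ is the union of those closed simplices $[\wh\tau_0,\dots,\wh\tau_k]$ of $\cT'$ for which $\tau_0<\dots<\tau_k\le\sigma'$. Consequently, if a point $x$ belongs to $\ov D(\sigma)\cap\sigma'$, it lies in some simplex $[\wh\tau_0,\dots,\wh\tau_k]$ of $\cT'$ with
\[
\sigma\le\tau_0<\dots<\tau_k\le\sigma'.
\]
In particular $\sigma$ is a face of $\sigma'$; since both have dimension $p$, we must have $\sigma=\sigma'$. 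This gives the first assertion.

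For the second assertion, applying the same inclusion with $\sigma'=\sigma$ forces the chain to degenerate to $\tau_0=\dots=\tau_k=\sigma$, hence $k=0$ and $x=\wh\sigma$. Therefore $\ov D(\sigma)\cap\sigma=\{\wh\sigma\}$, the intersection being a single interior point of both cells. To see that it is transversal, I would use the standard fact that $\ov D(\sigma)$ has dimension $m-p$ and meets $\sigma$ only at $\wh\sigma$, together with a local model at $\wh\sigma$: in barycentric coordinates on any simplex of $\cT$ containing $\sigma$, the tangent space to $\sigma$ at $\wh\sigma$ is spanned by the directions towards the vertices of $\sigma$, while the tangent space to $\ov D(\sigma)$ at $\wh\sigma$ is spanned by directions towards the barycenters $\wh\tau$ of proper overfaces $\tau>\sigma$; since these two families of directions together span $T_{\wh\sigma}M$ (their dimensions $p$ and $m-p$ sum to $m$ and they intersect only at the origin, as the unique common point of the two cells is $\wh\sigma$), the two submanifolds with corners meet transversally at $\wh\sigma$.

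The main point, which requires a little care but no real obstacle, is the poset bookkeeping in the first step; once the two descriptions of $\ov D(\sigma)$ and of $\sigma'$ as unions of simplices of $\cT'$ are set up, both statements fall out immediately. I would refer to \cite[\S64]{Munkres84} for the definitions and the technical lemmas on barycentric subdivisions used along the way, so the proof can be kept short.
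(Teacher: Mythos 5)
The paper does not actually give a proof of this lemma: it states ``The following is clear'' just before it and relies on the reader's familiarity with \cite[\S64]{Munkres84}, from where the definitions and basic facts about dual blocks are imported. Your proposal therefore fills a gap rather than reproduces an argument, and it does so with exactly the standard combinatorial tools: the description of $\ov D(\sigma)$ as the union of those simplices $[\wh\tau_0,\dots,\wh\tau_k]$ of $\cT'$ with $\sigma\le\tau_0<\dots<\tau_k$, the description of $\ov{\sigma'}$ as the union of those with $\tau_k\le\sigma'$, and the resulting face inequality $\sigma\le\sigma'$ forcing $\sigma=\sigma'$ when dimensions agree. This part, together with the identification $\ov D(\sigma)\cap\ov\sigma=\{\wh\sigma\}$, is completely correct.

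The transversality discussion is the right idea but is worded loosely: $\ov D(\sigma)$ is a polyhedron, not a smooth submanifold, so it has no ``tangent space'' at $\wh\sigma$ in the usual sense (different $(m-p)$-simplices of $\cT'$ in $\ov D(\sigma)$ through $\wh\sigma$ span different affine $(m-p)$-planes). What is needed for Proposition~\ref{prop:Kronecker} is only that, after a local change of coordinates near $\wh\sigma$, one can put $\sigma$ in the plane $(x_1,\dots,x_p)$ and $\ov D(\sigma)$ in the complementary plane $(x_{p+1},\dots,x_m)$. The clean way to justify this is to fix a top-dimensional simplex $\tau\in\cT$ containing $\sigma$ and the corresponding maximal chain $\sigma=\tau_0<\tau_1<\dots<\tau_{m-p}=\tau$; then a short barycentric-coordinate computation (write points of the affine span of $\{\wh\sigma,\wh\tau_1,\dots,\wh\tau_{m-p}\}$ in the barycentric coordinates of $\tau$ and impose the vanishing of the last $m-p$ of them) shows that the affine span of $\sigma$ and that of $[\wh\sigma,\wh\tau_1,\dots,\wh\tau_{m-p}]$ meet only at $\wh\sigma$. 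Your phrase ``they intersect only at the origin, as the unique common point of the two cells is $\wh\sigma$'' is exactly this observation once it is applied separately to each maximal simplex of $\ov D(\sigma)$ through $\wh\sigma$; stating it that way would make the step rigorous.
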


\begin{figure}[htb]
\includegraphics[scale=.65]{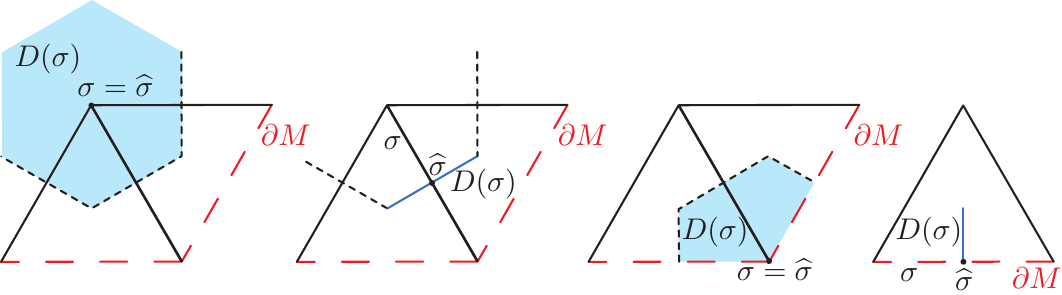}
\caption{Various examples of dual cells}
\end{figure}

We consider a sheaf $\cF$ on $M$ satisfying Assumption \ref{ass:simplicial} with respect to $\cT$, with the supplementary condition that $\cF$ is locally constant on \hbox{$M^\circ=M\moins\bM$}, that is, the closed subsets~$T_{\sigma,i}$ of Assumption \ref{ass:simplicial} are contained in $\bM$. We~note that $\cF$ also satisfies the previous properties with respect to $\cT'$. The chain complex $(\wt\ccC_{\cT^\vee,\bM,\bbullet}^\smallsquare(M,\cF),\partial)$ whose term of index $p$ consists of the direct sum  of the terms $\Gamma(\ov D(\sigma),\cF)$ with $\sigma$ of codimension $p$, is thus a subcomplex of $(\wt\ccC_{\cT',\bM,\bbullet}^\triup(M,\cF),\partial)$.

\begin{lemma}
Under the previous assumptions, the inclusion of chain complexes
\[
(\wt\ccC_{\cT^\vee,\bM,\bbullet}^\smallsquare(M,\cF),\partial)\hto(\wt\ccC_{\cT',\bM,\bbullet}^\triup(M,\cF),\partial)
\]
induces an isomorphism in homology.
\end{lemma}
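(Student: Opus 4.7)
The plan is to establish the inclusion as a quasi-isomorphism via a filtration spectral sequence whose $E^1$-page recovers the dual cell complex. Each simplex $\tau$ of $\cT'$ has its interior contained in a unique open dual cell $D(\sigma_\tau)$ with $\sigma_\tau\in\cT$. Filter $\wt\ccC^\triup_{\cT',\bM,\bbullet}(M,\cF)$ increasingly by $F_k=\{\tau\otimes f:\dim D(\sigma_\tau)\leq k\}$. This filtration is preserved by $\partial$: for $\tau^\circ\subset D(\sigma)$, each face of $\tau$ has interior either in $D(\sigma)$ itself or in $D(\sigma')$ for some $\sigma<\sigma'$, and in the latter case $\dim D(\sigma')<\dim D(\sigma)$.

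The associated graded $F_k/F_{k-1}$ splits as a direct sum, over simplices $\sigma\in\cT$ with $\dim D(\sigma)=k$, of chain complexes computing the homology of the pair $(\ov D(\sigma),\partial\ov D(\sigma)\cup(\ov D(\sigma)\cap\bM))$ with coefficients in $\cF|_{\ov D(\sigma)}$. The key local statement, which I expect to be the main technical obstacle, is that for each such $\sigma$ this relative homology is concentrated in the top degree $\dim D(\sigma)=m-\dim\sigma$, with value $\Gamma(\ov D(\sigma),\cF)$. Here Assumption~\ref{ass:simplicial}, combined with the local constancy of $\cF$ on $M^\circ$ (which forces each $T_{\sigma,i}\subset\bM$), lets us decompose $\cF|_{U_\sigma}=\bigoplus_i\cF_{\sigma,i}$ and reduces each summand, via an excision argument, to the classical relative singular homology of the closed ball $\ov D(\sigma)$ modulo a union of some of its boundary faces.

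Granting this local statement, the $E^1$-page is exactly $\wt\ccC^\smallsquare_{\cT^\vee,\bM,\bbullet}(M,\cF)$, with $d^1$-differential matching its boundary map; this identification uses the decomposition $\partial\ov D(\sigma)=\bigcup_{\sigma<\sigma'}\ov D(\sigma')$ into lower-dimensional dual cells. The spectral sequence degenerates at $E^2$, and the inclusion $\wt\ccC^\smallsquare\hto\wt\ccC^\triup$ from the statement respects the filtration and induces the identity on $E^1$, so it is a quasi-isomorphism.

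The main obstacle is the local computation, especially for $\sigma\subset\bM$: although $D(\sigma)\subset M^\circ$, one has $\wh\sigma\in\bM\cap\partial\ov D(\sigma)$, and identifying precisely which faces of $\ov D(\sigma)$ belong to $\bM$ or meet each $T_{\sigma,i}$ — together with the resulting behavior of the summands $\cF_{\sigma,i}$ — requires some care. A nested induction on $\sigma$ inside $\ov D(\sigma)$, in the spirit of the proof of Lemma~\ref{lem:triup}, seems to be the cleanest way to close this step.
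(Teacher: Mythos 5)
Your filtration $F_k$ by $\dim D(\sigma_\tau)\leq k$ is exactly the dual-block filtration underlying Munkres' proof of Theorem~64.2, which is precisely what the paper cites for this lemma; you are spelling out the same argument in spectral-sequence language rather than taking a different route. You correctly identify the local computation on each pair $(\ov D(\sigma),\partial\ov D(\sigma)\cup(\ov D(\sigma)\cap\bM))$ as the real content, and your plan to handle boundary simplices by combining Assumption~\ref{ass:simplicial} with the observation $T_{\sigma,i}\subset\bM$ is the right way to close it.
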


\begin{proof}
Similar to that of \cite[Th.\,64.2]{Munkres84}.
\end{proof}

It follows from Proposition \ref{prop:triup} that the chain complex $(\wt\ccC_{\cT^\vee,\bM,\bbullet}^\smallsquare(M,\cF),\partial)$ also computes the relative homology $\coH_\bbullet(M,\bM;\cF)$.

\section{Poincaré lemma for currents}\label{app:currents}
In this section, we work in a local setting. We let $U$ be a convex open subset of $\RR^m$ (with coordinates $x_1,\dots,x_m$) containing the origin, we fix $r$ such that $1\leq r\leq m$, we set $g(x)=\textstyle\prod_{i=1}^rx_i$ and
\begin{align*}
M&=\{x_i\geq0\mid i=1,\dots,r\}\cap U,& \partial M&=\{g(x)=0\}\cap M,\\
 M^\circ&=\{g(x)>0\}\cap M& U^\circ&=\{g(x)\neq0\}.
\end{align*}
A $C^\infty$ function on $M$ can be defined either as the restriction to $M$ of a $C^\infty$ function defined on some neighbourhood of $M$ in $U$, or such that the derivatives to any order computed in $M$ exist and are continuous in $M$. It is known that both definitions define the same sheaf of functions (\eg by mirroring a function on $M$ with respect to the hyperplanes $x_i=0$, $i=1,\dots,r$). The first definition means that the sheaf $\cC^\infty_M$ is the sheaf-theoretic restriction of $\cC^\infty_U$.
 
\Subsection{Functions with moderate growth and rapid decay along \texorpdfstring{$\bM$}{bM}}
Recall that we work in a local setting made precise at the beginning of this section \ref{app:currents}.

\begin{defi}\label{def:rdmod}\mbox{}
\begin{enumerate}
\item
A $C^\infty$ function $f$ on $U^\circ$, \resp on $M^\circ$, has moderate growth along $g^{-1}(0)$, \resp along $\bM$, if, for any relatively compact open subset $W\subset U$, \resp $W\subset M$, and any multi-index $\alpha$, there exists $N\geq0$ and $C>0$ such that~$\partial^\alpha f_{|W^\circ}$ is bounded by $C|g|^{-N}$.
\item
A $C^\infty$ function $f$ on $U$, \resp $M$, has rapid decay along $g^{-1}(0)$, \resp along $\bM$, if all its derivatives vanish at any point of $g^{-1}(0)$, \resp $\bM$. Equivalently, the function $f/g^N$ is $C^\infty$ for every $N$. A $C^\infty$ function $f$ on $U^\circ$, \resp $M^\circ$ is said to have rapid decay along $g^{-1}(0)$, \resp along $\bM$, if its extension by $0$ to $U$, \resp to $M$, is $C^\infty$ with rapid decay.
\end{enumerate}
\end{defi}

The corresponding sheaves are denoted respectively by $\cC_U^{\infty,\rmod}$, $\cC_U^{\infty,\rrd}$ and $\cC_M^{\infty,\rmod}$, $\cC_M^{\infty,\rrd}$. Let~$x_o$ be a point in the intersection of exactly $\ell$ smooth components of $g^{-1}(0)$. Then, in the neighbourhood of $x_o$, $U^\circ$ has $2^\ell$ connected components and $M^\circ$ is one of them. A $C^\infty$ function with moderate growth, \resp rapid decay, on such a neighbourhood is nothing but the family of its restrictions to the connected components, since these restrictions trivially  glue together along~$g^{-1}(0)$.

The corresponding de~Rham complexes on $M$ are denoted by $(\cE_M^{\rmod,\cbbullet},\rd)$ and $(\cE_M^{\rrd,\cbbullet},\rd)$ (we~omit the boundary $\bM$ in the notation). On the other hand, we will also consider the sheaf of $C^\infty$ functions on $M$ with poles along $\partial M$, that we simply denote by $\cC_M^\infty(*)$. It is a subsheaf of $\cC_M^{\infty,\rmod}$.

\begin{prop}[Poincaré lemma]\label{prop:rrdZcinf}
The complexes $(\cE_M^{\rmod,\cbbullet},\rd)$ and $(\cE_M^{\rrd,\cbbullet},\rd)$ have cohomology in degree zero only, and are a resolution of $j_*\CC_{M^\circ}=\CC_M$ and $j_!\CC_{M^\circ}$ respectively.
\end{prop}

\begin{proof}
We prove the statements on global sections. Let $\eta=\sum_{|I|=p}\eta_I\rd x_I\in\cE^{\rrd,p}(M)$ (\resp $\eta\in\cE^{\rmod,p}(M^\circ)$) be such that $\rd\eta=0$ and $p\geq1$. There is an explicit formula (see for example \cite[(1.23)]{Demailly12}) computing $\psi\in\cE^{p-1}(M)$ such that $\rd\psi=\eta$:
\[
\psi(x)=\sum_{\substack{|I|=p\\k\in[1,p]}}\biggl(\int_0^1t^{p-1}\eta_I(tx)\rd t\biggr)(-1)^{k-1}x_{i_k}\rd x_{i_1}\wedge\cdots\wedge\wh{\rd x_{i_k}}\wedge\cdots\wedge\rd x_{i_p}.
\]
This formula shows that $\psi$ belongs to $\cE^{\rrd,p-1}(M)$ (\resp to $\cE^{\rmod,p-1}(M^\circ)$), hence the first assertion. The second assertion in both cases is clear.
\end{proof}

\subsection{Poincaré lemma for currents}\label{subsec:currentsrdmod}
The sheaf of distributions $\Db_M$ on $M$ is defined as the sheaf-theoretic restriction to $M$ of $\Db_U$. As in \cite{deRham73,deRham84}, we define the chain complex of currents $(\Cb_{M,\bbullet},\partial)$. By definition, it is the sheaf-theoretic restriction to $M$ of the complex $\Cb_{U,\bbullet}$ that we consider now. A~current $T$ of dimension $q$ on $U$ can be paired with a test $q$-form $\varphi$ on~$U$ ($C^\infty$ with compact support) and the Stokes formula holds: $\langle\partial T,\varphi\rangle=\langle T,\rd\varphi\rangle$. If $\omega$ is a $C^\infty$ $(m-q)$-form, it defines a $q$\nobreakdash-dimensional current by the formula $\langle T_\omega,\varphi\rangle=\int_U\omega\wedge\varphi$. We can consider a current as a differential form with distributional coefficients, \ie we can write $T = \sum_{\#J=m-q}\theta_J\rd x_J$ with $\theta_J\in\Db(U)$. When considered as a form, the differential $\rd$ on currents extends that on $C^\infty$ forms. The relation between $\rd$ and $\partial$ is given, for a current of dimension $q$ (\ie degree $m-q$) by
\[
\partial T_q=(-1)^{m-q+1}\rd T_q.
\]
For a tensor product taken over $\CC$, like $(\Cb_{M,\bbullet},\partial)\otimes(\cE_M^{\rrd,\cbbullet},\rd)$, the boundary is given by the formula, for a current $T_q\otimes\omega^p$ (of dimension $q-p$):
\begin{equation}\label{eq:tensorcurrents}
\partial(T_q\otimes\omega^p)=(-1)^p(\partial T_q\otimes\omega^p-T_q\otimes\rd\omega^p)=(-1)^p\partial T_q\otimes\omega^p+T_q\otimes\partial\omega^p.
\end{equation}

\begin{prop}[Poincaré lemma for $(\Cb_{M,\bbullet},\partial)$]\label{prop:poincarecurrentsrdZ}
The complex of currents on $M$ satisfies
\[
\cH_q(\Cb_{M,\bbullet},\partial)=\begin{cases}
0&\text{if }q\neq m,\\
\CC_M&\text{if }q=m.
\end{cases}
\]
More precisely, the inclusion $(\cE_M^{m-\cbbullet}\rd)\hto(\Cb_{M,\bbullet},\partial)$ is a quasi-isomorphism.
\end{prop}

\begin{proof}
We prove the result on an open subset $U$ of $\RR^m$, and we obtain the proposition by sheaf\nobreakdash-theoretic restriction to $M\cap U$. We refer to \cite[\S2.D.4]{Demailly12} for the proof of the lemma below, originally in \cite{deRham73,deRham84}. Let \hbox{$T\in\Cb_q(U)$}.

\begin{lemma}\label{lem:poincarecurrentsrdZ}
For any $\epsilon\in(0,1)$, there exist $\CC$-linear morphisms
\[
R_\epsilon:\Cb_q(U)\to\Cb_q(U)\quad\text{and}\quad S_\epsilon:\Cb_q(U)\to\Cb_{q+1}(U)
\]
such that
\begin{enumerate}
\item\label{lem:poincarecurrentsrdZ1}
$R_\epsilon$ takes values in $\cE^{m-q}(U)$,
\item\label{lem:poincarecurrentsrdZ2}
$R_\epsilon(T)-T=\partial S_\epsilon(T)+S_\epsilon(\partial T)$,
\item\label{lem:poincarecurrentsrdZ3}
$R_\epsilon(\partial T)=\partial R_\epsilon(T)$ and $\lim_{\epsilon\to0}R_\epsilon(T)=T$ weakly.
\end{enumerate}
\end{lemma}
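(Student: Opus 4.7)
The plan is to follow the classical de~Rham construction by regularization through convolution with a smooth mollifier, as indicated by the reference to \cite[\S2.D.4]{Demailly12} and to \cite{deRham73,deRham84}. First, fix a nonnegative mollifier $\rho\in C_c^\infty(\RR^m)$ with support in the unit ball and $\int_{\RR^m}\rho\,\rd x=1$, and set $\rho_\epsilon(x)=\epsilon^{-m}\rho(x/\epsilon)$. Since the statement is local, I work on a relatively compact open subset $V\Subset U$ and take $\epsilon$ smaller than the distance from $V$ to $\partial U$; multiplying currents by a cutoff supported near $V$ reduces the general case to one where convolution is well defined.

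Define $R_\epsilon(T)$ by convolution of the current with $\rho_\epsilon$, i.e., coefficientwise on the distributional coefficients in the decomposition $T=\sum_{|J|=m-q}\theta_J\,\rd x_J$. Equivalently, for a test form $\varphi$,
\[
\langle R_\epsilon T,\varphi\rangle=\langle T,\rho_\epsilon\ast\varphi\rangle.
\]
By standard properties of convolution of distributions with smooth compactly supported functions, $R_\epsilon(T)$ has $C^\infty$ coefficients, giving assertion~\eqref{lem:poincarecurrentsrdZ1}. The commutation $R_\epsilon(\partial T)=\partial R_\epsilon(T)$ in~\eqref{lem:poincarecurrentsrdZ3} is automatic because convolution commutes with partial derivatives, and the weak convergence $R_\epsilon(T)\to T$ follows from the fact that $\rho_\epsilon\ast\varphi\to\varphi$ in $C^\infty$ for each fixed test form $\varphi$ with support in~$V$.

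For the homotopy $S_\epsilon$, I would package the family $\{R_s\}_{s\in[0,\epsilon]}$ as a single operator using a Cartan-type formula. Concretely, consider the smooth kernel $K(s,x,y)=\rho_s(x-y)$ on $(0,\epsilon]\times V\times U$, so that $R_sT$ is obtained by integrating $T_y$ against $K(s,x,y)$ in $y$. The $(m-q+1)$-dimensional current
\[
S_\epsilon(T)(x)=\int_0^\epsilon \frac{\partial}{\partial s}R_s(T)(x)\,\rd s\wedge(\text{``$\rd s$'' absorbed})
\]
—made precise as the pushforward to $V$ of a differential form built from $K$ against $T$ on $[0,\epsilon]\times V\times U$—satisfies the chain-homotopy identity \eqref{lem:poincarecurrentsrdZ2} by Stokes's formula applied to the product $[0,\epsilon]\times V\times U$, using $R_0=\id$ (in the weak limit) at $s=0$ and $R_\epsilon$ at $s=\epsilon$. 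This is exactly the homotopy formula carried out in detail in the references above.

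The main obstacle is purely bookkeeping: one must handle the fact that $R_0$ is only the identity in a weak sense, and one must check that the convolution and the homotopy integral make sense as currents on the open set $U$ (not all of $\RR^m$). Both issues are resolved by the standard device of multiplying by compactly supported cutoffs and shrinking $\epsilon$; there is no genuine analytic difficulty beyond the classical regularization theory, and the lemma is then obtained by direct verification of \eqref{lem:poincarecurrentsrdZ1}--\eqref{lem:poincarecurrentsrdZ3}.
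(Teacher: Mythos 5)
You have the right guiding idea — regularize by convolution with a mollifier and build the chain homotopy from a Cartan-type formula obtained by averaging over the mollification parameter — and this is indeed how the cited sources (de~Rham~\S15, Demailly~\S2.D.4) proceed. However, the device you propose for handling the fact that $U\subsetneq\RR^m$ does not work and hides the only genuinely delicate point of the lemma.

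The issue is your reduction ``multiplying currents by a cutoff supported near $V$ reduces the general case to one where convolution is well defined.'' If one replaces $T$ by $\chi T$ and convolves, property~\eqref{lem:poincarecurrentsrdZ3} is destroyed: $\partial(\chi T)=\chi\,\partial T\pm\rd\chi\wedge T$, so $R_\epsilon(\partial T)\neq\partial R_\epsilon(T)$, and the weak limit of $R_\epsilon T$ is $\chi T$, not $T$. More importantly, the lemma is not a stalk-level statement: it asserts operators $R_\epsilon,S_\epsilon$ on \emph{all} of $\Cb_q(U)$ for each $\epsilon\in(0,1)$, and this is precisely what the proof of Proposition~\ref{prop:poincarecurrentsrdZ} needs, since it uses $R_\epsilon$ as a chain quasi-isomorphism $(\Cb_\bbullet(U),\partial)\to(\cE^{m-\cbbullet}(U),\rd)$ \emph{before} sheafifying. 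A cutoff-and-shrink argument cannot yield such global operators. The actual construction avoids cutting the current: one makes the regularization scale depend on the point, e.g.\ by fixing a smooth $\eta:U\to(0,\infty)$ with $B(x,\eta(x))\subset U$ and small gradient, and averaging over $a\in B(0,1)$ the pushforwards by the compactly-supported-perturbation diffeomorphisms $x\mapsto x+\epsilon\eta(x)a$, with weight $\rho(a)$; the homotopy $S_\epsilon$ then comes from Cartan's formula applied to the family $t\mapsto x+t\epsilon\eta(x)a$ and averaging in $a$. (Since the present $U$ is convex and contains the origin, one can alternatively first push forward by the dilation $x\mapsto(1-\epsilon)x$, which maps $U$ into a set at distance $\gtrsim\epsilon$ from $\partial U$, and then convolve at a comparable scale — but one still needs the Cartan homotopy for the dilation as well.) Your description of $S_\epsilon$ as ``$\int_0^\epsilon \partial_s R_s(T)\,\rd s$'' is also only heuristic: that integral reproduces $R_\epsilon T-T$, not a decomposition $\partial S+S\partial$; the correct object is the averaged interior-product/integration-along-the-flow operator, as you gesture at with Stokes on $[0,\epsilon]\times V\times U$, but it needs to be written out. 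In short: correct architecture, but the boundary-safe mollification is the actual content of the lemma and your proposed fix for it is wrong.
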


In particular, $R_\epsilon$ is a morphism of complexes $(\Cb_\bbullet(U),\partial)\to(\cE^{m-\cbbullet}(U),\rd)$, and \eqref{lem:poincarecurrentsrdZ2} implies that it is a quasi-isomorphism. Since $(\cE^{m-\cbbullet}(U),\rd)$ has homology concentrated in degree $\cbbullet =m$, it follows that the same holds for $(\Cb_\bbullet(U),\partial)$. Then we can sheafify the construction.
\end{proof}

\begin{remark}\label{rem:regularization}
Arguing as in \cite[\S15, Th.\,12]{deRham84},
we can globalize the construction on a manifold with corners.
More precisely, given a covering $(U_i)_{i\in I}$ of $M$ by charts, and $\epsilon_i>0$ small enough ($i\in I$), one can construct morphisms
\[
R_\epsilonb:\Cb_{M,\bbullet}\to\cE_M^{m-\cbbullet}\quad\text{and}\quad S_\epsilonb:\Cb_{M,\bbullet}\to\Cb_{M,\bbullet+1}
\]
such that the conclusion of Lemma \ref{lem:poincarecurrentsrdZ} holds for any $T\in\Gamma(M,\Cb_{M,\bbullet})$, up to replacing $\epsilon$ with $\epsilonb=(\epsilon_i)_{i\in I}$. In particular, $R_\epsilonb:(\Cb_{M,\bbullet},\partial)\to(\cE_M^{m-\cbbullet},\rd)$ is a quasi-isomorphism.
\end{remark}

Let us end by recalling the computation of the Kronecker index made by de~Rham. Let $T_p$ and~$T^\vee_{m-p}$ be two currents of complementary dimension on $U$. If both currents are closed, let us choose a decomposition $T_p=\Theta^{m-p}+\partial S_{p+1}$ and $T^\vee_{m-p}=\Theta^{\vee,p}+\partial S^\vee_{m-p+1}$, where $\Theta^{m-p}$ and $\Theta^{\vee,p}$ are $C^\infty$ closed forms. Then $\Bpairing_\dR(T_p,T^\vee_{m-p})$ is by definition the integral $\Qpairing(\Theta^{m-p},\Theta^{\vee,p})$ of the wedge product and is independent of the choices. Without the closedness assumption, but if one of the supports is compact, the intersection $\Bpairing_\dR(T_p,T^\vee_{m-p})$ is defined by the limit, when it exists
\[
\lim_{\epsilon',\epsilon\to0} \Qpairing(R_{\epsilon'}(T_p),R_\epsilon(T^\vee_{m-p}))=\lim_{\epsilon\to0}\langle T_p,R_\epsilon(T^\vee_{m-p})\rangle.
\]

We now consider the setting of the end of Section \ref{sec:chains}. Let us choose a simplex $\sigma_p$ in $M$ not contained in~$\partial M$ and let $\ov D(\sigma_p)$ denote the dual cell in $M^\circ$, which intersects $\sigma_p$ at its barycenter $\wh\sigma_p$. Let us set $T_p=\int_{\sigma_p}$, $T_{m-p}^\vee=\int_{\ov D(\sigma_p)}$ and $\Bpairing_\dR(\sigma_p,\ov D(\sigma_p))= \Bpairing_\dR(T_p,T^\vee_{m-p})$.

\begin{prop}[{\cite[p.\,85--86]{deRham84}}]\label{prop:Kronecker}
Under the previous assumptions, the intersection $\Bpairing_\dR(\sigma_p,\ov D(\sigma_p))$ exists and is given by the formula
\[
\Bpairing_\dR(\sigma_p,\ov D(\sigma_p))=\pm1,
\]
where $\pm$ is the orientation change between $\sigma_p\times\ov D(\sigma_p)$ and $M^\circ$.
\end{prop}

\begin{proof}
Let us choose coordinates $(x',x'')=(x_1,\dots,x_p,x_{p+1},\dots,x_m)$ so that $\sigma_p$ (with orientation) (\resp $\ov D(\sigma_p)$) is contained in the coordinate plane $(x_1,\dots,x_p)$ (\resp $(x_{p+1},\dots,x_m)$). The formula for $R_\epsilon$ in Lemma \ref{lem:poincarecurrentsrdZ}~is:\vspace*{-3pt}
\begin{align*}
R_\epsilon(T_{m-p}^\vee)&=\theta_{m-p,\epsilon}(x)\rd x_1\wedge\cdots\wedge\rd x_p,\\
\tag*{with}\theta_{m-p,\epsilon}(x)&=\int_{\ov D(\sigma_p)}\hspace*{-2mm}\frac{\chi_\epsilon((x-y)/\psi(y))}{\psi(y)^m}\,\rd y_{p+1}\wedge\cdots\wedge\rd y_m,
\end{align*}
where $\chi:B(0,1)\to\RR_+$ is any $C^\infty$-function with compact support and integral equal to $1$ and, for~$\epsilon>0$,
let $\chi_\epsilon(v)=\epsilon^{-m}\chi(v/\epsilon)$ with support in $B(0,\epsilon)$. Hence, if we assume that $\chi$ is even with respect to the variables $x''$,
\begin{align*}
\Bpairing_\dR(\sigma_p,\ov D(\sigma_p))&=\lim_{\epsilon\to0}\int_{\sigma_p}\rd x_1\wedge\cdots\wedge\rd x_p\cdot \theta_{m-p,\epsilon}(x')\\
&=\lim_{\epsilon\to0}\int_{\sigma_p\times\ov D(\sigma_p)}\hspace*{-4mm}\frac{\chi_\epsilon((x',-x'')/\psi(x''))}{\psi(x'')^m}\,\rd x_1\wedge\cdots\wedge\rd x_m=\pm1
\end{align*}
by the properties of $\chi$.
\end{proof}

\Subsection{Poincaré lemma for current with moderate growth and rapid decay}
Let us first consider the sheaf $\Db^\rmod_U$ on $U$, defined as the subsheaf of $j_*\Db_{U^\circ}$ consisting of distributions extendable to $U$. Its space of sections on an open set $W\subset U$ is the topological dual of the space of $C^\infty$ functions on $W$ with compact support and with rapid decay along $g^{-1}(0)$, endowed with its usual family of semi-norms. It~follows from \cite[Chap.\,VII]{Malgrange66} that $\Db^\rmod_U=\cC_U^\infty(*)\otimes_{\cC_U^\infty}\Db_U$, where we have set \hbox{$\cC_U^\infty(*)=\cC_U^\infty[1/g]$.} We also have an exact sequence\vspace*{-3pt}
\begin{equation}\label{eq:exdist}
0\to\Db_{[g^{-1}(0)]}\to\Db_U\to\Db_U^\rmod\to0,
\end{equation}
where $\Db_{[g^{-1}(0)]}$ denotes the subsheaf of distributions supported on~$g^{-1}(0)$.

The sheaf $\Db^\rmod_M$ is defined as the subsheaf of $j_*\Db_{M^\circ}$ consisting of distributions extendable to~$M$, and its space of section on an open set $V\subset M$ is the topological dual of the space of $C^\infty$ functions on $V$ with compact support and with rapid decay along $\bM$, endowed with its usual family of semi-norms.

Near a point $x_o\in g^{-1}(0)$ where $g^{-1}(0)$ has exactly $\ell$ components, the space of rapid decay functions decomposes as the direct sum of the spaces of rapid decay functions in each local connected component of $U^\circ$, and by duality so does the space of moderate distributions in this neighbourhood. If $x_o\in\bM$, then the space of moderate distribution on the intersection of this neighbourhood with~$M$ is one of the summands.

\begin{defi}[Currents with moderate growth]
We set
\[
\Cb_{U,p}^\rmod = \cE_U^{m-p}\otimes_{\cC_U^\infty}\Db_U^\rmod\quad\text{and}\quad\Cb_{M,p}^\rmod=\cE_M^{m-p}\otimes_{\cC_M^\infty}\Db_M^\rmod.
\]
\end{defi}

In a way similar to currents, a current $T$ of dimension $q$ with moderate growth on $M$ can be paired with a test $q$-form $\varphi$ on $M$ with rapid decay and Stokes formula holds.

\begin{prop}[Poincaré lemma for $(\Cb_{M,\bbullet}^\rmod,\partial)$]\label{prop:Cmod}
The chain complex $(\Cb_{M,\bbullet}^\rmod,\partial)$ satisfies Poincaré lemma as in Proposition \ref{prop:poincarecurrentsrdZ}.
\end{prop}

\begin{proof}
We will first work on $U$ and then restrict to $M$, with the cohomological version $(\Cb_M^{\rmod,\cbbullet},\rd)$, that we wish to prove to be a resolution of its $\cH^0$, that is, $\CC_M$. In other words, we wish to prove that the natural morphism $(\Cb_M^\cbbullet,\rd)\to(\Cb_M^{\rmod,\cbbullet},\rd)$ is a quasi-isomorphism.

\begin{lemma}
The complex $(\Cb_{[g^{-1}(0)]}^\cbbullet,\rd)$ has cohomology in degree one only and, for $x_o$ in the intersection of $\ell$ smooth components of $g^{-1}(0)$, we have $\dim\cH^1(\Cb_{[g^{-1}(0)]}^\cbbullet,\rd)_{x_o}=2^\ell-1$.
\end{lemma}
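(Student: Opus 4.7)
The statement is local on $U$, so I work in a coordinate chart around $x_o$ in which $g = x_1 \cdots x_\ell$ and $g^{-1}(0) = Z_1 \cup \cdots \cup Z_\ell$ with $Z_i = \{x_i = 0\}$. The strategy is to identify the complex $\Cb_{[g^{-1}(0)]}^\cbbullet = \Gamma_{[g^{-1}(0)]}(\Cb_U^\cbbullet)$ with the derived local-cohomology complex $\bR\Gamma_{[g^{-1}(0)]}(\CC_U)$, and then to compute the latter via the standard distinguished triangle
\[
\bR\Gamma_{[g^{-1}(0)]}(\CC_U) \to \CC_U \to \bR j_{*}\CC_{U^\circ} \xrightarrow{~+1~},
\]
where $j\colon U^\circ = U\setminus g^{-1}(0)\hookrightarrow U$ denotes the open inclusion.

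\medskip

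\textbf{Main steps.} The sheaves $\Cb_U^k$ are $\cC_U^\infty$-modules and hence fine; combined with Proposition \ref{prop:poincarecurrentsrdZ}, the complex $\Cb_U^\cbbullet$ is a resolution of $\CC_U$ by $\Gamma_{[g^{-1}(0)]}$-acyclic sheaves (\cf \cite[\S2.5]{K-S90}), so $\Cb_{[g^{-1}(0)]}^\cbbullet$ really does compute $\bR\Gamma_{[g^{-1}(0)]}(\CC_U)$. Next, the stalk of $\bR j_{*}\CC_{U^\circ}$ at $x_o$ is the singular cohomology of a small punctured neighborhood, namely $V\cap U^\circ = V\setminus(Z_1\cup\cdots\cup Z_\ell)$ for a small ball $V$ around $x_o$. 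Since the complement of the $\ell$ coordinate hyperplanes in such a ball breaks up into exactly $2^\ell$ connected components---one for each sign pattern $(\mathrm{sgn}(x_1),\ldots,\mathrm{sgn}(x_\ell))\in\{\pm 1\}^\ell$, each of them contractible---we obtain $(\bR j_{*}\CC_{U^\circ})_{x_o} = \CC^{2^\ell}$, concentrated in degree zero, and the natural map from $\CC=(\CC_U)_{x_o}$ is the diagonal $\Delta$, since a locally constant function on $V$ restricts to the same value on each component of $V \cap U^\circ$.

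\medskip

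\textbf{Conclusion and main obstacle.} Passing to stalks at $x_o$ in the distinguished triangle produces the exact sequence
\[
0 \to \cH^0(\Cb_{[g^{-1}(0)]}^\cbbullet)_{x_o} \to \CC \xrightarrow{~\Delta~} \CC^{2^\ell} \to \cH^1(\Cb_{[g^{-1}(0)]}^\cbbullet)_{x_o} \to 0,
\]
together with the vanishing of $\cH^k$ for $k\geq 2$. Because $\Delta$ is injective with cokernel of dimension $2^\ell-1$, we obtain $\cH^0=0$, $\cH^1_{x_o}\cong\CC^{2^\ell-1}$, and $\cH^k=0$ for $k\geq 2$, which is the content of the lemma; at points $x\notin g^{-1}(0)$ the stalk of $\Cb_{[g^{-1}(0)]}^\cbbullet$ is zero and there is nothing to check. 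The only delicate point in the argument is the initial step: verifying that $\Gamma_{[g^{-1}(0)]}$ applied termwise to the resolution $\Cb_U^\cbbullet$ computes the derived functor. Once the $\Gamma_{[g^{-1}(0)]}$-acyclicity of the fine sheaves $\Cb_U^k$ is granted, what remains is a purely topological computation of the link of a normal-crossings divisor.
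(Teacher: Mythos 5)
Your proposal has a genuine gap at the point you yourself flag as ``delicate,'' and the gap cannot be filled the way you suggest. You assert that the sheaves $\Cb_U^k$, being $\cC^\infty_U$-modules and hence fine, are $\Gamma_{[g^{-1}(0)]}$-acyclic, so that applying $\Gamma_{[g^{-1}(0)]}$ termwise to the resolution $\Cb_U^\cbbullet$ computes $\bR\Gamma_{[g^{-1}(0)]}(\CC_U)$. This is false. Fine (or soft) sheaves are acyclic for $\Gamma_\Phi$ when $\Phi$ is a \emph{paracompactifying} family of supports, but the family of closed subsets of a hypersurface $Z=g^{-1}(0)$ is not paracompactifying (a nonempty closed subset of $Z$ has no closed neighbourhood contained in $Z$), and K-S \S2.5 establishes $\Gamma_Z$-acyclicity for \emph{flabby}, not soft, sheaves. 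In fact, $\cH^1_Z(\Cb_U^j)\neq 0$ in general: since $\cH^1_Z(\cF)=\operatorname{coker}\bigl(\cF\to j_*j^*\cF\bigr)$ for a single sheaf $\cF$, one sees already for $m=1$, $Z=\{0\}$, $\cF=\Db_U$ that this cokernel is nonzero at the origin, because a germ such as $e^{1/x^2}$ on $(0,\epsilon)$ has no distributional extension across $0$. So the $E_1$ page of the relevant spectral sequence is not concentrated in the bottom row, and the naive identification of $\Gamma_Z(\Cb_U^\cbbullet)$ with $\bR\Gamma_Z(\CC_U)$ does not follow from softness.

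This is not a minor technicality: via the short exact sequence \eqref{eq:exdist} (tensored with $\cE_U^\cbbullet$) and the Poincar\'e lemma for $\Cb_U^\cbbullet$, the statement ``$\Gamma_Z(\Cb_U^\cbbullet)$ computes $\bR\Gamma_Z(\CC_U)$'' is equivalent to the statement that $\Cb_U^{\rmod,\cbbullet}$ resolves $j_*\CC_{U^\circ}$ --- and that is precisely Proposition \ref{prop:Cmod}, which the paper derives \emph{from} this lemma. Assuming the acyclicity up front is therefore essentially circular; the lemma's real content is the analytic fact that the failure of distributions to extend across $g^{-1}(0)$ is exactly accounted for by the $(2^\ell-1)$-dimensional space of ``extra'' boundary components, and that fact has to be proved directly. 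The paper does this by working with the explicit structure $\Db_{[Z]}\simeq\Db_Z[\partial_{x_1},\dots,\partial_{x_\ell}]$ for $Z$ a coordinate subspace (a Koszul-type computation showing the cohomology sits in degree $\ell$), then inducting over the union of coordinate hyperplanes via the exact sequences of \cite[Prop.\,VII.1.4]{Malgrange66}. Your topological computation of the link (that $\bR j_*\CC_{U^\circ}$ has stalk $\CC^{2^\ell}$ in degree $0$ and the map from $\CC_U$ is the diagonal) is correct and explains \emph{why} the answer is $2^\ell-1$, but it can only be invoked as a proof after, not instead of, establishing the quasi-isomorphism $\Gamma_Z(\Cb_U^\cbbullet)\simeq\bR\Gamma_Z(\CC_U)$.
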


\begin{proof}
Let $Z\subset U$ denote the $\ell$-th intersection
defined by \hbox{$x_1=\cdots=x_\ell=0$},
so that $\Db_{[Z]}\simeq\Db_{Z}[\partial_{x_1},\dots,\partial_{x_\ell}]$. We write $(\Cb_{[Z]}^\cbbullet,\rd)$ as the simple complex associated to the $\ell$-cube with $i$-th edges all equal~to
\[
\Cb_{Z}^\cbbullet[\partial_{x_1},\dots,\partial_{x_\ell}]\To{\partial_{x_i}}\Cb_{Z}^\cbbullet[\partial_{x_1},\dots,\partial_{x_\ell}],
\]
which is quasi-isomorphic to $\Cb_{Z}^{\cbbullet-\ell}$, so $\Cb_{[Z]}^\cbbullet$ has cohomology in degree $\ell$ only, equal to the constant sheaf~$\CC_{Z}$ (by the Poincaré lemma for $Z$).

Let us set $Z_i=\{x_i=0\}$ and $Z^{(\ell)}=\bigcup_{i=1}^\ell Z_i$. We prove by induction on $\ell$ that the lemma hods for $Z^{(\ell)}$, the case $\ell=1$ begin proved in the first point. According to \cite[Prop.\,VII.1.4]{Malgrange66}, the natural sequence of complexes
\[
0\to(\Cb_{[Z_\ell^{(\ell-1)}]}^\cbbullet,\rd)\to(\Cb_{[Z_\ell]}^\cbbullet,\rd)\oplus(\Cb_{[Z^{(\ell-1)}]}^\cbbullet,\rd)\to(\Cb_{[Z^{(\ell)}]}^\cbbullet,\rd)\to0
\]
is exact. Let us restrict the complexes on $Z=\bigcap_{i=1}^\ell Z_i$. By induction, the middle complex has cohomology in degree one only, of dimension $1+(2^{\ell-1}-1)=2^{\ell-1}$. Also by induction, the left complex has cohomology in degree $2$ only, of dimension $2^{\ell-1}-1$. This completes the proof.
\end{proof}

The exact sequence \eqref{eq:exdist} gives rise to an exact sequence of complexes
\[
0\to\Cb_{[g^{-1}(0)]}^\cbbullet\to\Cb_U^\cbbullet\to\Cb_U^{\rmod,\cbbullet}\to0
\]
and the lemma above reduces it to an exact sequence
\[
0\to\CC_U\to\cH^0(\Cb_U^{\rmod,\cbbullet})\to\cH^1(\Cb_{[g^{-1}(0)]}^\cbbullet)\to\nobreak0.
\]
At a point $x_o\in g^{-1}(0)$ at the intersection of exactly $\ell$ smooth components, $\cH^0(\Cb_U^{\rmod,\cbbullet})_{x_o}$ has thus dimension $2^\ell$. If $U^\circ_{x_o,a}$ are the $2^\ell$ local connected components of~$U^\circ_{x_o}$, then
\[
\cH^0(\Cb_U^{\rmod,\cbbullet})_{x_o}=\coH^0(\Cb^{\rmod,\cbbullet}(U^\circ_{x_o}))=\bigoplus_a\coH^0(\Cb^{\rmod,\cbbullet}(U^\circ_{x_o,a})),
\]
so that each term in the sum is isomorphic to~$\CC$. In particular, for $x_o\in\bM$, considering the component $M^\circ_{x_o}$, the natural composed morphism (after sheaf-theoretically restricting to $M$) $\Cb_{U|M}^\cbbullet\to\Cb_{U|M}^{\rmod,\cbbullet}\to\Cb_M^{\rmod,\cbbullet}$ induces an isomorphism on the $\cH^0$ at each $x_o$, and hence is an isomorphism.
\end{proof}

We define the complex of currents with rapid decay as $(\Cb_{M,\bbullet}^\rmod,\partial)\otimes(\cE_M^{\rrd,\cbbullet},\rd)$, with boundary operator given by \eqref{prop:poincarecurrentsrdZ}. From Propositions \ref{prop:Cmod} and \ref{prop:rrdZcinf}, we thus obtain:

\begin{prop}[Poincaré lemma for $(\Cb_{M,\bbullet}^\rrd,\partial)$]\label{prop:Crd}
The chain complex $(\Cb_{M,\bbullet}^\rrd,\partial)$ has homology in degree $m$ only, which is equal to $j_!\CC_{M^\circ}$.
\end{prop}

\begin{remark}\label{rem:BM}
If $M$ is compact, the homology of the complex $(\Gamma(M,\Cb_{M,\bbullet}^\rrd),\partial)$ is isomorphic to $\coH_\bbullet(M^\circ,\CC)$, while that of $(\Gamma(M,\Cb_{M,\bbullet}^\rmod),\partial)$ is isomorphic to the Borel-Moore homology $\coH^{\scriptscriptstyle\mathrm{BM}}_\bbullet(M^\circ,\CC)$.
\end{remark}

\subsection{Dolbeault lemmas}
We recall here the various Dolbeault lemmas that we have used in the main text. Let $X$ be a complex manifold, let $\Div$ be a normal crossing divisor and let $\varpi:\wt X\to X$ be the real oriented blow-up of the components of $\Div$ (local coordinates on $\wt X$ are local polar coordinates on~$X$ with respect to a coordinate system adapted to $D$). Then $\wt X$ is a $C^\infty$ manifold with corners as in the beginning of this section, so is endowed with the sheaves $\cC^{\infty,\rmod}_{\wt X}$ and $\cC^{\infty,\rrd}_{\wt X}$ as in Definition~\ref{def:rdmod}. Moreover, one checks that the $\ov\partial$ operator defined on $\cC^\infty_X$ can be lifted to these sheaves of functions, and we can define the corresponding Dolbeault complexes $(\cE_{\wt X}^{\rmod,0,\cbbullet},\ov\partial)$ and $(\cE_{\wt X}^{\rrd,0,\cbbullet},\ov\partial)$ whose $\cH^0$ are respectively denoted by $\cA_{\wt X}^\rmod$ and $\cA_{\wt X}^\rrd$.

\begin{lemma}
These Dolbeault complexes are a resolution of their $\cH^0$.
\end{lemma}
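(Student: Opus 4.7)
The statement is local on $\wt X$, so I would work in a chart near a point of $\varpi^{-1}(\Div)$ that, in the coordinates of $X$ adapted to $\Div=\{x_1\cdots x_\ell=0\}$, lifts to the product $(S^1)^\ell\times[0,\epsilon)^\ell\times\Delta(\epsilon)^{n-\ell}$. Under $\varpi$, the Dolbeault complexes with moderate growth (\resp rapid decay) on $\wt X$ correspond to the Dolbeault complexes on the punctured polydisc $(\Delta^*(\epsilon))^\ell\times\Delta(\epsilon)^{n-\ell}$ whose coefficients have moderate growth (\resp rapid decay) along $\Div$ in the sense of Definition \ref{def:rdmod} (applied to $g=x_1\cdots x_\ell$). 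So the problem reduces to the following: given a $\ov\partial$-closed $(0,q)$-form $\alpha$ ($q\geq1$) on the polydisc with moderate growth (\resp rapid decay) along $\Div$, produce a $(0,q-1)$-form $\beta$ with the same growth behaviour such that $\ov\partial\beta=\alpha$.

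I would handle the two cases by the standard Dolbeault homotopy argument, reducing by induction on the number of antiholomorphic directions appearing in $\alpha$ to the one-variable Cauchy-Green formula
\[
\beta(x_1,x_2,\dots,x_n)=\frac1{2\pii}\int_{\Delta}\frac{f(\zeta,x_2,\dots,x_n)\,\chi(\zeta)}{\zeta-x_1}\,\rd\zeta\wedge\rd\ov\zeta,
\]
where $\chi$ is a compactly supported cutoff equal to $1$ on a smaller polydisc and $f$ is the coefficient of $\rd\ov x_1$ in what remains at each inductive step. The classical argument shows $\ov\partial\beta/\ov\partial\ov x_1=f$, and the other antiholomorphic derivatives of $\beta$ coincide with the corresponding derivatives of $f$ integrated through the kernel, which again fall into the inductive scheme. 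To conclude, one must check two things: first, that the Cauchy-Green operator preserves moderate growth and rapid decay along $\Div$; second, that the inductive reduction preserves these classes.

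The main technical point, which would be the principal obstacle, is the preservation of growth/decay by the Cauchy-Green integral. For moderate growth, if $|f(\zeta,x'')|\leq C|\zeta|^{-N}\prod_{i\geq2}|x_i|^{-N}$, a direct estimate of the integral using the identity $\frac1{\zeta-x_1}=\frac1\zeta+\frac{x_1}{\zeta(\zeta-x_1)}$ and iterating this expansion produces a bound of the form $C'|x_1|^{-N'}\prod_{i\geq2}|x_i|^{-N}$ on any smaller polydisc; the resulting $\beta$ and all its $x$-derivatives inherit such an estimate, which is exactly the moderate growth property. For rapid decay, the same computation combined with the defining property that $f$ and all its derivatives vanish to infinite order along $\Div$ shows that $\beta$ has the same vanishing; alternatively, one can observe that if $f=g^Nh$ with $h$ of moderate growth, then $\beta=g^N\beta'$ with $\beta'$ obtained by the Cauchy-Green formula applied to $h$ (suitably corrected), so the rapid-decay case reduces to the moderate case. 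These one-variable estimates go back to Malgrange \cite{Malgrange66} (and Majima \cite{Majima84} in the several-variable formulation used in \cite[App.]{Bibi93} and \cite[App.]{Hien07}), and I would simply invoke them to conclude both statements.
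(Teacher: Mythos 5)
Your reduction to the punctured polydisc misses where the lemma actually lives. The Dolbeault complexes in question are sheaves on the real blow-up $\wt X$, so the statement is local on $\wt X$: the germ at a boundary point $\wt x\in\partial\wt X$ only sees a \emph{polysector} (a product of angular sectors and discs), not a whole punctured polydisc. The chart $(S^1)^\ell\times[0,\epsilon)^\ell\times\Delta(\epsilon)^{n-\ell}$ you write down is a neighbourhood of the entire fiber $\varpi^{-1}(x)$, and reducing to $C^\infty$ forms with moderate growth or rapid decay on $(\Delta^*(\epsilon))^\ell\times\Delta(\epsilon)^{n-\ell}$ is the situation of the pushforward $\varpi_*$---that is, the \emph{next} lemma in this subsection, not this one. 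A $\ov\partial$-closed germ at $\wt x$ is given only on a polysector; the Cauchy--Green operator you invoke integrates over a full disc that is not available there, and there is no canonical extension of the data to the punctured polydisc (any extension would wreck $\ov\partial$-closedness, and for rapid decay would not even preserve the growth class, since that class on $\wt X$ is direction-dependent). The paper avoids exactly this by citing the Bochner--Martinelli formula on \emph{bounded domains with piecewise $C^1$ boundary}---a class that includes compact pieces of a polysector---for the moderate case, and by citing Majima's sectorial asymptotic analysis \cite{Majima84b} and its elaborations in \cite{Bibi97}, \cite{Mochizuki10} for the rapid decay case; both references are inherently sectorial and thus adapted to the problem, whereas your polydisc kernel is not.

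There is also a secondary gap in your ``alternative'' treatment of rapid decay. Writing $f=g^Nh_N$ with $h_N$ moderate and setting $\beta_N=g^N T(h_N)$ indeed gives $\ov\partial\beta_N=f$ with $\beta_N=O(|g|^N)$, since $g$ is holomorphic; but the $\beta_N$ are \emph{distinct} solutions for distinct $N$, differing by $\ov\partial$-closed (holomorphic) terms, so each one has only finite-order decay. Rapid decay requires a single primitive $\beta$ flat to infinite order, and passing from the family $(\beta_N)$ to such a $\beta$ needs a Mittag--Leffler/Whitney--Borel type correction-and-gluing argument. That argument is precisely the nontrivial content of the flat sectorial Dolbeault lemma that the paper delegates to Majima, and it cannot be dispatched by the one-line observation ``so the rapid-decay case reduces to the moderate case.''
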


\begin{proof}
For the moderate case, one can use the proof of Dolbeault's lemma on bounded domains of~$\CC^n$ with piecewise $C^1$ boundary using the Bochner-Martinelli kernel (\cf\eg\cite[\S III.2]{LaurentC97}). The rapid decay case is treated in \cite{Majima84b} (\cf also \cite[Lem.\,II.1.1.18]{Bibi97} and \cite[Prop.\,4.2.2]{Mochizuki10}).
\end{proof}

One can similarly define the sheaves on $X$ of $C^\infty$ functions with moderate growth or rapid decay along $\Div$, together with their corresponding Dolbeault complexes, which do not enter the frame considered at the beginning of this section. Nevertheless, one has obviously
\[
(\cE_X^{\rmod,0,\cbbullet},\ov\partial)\simeq(\varpi_*\cE_{\wt X}^{\rmod,0,\cbbullet},\ov\partial)\quad\text{and}\quad(\cE_X^{\rrd,0,\cbbullet},\ov\partial)\simeq(\varpi_*\cE_{\wt X}^{\rrd,0,\cbbullet},\ov\partial).
\]
Furthermore, $\varpi_*\cA_{\wt X}^\rmod\simeq\cO_X(*\Div)$.

\begin{lemma}
The Dolbeault complex $(\cE_X^{\rmod,0,\cbbullet},\ov\partial)$ is a resolution of $\cO_X(*D)$, and the Dolbeault complex $(\cE_X^{\rrd,0,\cbbullet},\ov\partial)$ is quasi-isomorphic to the two-term complex \hbox{$\cO_X\to\cO_{\wh\Div}$}.
\end{lemma}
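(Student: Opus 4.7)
My plan is to derive both assertions from the already established Dolbeault lemma on $\wt X$ and a form of Borel's lemma at the divisor.

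For the moderate assertion, I would push forward the resolution $\cA_{\wt X}^{\rmod}\isom(\cE_{\wt X}^{\rmod,0,\cbbullet},\ov\partial)$ along $\varpi:\wt X\to X$. The identifications $\varpi_*\cE_{\wt X}^{\rmod,0,\cbbullet}\simeq\cE_X^{\rmod,0,\cbbullet}$ and $\varpi_*\cA_{\wt X}^{\rmod}\simeq\cO_X(*\Div)$ have already been noted just before the statement. The key point to check is that applying $\varpi_*$ does not lose exactness, which reduces to the vanishing of $R^k\varpi_*\cE_{\wt X}^{\rmod,0,p}$ for $k\geq1$. This follows because $\cC^{\infty,\rmod}_{\wt X}$ admits partitions of unity with moderate growth (as needed to run Proposition \ref{prop:rrdZcinf} on $\wt X$), so each $\cE_{\wt X}^{\rmod,0,p}$ is a fine, hence $\varpi_*$-acyclic, sheaf. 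Taking $\varpi_*$ of the resolution then yields the claim.

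For the rapid decay assertion, the main tool is Borel's lemma (already invoked in Remark \ref{rem:DRinfty}), which asserts that Taylor expansion at $\Div$ is termwise surjective on the Dolbeault complex, with kernel precisely the rapid decay forms. This produces a short exact sequence of complexes of sheaves on $X$
\[
0\to(\cE_X^{\rrd,0,\cbbullet},\ov\partial)\to(\cE_X^{0,\cbbullet},\ov\partial)\to(\cE_{\wh\Div}^{0,\cbbullet},\ov\partial)\to0,
\]
where the middle term is the classical Dolbeault resolution of $\cO_X$ and the right term, obtained as the $\Div$-adic completion of the Dolbeault complex, is a resolution of $\cO_{\wh\Div}$ (by applying $\ov\partial$-Poincaré coefficient-wise in the formal direction, which is a consequence of the holomorphic $\ov\partial$-lemma with parameters). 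The long exact sequence of $\cH^\cbbullet$ then shows that $(\cE_X^{\rrd,0,\cbbullet},\ov\partial)$ is quasi-isomorphic to $\mathrm{Cone}\bigl(\cO_X\to\cO_{\wh\Div}\bigr)[-1]$, which is the two-term complex in the statement.

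The main obstacle is the Borel lemma in this relative/Dolbeault setting, namely the termwise surjectivity of the completion map $\cE_X^{0,p}\to\cE_{\wh\Div}^{0,p}$. Away from the strata of higher codimension this is the standard Borel theorem applied coefficient by coefficient in a local trivialization $\Omega^{0,p}_X$; at the deeper strata one has to glue such local extensions using a partition of unity on $X$. Once this is granted, both halves of the statement follow formally from the already proven Dolbeault lemmas on $\wt X$ and on $X$, and no further analytic input is required.
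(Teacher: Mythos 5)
Your moderate-case argument is circular. Granting the Dolbeault lemma on $\wt X$ and the fineness (hence $\varpi_*$-acyclicity, since $\varpi$ is proper) of each $\cE_{\wt X}^{\rmod,0,p}$, what you obtain is that $\varpi_*\cE_{\wt X}^{\rmod,0,\cbbullet}=\cE_X^{\rmod,0,\cbbullet}$ represents $\bR\varpi_*\cA_{\wt X}^{\rmod}$ in the derived category, i.e.\ $\cH^k(\cE_X^{\rmod,0,\cbbullet})\simeq R^k\varpi_*\cA_{\wt X}^{\rmod}$ for all $k$. The lemma to be proved is exactly the assertion that this complex is concentrated in degree $0$; the identification $\varpi_*\cA_{\wt X}^{\rmod}\simeq\cO_X(*\Div)$ gives you degree $0$, but the vanishing $R^k\varpi_*\cA_{\wt X}^{\rmod}=0$ for $k\geq1$ is \emph{not} a formal consequence of the $\varpi_*$-acyclicity of the Dolbeault terms---it is the same statement you set out to prove, merely restated. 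Nor is it automatic: the fibers of $\varpi$ over $\Div$ are tori and $\cA_{\wt X}^{\rmod}$ itself is far from fine, so there is genuine analytic content (in dimension one it is a Cauchy--Heine/Malgrange--Sibuya type vanishing). The paper sidesteps this by running the Bochner--Martinelli argument directly on $X$, exactly as in the preceding lemma on $\wt X$; that is what ``treated as in that on $\wt X$'' means.

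Your rapid-decay argument is a legitimate alternative route to the paper's citation of Bingener: the short exact sequence of Dolbeault complexes obtained from Borel's lemma, together with the classical Dolbeault lemma on $X$ and a formal Dolbeault lemma for $\cO_{\wh\Div}$, does give the cone description. Two points merit more care than you give them. First, the surjectivity of $\cE_X^{0,p}\to\cE_{\wh\Div}^{0,p}$ in the normal-crossing case is not just local Borel plus a partition of unity: one must construct a smooth extension of a \emph{compatible} family of jets along all the branches, and the compatibility along intersection strata is the nontrivial part (this is a Whitney-type extension problem). Second, showing that $(\cE_{\wh\Div}^{0,\cbbullet},\ov\partial)$ resolves $\cO_{\wh\Div}$ involves passing a Dolbeault lemma through a projective limit over nilpotent thickenings, so one should verify a Mittag--Leffler condition. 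Both are doable, and this half of your proof is a sensible replacement for the reference; but the moderate half, as written, proves nothing new.
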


\begin{proof}
The moderate case is treated as in that on $\wt X$, while the rapid decay case is proved more generally in \cite{Bingener78}.
\end{proof}

\section{Remarks on Verdier duality}\label{sec:Verdierduality}
To pass from local results on pairings to global ones, we use compatibility of Verdier duality with proper pushforward. In this section is made precise an ``obvious result'' (Corollary \ref{cor:accperfect}) which we could not find in the literature. We refer to \cite[Chap.\,2\,\&\,3]{K-S90} for standard results of sheaf theory.

We fix a field $\kk$ and we work in the category of sheaves of $\kk$-vector spaces. All topological spaces we consider are assumed to be locally compact, and all maps are assumed to have finite local cohomological dimension. Let $\DD_X$ be the dualizing complex. If~\hbox{$a_X:X\to\mathrm{pt}$} denotes the constant map, one has $\DD_X=a_X^!\kk$.

\subsection{The duality isomorphism}
Let $f:X\to Y$ be a continuous map, let $\cF$ be an object of~$\catD^\rb(\kk_X)$ and~$\cG$ an object of $\catD^\rb(\kk_Y)$. There is a bifunctorial isomorphism \begin{equation}\label{eq:Verdier}
\bR f_*\RcHom(\cF, f^!\cG)\isom\RcHom(\bR f_!\cF,\cG)
\end{equation} in $\catD^\rb(\kk_Y)$ (\cf \cite[Prop.\,3.1.10]{K-S90}). By applying $\bR\Gamma(Y,\cbbullet)$, one obtains
\[
\bR\Hom(\cF,f^!\cG)\isom\bR\Hom(\bR f_!\cF,\cG),
\]
and taking cohomology in degree $0$,
\[
\Hom_{\catD^\rb(X)}(\cF,f^!\cG)\isom\Hom_{\catD^\rb(Y)}(\bR f_!\cF,\cG).
\]

There exists a natural morphism of functors $\bR f_!f^!\mto\id$. Indeed, taking $\cF=f^!\cG$ above, one finds
\[
\Hom_{\catD^\rb(X)}(f^!\cG,f^!\cG)\isom\Hom_{\catD^\rb(Y)}(\bR f_!f^!\cG,\cG)
\]
and the desired morphism is the image of the identity. 

One can thus write the duality isomorphism \eqref{eq:Verdier} as the composition of two bifunctorial morphisms (see \cite[(2.6.25)]{K-S90} for the first one)
\[
\bR f_*\RcHom(\cF, f^!\cG)\to\RcHom(\bR f_!\cF,\bR f_!f^!\cG)\to\RcHom(\bR f_!\cF,\cG).
\]

\subsection{Poincaré-Verdier duality}
We set $\bD\cF=\RcHom(\cF,\DD_X)$. One then has an isomorphism
\[
\bR\Gamma(X,\bD\cF)\isom\Hom(\bR\Gamma_\rc(X,\cF),\kk),
\]
and taking hypercohomology,
\[
\bH^\ell(X,\bD\cF)\isom\bH^{-\ell}_\rc(X,\cF)^\vee.
\]
Via this isomorphism and by using the natural duality pairing, we deduce a perfect pairing
\begin{equation}\label{eq:1}
\bH^{-\ell}_\rc(X,\cF)\otimes\bH^\ell(X,\bD\cF)\to \kk.
\end{equation}

On the other hand, one has a natural morphism $\cF\otimes \bD\cF\to\DD_X$
and, in addition,
according to \cite[(2.6.23)]{K-S90} for $f=a_X$, a morphism
\[
\bR\Gamma_\rc(X,\cF)\otimes\bR\Gamma(X,\bD\cF)\to\bR\Gamma_\rc(X,\cF\otimes \bD\cF).
\]
By composing, one obtains a morphism
\[
\bR\Gamma_\rc(X,\cF)\otimes\bR\Gamma(X,\bD\cF)\to\bR\Gamma_\rc(X,\cF\otimes\bD\cF)
\to\bR\Gamma_\rc(X,\DD_X)=\bR a_{X!}a_X^!\kk\to \kk.
\]
By taking hypercohomology, one obtains for each $\ell\in\ZZ$ a pairing
\begin{equation}\label{eq:2}
\bH^{-\ell}_\rc(X,\cF)\otimes\bH^\ell(X,\bD\cF)\to \kk.
\end{equation}

\begin{prop}\label{prop:accperfect}
The pairings \eqref{eq:1} and \eqref{eq:2} coincide. In particular, \eqref{eq:2} is a perfect pairing.
\end{prop}

For the sake of completeness, we will prove the following lemma at the end of this section. 

\begin{lemma}\label{lem:composes}
Let $\cF$ and $\cG$ be objects of $\catD^\rb(\kk_X)$ and let $f:X\to Y$ be a morphism. The two following composed natural morphisms coincide:
\begin{gather*}
\bR f_!\cF\otimes\bR f_*\RcHom(\cF,\cG)\to\bR f_!\bigl(\cF\otimes\RcHom(\cF,\cG)\bigr)\to\bR f_!\cG,\\
\bR f_!\cF\otimes\bR f_*\RcHom(\cF,\cG)\to\bR f_!\cF\otimes\RcHom(\bR f_!\cF,\bR f_!\cG)\to\bR f_!\cG.
\end{gather*}
\end{lemma}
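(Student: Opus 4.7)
The plan is to show that both morphisms in the statement arise, via the tensor-Hom adjunction, from a single morphism $\bR f_!\cF\otimes\bR f_*\RcHom(\cF,\cG)\to\bR f_!\cG$, namely the one given by the first composition. The content of the lemma is thus that the morphism appearing on the Hom-side of the second composition is constructed in exactly this way.

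First, I would recall the definition of the natural transformation
\[
\alpha:\bR f_*\RcHom(\cF,\cG)\to\RcHom(\bR f_!\cF,\bR f_!\cG).
\]
By the derived tensor-Hom adjunction in $\catD^\rb(\kk_Y)$, giving $\alpha$ is the same as giving a morphism
\[
\wt\alpha:\bR f_!\cF\otimes\bR f_*\RcHom(\cF,\cG)\to\bR f_!\cG.
\]
The standard construction (\cf \cite[Prop.\,2.6.6,\,(2.6.15),\,(2.6.23)]{K-S90}) defines $\wt\alpha$ as the composition
\[
\bR f_!\cF\otimes\bR f_*\RcHom(\cF,\cG)\To{\mathrm{proj}}\bR f_!\bigl(\cF\otimes\RcHom(\cF,\cG)\bigr)\To{\bR f_!\mathrm{ev}}\bR f_!\cG,
\]
where $\mathrm{proj}$ denotes the projection-formula morphism and $\mathrm{ev}$ the evaluation. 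In other words, by definition, $\wt\alpha$ equals the first composed morphism of the lemma.

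Second, I would unfold the second composition and recognise it as the image of $\alpha$ under the inverse adjunction. Indeed, tensoring $\alpha$ on the left with $\id_{\bR f_!\cF}$ and composing with the canonical evaluation $\bR f_!\cF\otimes\RcHom(\bR f_!\cF,\bR f_!\cG)\to\bR f_!\cG$ is, by the triangle identity for the adjunction $(\otimes,\RcHom)$, precisely the morphism $\wt\alpha$ from which $\alpha$ was produced. This is the content of the second composition, so it coincides with the first one.

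The only genuine verification is that the natural morphism $\alpha$ used in the second composition is the one defined via the projection formula as above; this is the conventional choice in \cite{K-S90}, but one should check that any other reasonable construction (\eg via $f^*\RcHom\to\RcHom(f^*,f^*)$ and the $(f^*,\bR f_*)$-adjunction) yields the same morphism. I expect this identification of conventions to be the main (minor) obstacle. Once it is settled, the proof reduces to the tautological observation above.
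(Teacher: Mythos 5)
Your approach is genuinely different from the paper's. You invoke the derived tensor--Hom adjunction to recast the lemma as a tautology: if the morphism $\alpha\colon\bR f_*\RcHom(\cF,\cG)\to\RcHom(\bR f_!\cF,\bR f_!\cG)$ appearing in the second composition is \emph{by definition} the adjoint of the first composition (projection followed by evaluation), then the second composition is just $\mathrm{ev}\circ(\id\otimes\alpha)$, which the zigzag identity returns to the first composition. You correctly identify this as the crux, and you are right that once the identification is in place the argument is immediate. The paper, by contrast, takes no stance on how $\alpha$ is constructed: it unwinds both composed morphisms at the level of presheaf sections for genuine sheaves $\cF,\cG$, checks directly that the map $[\cF\otimes\cHom(\cF,\cG)]^\sim(f^{-1}(V))\to\Gamma(V,f_*\cG)$ sheafifies to both of them, restricts supports to pass from $f_*$ to $f_!$, and finally replaces $\cG$ by an injective and $\cF$ by a flabby resolution to handle the derived version.

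The gap in your proposal is the one you yourself flag, but I would not call it minor. Your reduction transforms ``the two compositions coincide'' into ``$\alpha$ is the adjoint of the first composition'', which is an equally non-tautological statement about how the natural transformation of \cite[(2.6.25)]{K-S90} is built. If the reference constructs $\alpha$ by a different route --- e.g.\ via $f^{-1}\RcHom\to\RcHom(f^{-1},f^{-1})$ and $(f^{-1},\bR f_*)$-adjunction, as you yourself suggest --- then establishing the agreement of conventions requires unravelling both constructions at the level of sections, which is precisely the content of the paper's computation. Your reduction thus postpones the verification rather than eliminating it; and the passage to the derived setting (the need to choose flabby and injective resolutions so that $\RcHom$ and $\otimes$ are honestly represented) is also left implicit in ``the derived tensor--Hom adjunction'', whereas the paper handles it explicitly at the end. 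As a plan of attack your reduction is sound and conceptually clean, but as a proof it terminates exactly where the paper's proof begins.
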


\begin{proof}[\proofname\ of Proposition \ref{prop:accperfect}]
Taking $f=a_X$ and $\cG=\DD_X$, we have equality of the morphisms
\[
\bR\Gamma_\rc(X,\cF)\otimes\bR\Gamma(X,\bD\cF)\to\bR\Gamma_\rc(X,\cF\otimes\bD\cF)\to\bR\Gamma_\rc(X,\DD_X)
\]
and
\begin{multline*}
\bR\Gamma_\rc(X,\cF)\otimes\bR\Gamma(X,\bD\cF)\\\to\bR\Gamma_\rc(X,\cF)\otimes\Hom(\bR\Gamma_\rc(X,\cF),\bR\Gamma_\rc(X,\DD_X))
\to \bR\Gamma_\rc(X,\DD_X).
\end{multline*}
Besides, we have a commutative diagram, by considering the natural morphism \hbox{$\bR\Gamma_\rc(X,\DD_X) \to \kk$},
\[
\xymatrix{
\bR\Gamma_\rc(X,\cF)\otimes\Hom(\bR\Gamma_\rc(X,\cF),\bR\Gamma_\rc(X,\DD_X))\ar[r]\ar[d]& \bR\Gamma_\rc(X,\DD_X).\ar[d]\\
\bR\Gamma_\rc(X,\cF)\otimes\Hom(\bR\Gamma_\rc(X,\cF),\kk)\ar[r]&\kk
}
\]
Hence, both composed natural morphisms
\[
\bR\Gamma_\rc(X,\cF)\otimes\bR\Gamma(X,\bD\cF)\to\bR\Gamma_\rc(X,\bD\cF\otimes\cF)\to \kk
\]
and
\[
\bR\Gamma_\rc(X,\cF)\otimes\bR\Gamma(X,\bD\cF)\to\bR\Gamma_\rc(X,\cF)\otimes\Hom(\bR\Gamma_\rc(X,\cF),\kk)
\to \kk
\]
coincide, and we deduce the proposition for every $\ell$.
\end{proof}

Let now $\cF,\cG$ in $\catD^\rb(\kk_X)$. We have a natural isomorphism (\cf \cite[(2.6.8)]{K-S90})
\[
\Hom(\cF\otimes\cG,\DD_X)\simeq\Hom(\cF,\bD\cG)\simeq\Hom(\cG,\bD\cF).
\]
Giving a pairing $\varphi:\cF\otimes\cG\to\DD_X$ amounts thus to giving a morphism $\cF\to\bD\cG$, or as well a morphism $\cG\to\bD\cF$. We say that the pairing $\varphi$ is \emph{perfect} if the corresponding morphism $\cF\to\bD\cG$ (or as well $\cG\to\bD\cF$) is an isomorphism.

A pairing $\varphi$ induces in a natural way a pairing
\[
\bR\Gamma_\rc(X,\cG)\otimes\bR\Gamma(X,\cF)\to\bR\Gamma_\rc(X,\cF\otimes\cG)\to\bR\Gamma_\rc(X,\DD_X)\to\kk
\]
(and a similar pairing by permuting the roles of $\cF$ and $\cG$), and thus, for every $\ell\in\ZZ$, pairings
\[
\varphi^\ell:\bH^{-\ell}_\rc(X,\cG)\otimes\bH^\ell(X,\cF)\to\bH^0_\rc(X,\cF\otimes\cG)\to\bH^0_\rc(X,\DD_X)=\kk.
\]

\begin{cor}\label{cor:accperfect}
If $\varphi$ is a perfect pairing, the pairings $\varphi^\ell$ are also perfect.
\end{cor}

\begin{proof}
By construction, the composition of $\varphi^\ell$ and the isomorphism
\[
\bH^\ell(X,\cF)\isom \bH^\ell(X,\bD\cG)
\]
induced by $\varphi$ is the pairing \eqref{eq:2} for $\cG$.
\end{proof}

\begin{proof}[Proof of Lemma \ref{lem:composes}]
We first show that, for sheaves $\cF$ and $\cG$ of $\kk$-vector spaces, the natural morphisms
\begin{gather}\label{eq:mor1}
f_*\cF\otimes f_*\cHom(\cF,\cG)\to f_*\bigl(\cF\otimes\cHom(\cF,\cG)\bigr)\to f_*\cG,\\
f_*\cF\otimes f_*\cHom(\cF,\cG)\to f_*\cF\otimes\cHom( f_*\cF, f_*\cG)\to f_*\cG\label{eq:mor2}
\end{gather}
coincide. Let us first recall the construction of the natural morphism
\[
\cF \otimes \cHom(\cF,\cG) \to \cG.
\]
We denote by $[\cF\otimes\cHom(\cF,\cG)]^\sim$ the presheaf
\[
U\mto \Gamma(U,\cF)\otimes\Gamma(U,\cHom(\cF,\cG))
\]
whose associated sheaf is $\cF\otimes\cHom(\cF,\cG)$ by definition. Recall that
\[
\Gamma(U,\cHom(\cF,\cG))=\Hom(\cF_{|U},\cG_{|U})
\]
(compatible families of morphisms $\Hom(\Gamma(U',\cF),\Gamma(U',\cG))$ for $U'$ open in $U$), so that there is a forgetful morphism
\[
\Gamma(U,\cHom(\cF,\cG))\to\Hom(\Gamma(U,\cF),\Gamma(U,\cG)).
\]
By composition, we obtain a morphism
\[
[\cF\otimes\cHom(\cF,\cG)]^\sim(U)\to\Gamma(U,\cF)\otimes\Hom(\Gamma(U,\cF),\Gamma(U,\cG))\to\Gamma(U,\cG).
\]
Since $\cG$ is a sheaf, it induces a sheaf morphism $\cF\otimes\cHom(\cF,\cG)\to\cG$. Considering only open sets of the form $f^{-1}(V)$ with $V$ open in $Y$ gives rise to the natural morphism $f_*(\cF\otimes\cHom(\cF,\cG))\to f_*\cG$. But the presheaf
\[
V\mto [\cF\otimes\cHom(\cF,\cG)]^\sim(f^{-1}(V))
\]
is identified with the presheaf giving rise to $f_*\cF\otimes f_*\cHom(\cF,\cG)$, so
\[
f_*(\cF\otimes\cHom(\cF,\cG))=f_*\cF\otimes f_*\cHom(\cF,\cG).
\]
We thus have obtained \eqref{eq:mor1}. Finally, the morphism above reads
\[
[f_*\cF\otimes f_*\cHom(\cF,\cG)]^\sim(V)\to\Gamma(V,f_*\cG),
\]
and its sheafification is nothing but \eqref{eq:mor2}.

By suitably restricting to $f$-proper supports, we obtain the non-derived version of the identification of morphisms in the lemma. In order to get the general case, one replaces $\cG$ with an injective resolution $\cG^\bbullet\in\catD^+(\kk_X)$. It follows that, for $\cF,\cG\in\catD^\rb(\kk_X)$, $\RcHom(\cF,\cG)=\cHom(\cF,\cG^\cbbullet)$ is flabby (\cf\cite[Prop.\,2.4.6(vii)]{K-S90}) and, by considering a flabby resolution $\cF^\cbbullet\in\catD^+(\kk_X)$ of $\cF$, we replace \eg $\cF\otimes\RcHom(\cF,\cG)$ with $\cF^\cbbullet\otimes\cHom(\cF,\cG^\cbbullet)$, so that we can apply the result for sheaves (we~use flabbiness of $\cHom(\cF,\cG^\cbbullet)$ since we cannot consider $\cHom(\cF^\cbbullet,\cG^\cbbullet)$, as $\RcHom$ is not defined on $\catD^+(\kk_X)\times\catD^+(\kk_X)$).
\end{proof}

\begin{example}\label{ex:dualizing}
In the case of a differentiable manifold $M$ with corners as in Section \ref{subsec:settings}, the dualizing complex $\DD_M$ can be represented by $j_!\CC_{M^\circ}[m]$. Proposition~\ref{prop:rrdZcinf} implies that the shifted complex of rapid decay forms $(\cE_M^{\rrd,\bbullet},\partial)[m]$ is a representative of the dualizing complex. Perfectness of $\cV^\rrd\otimes\cV^\rmod\to j_!\CC_{M^\circ}$ as stated in Assumption \ref{hyp:hyp} means perfectness, in the above sense, of the shifted morphism $\varphi:\cV^\rrd[m]\otimes\cV^\rmod\to j_!\CC_{M^\circ}[m]\simeq\DD_M$.
\end{example}

\backmatter
\providecommand{\sortnoop}[1]{}\providecommand{\eprint}[1]{\href{http://arxiv.org/abs/#1}{\texttt{arXiv\string:\allowbreak#1}}}
\providecommand{\bysame}{\leavevmode\hbox to3em{\hrulefill}\thinspace}
\providecommand{\MR}{\relax\ifhmode\unskip\space\fi MR }
\providecommand{\MRhref}[2]{%
  \href{http://www.ams.org/mathscinet-getitem?mr=#1}{#2}
}
\providecommand{\href}[2]{#2}

\end{document}